\newcommand{\GQ}{{\mathds{Q}}}
\newcommand{\GV}{{\mathds{V}}}
\newcommand{\GN}{{\mathds{N}}}
\newcommand{\GR}{{\mathds{R}}}
\newcommand{\GC}{{\mathds{C}}}
\newcommand{\GZ}{{\mathds{Z}}}
\newcommand{\GD}{{\mathds{D}}}
\newcommand{\GI}{{\mathds{I}}}
\newcommand{\bP}{{\mathbb{P}}}
\newcommand{\bE}{{\mathbb{E}}}
\newcommand{\bV}{{\mathbb{V}}}
\newcommand{\ve}{{\varepsilon}}
\newcommand{\cD}{{\mathcal D}}
\newcommand{\cP}{{\mathcal P}}
\newcommand{\cK}{{\mathcal K}}
\newcommand{\conver}{\mathop{\longrightarrow}}
\newcommand{\indist}{\ \conver_{\cD}\ }
\newcommand{\inprob}{\ \conver_{\cP}\ }
\newcommand{\infdd}{\ \conver_{f.d.d.}\ }
\newcommand{\ines}{\ \conver_{S}\ }
\newcommand{\indees}{\ \conver_{{\mathcal{D}(S)}\ }}
\newcommand{\inemone}{\ \conver_{M_1}\ }
\newcommand{\inessh}{\conver_{S}}
\newcommand\I{{ 1\hspace{-1,1mm}{\mathrm I}}}
\newtheorem{thm}{Theorem}[section]
\newtheorem{prop}[thm]{Proposition}
\newtheorem{lem}[thm]{Lemma}
\newtheorem{cor}[thm]{Corollary}
\newtheorem{rem}[thm]{Remark}
\newtheorem{exa}[thm]{Example}
\newtheorem{defn}[thm]{Definition}
\newenvironment*{proof}{\em Proof.}{$\Box$}
\begin{document}

\title{Functional Convergence of Linear Processes \\ with Heavy-Tailed Innovations\thanks{Research of R. M. Balan was supported by a
grant from NSERC of Canada}
}

\author{Raluca M. Balan  \and
        Adam Jakubowski \and Sana Louhichi
}

\date{October 12, 2014}

\maketitle

\begin{abstract}
We study convergence in law of partial sums of linear processes with heavy-tailed innovations. In the case of summable coefficients necessary and sufficient conditions for the finite dimensional convergence  to an  $\alpha$-stable L\'evy Motion are given. The conditions
lead to new, tractable sufficient conditions in the case $\alpha \leq 1$.
In the functional setting we complement the existing results on $M_1$-convergence,
obtained for linear processes with nonnegative coefficients by Avram and Taqqu (1992) and improved by Louhichi and Rio (2011), by proving that in the general setting partial sums of linear processes are convergent on the Skorokhod space equipped with the $S$ topology, introduced by Jakubowski (1997). \\[1mm]
{\em Keywords: } limit theorems, functional convergence, stable processes, linear processes.\\[1mm]
{\em MSC2000:} 60F17, 60G52
\end{abstract}

%60F17=Functional limit theorems; invariance principle
%60G52=Stable processes

\section{Introduction and announcement of results}
\label{intro}
Let $\{Y_j\}_{j\in\GZ}$ be a sequence of independent and identically distributed random variables. By a
{\em linear process built on innovations $\{Y_j\}$} we mean a stochastic process
\begin{equation}
\label{aje1}
 X_i=\sum_{j \in \GZ}c_j Y_{i-j}, \quad i \in \GZ,
\end{equation}
where the constants $\{c_j\}_{j\in\GZ}$ are such that the above series is  $\bP$-a.s. convergent.
Clearly, in non-trivial cases such a process is dependent, stationary  and due to the simple linear structure  many of its distributional characteristics can be easily computed (provided they exist). This refers not only to the expectation or the covariances, but also to more involved quantities, like constants for regularly varying tails (see
e.g. \cite{MS00} for discussion) or mixing coefficients (see e.g. \cite{Dou94} for discussion).

There exists a huge literature devoted to applications of linear processes in
statistical analysis and modeling of time series. We refer to the popular textbook
\cite{BrDa96}  as an excellent introduction to the topic.

Here we would like to stress  only two particular features of linear processes.

First, linear processes provide a natural illustration for phenomena of
{\em local} (or {\em weak}) dependence and  {\em long-range} dependence. The most striking results go back to Davydov (\cite{Dav70}), who obtained a rescaled fractional Brownian motion as a functional weak limit for suitable
normalized partial sums  of  $\{X_i\}$'s.

Another important property of linear processes
is the propagation of big values. Suppose that {\em some} random variable $Y_{j_0}$ takes a big value,
then this big value is propagated along the sequence $X_i$ (everywhere, where $Y_{j_0}$ is taken with a big coefficient $c_{i-j_0}$).
Thus linear processes form the simplest model for phenomena of clustering of big values, what is important
in models of insurance (see e.g. \cite{MS00}).

In the present paper we shall deal with heavy-tailed innovations. More precisely, we shall assume
that the law of $Y_i$ belongs to the domain of strict attraction
of a non-degenerate strictly $\alpha$-stable law $\mu_{\alpha}$,  i.e.
\begin{equation}\label{aje2}
Z_n=\frac{1}{a_n}\sum_{i=1}^{n}Y_i \indist Z,
\end{equation}
 where $Z \sim \mu_{\alpha}$.

Let us observe that by the Skorokhod theorem (\cite{Sko57}) we also have
\begin{equation}\label{aje3}
Z_n(t)=\frac{1}{a_n}\sum_{i=1}^{[nt]}Y_i \indist Z(t),
\end{equation}
where $\{Z(t)\}$ is the stable  L\'evy process with $Z(1) \sim \mu_{\alpha}$,
and the convergence holds on the Skorokhod space
$\GD([0,1])$, equipped with  the Skorokhod $J_1$  topology.

Recall, that if the variance of $Z$ is {\em infinite}, then (\ref{aje2})
implies the existence of  $\alpha \in (0,2)$ such that
\begin{equation}\label{aje4}
 \bP( |Y_j| > x) = x^{-\alpha} h(x), \ x >0,
\end{equation}
where  $h$ is a function that varies slowly at $x=+\infty$, and also
\begin{equation}
\label{aje5}
\lim_{x \to \infty}\frac{\bP(Y_j>x)}{\bP(|Y_j|>x)}=p \quad \mbox{and} \quad
\lim_{x \to \infty}\frac{\bP(Y_j<-x)}{\bP(|Y_j|>x)}=q, \quad p+q = 1.
\end{equation}
The norming constants $a_n$ in (\ref{aje3}) must satisfy
\begin{equation}\label{aje6}
n \bP(|Y_j| > a_n) = \frac{ n h(a_n)}{a_n^{\alpha}} \to C >0,
\end{equation}
hence are necessarily of the form $a_n = n^{1/\alpha} g(n^{1/\alpha})$, where the slowly varying function $g(x)$ is the de Bruijn conjugate of $\big(C/h(x)\big)^{1/\alpha}$ (see \cite{BGT87}).
Moreover, if $\alpha > 1$, then $\bE Y_j = 0$
and if $\alpha = 1$, then $p = q$ in (\ref{aje5}).

Conversely, conditions (\ref{aje4}), (\ref{aje5}) and
\begin{eqnarray} \label{aje7}
\bE \big[Y_j\big] = 0, &\text{ if }& \alpha > 1,\\ \label{aje8}
\{Y_j\} \text{ are symmetric,} &\text{ if }& \alpha = 1,
\end{eqnarray}
imply (\ref{aje3}).

If $a_n$ is chosen to satisfy (\ref{aje6}) with $C=1$, then  $\mu_{\alpha}$ is given by the characteristic function
\begin{equation}\label{aje9}
\hat{\mu}(\theta) = \begin{cases}
\exp\Big( \int_{\GR^1} (e^{i\theta x} - 1) f_{\alpha, p, q} (x) \,dx\Big) & \text{if
$0<\alpha<1$,}\\
\exp\Big( \int_{\GR^1} (e^{i\theta x} - 1) f_{1, 1/2, 1/2} (x) \,dx\Big)
 & \text{if $\alpha=1$,}\\
\exp\Big( \int_{\GR^1} (e^{i\theta x} - 1 - i\theta x) f_{\alpha, p, q} (x) \,dx\Big)
 &\text{if $1<\alpha<2$},
\end{cases}
\end{equation}
where
\[ f_{\alpha, p, q}(x) = \big(p\, \GI(x>0) + q\, \GI(x<0)\big) \alpha |x|^{- (1+\alpha)}. \]
We refer to \cite{Fel71} or any of contemporary monographs on limit theorems for the above basic information.

Suppose that the tails of $|Y_j|$ are regularly varying, i.e.
(\ref{aje4}) holds  for some $\alpha\in (0,2)$, and the (usual) regularity conditions
(\ref{aje7}) and (\ref{aje8}) are satisfied.
It is an observation due to Astrauskas \cite{Ast83} (in fact: a direct consequence of the
Kolmogorov Three Series Theorem - see Proposition \ref{Prop0} below) that the series (\ref{aje1}) defining the linear process
$X_i$ is $\bP$-a.s. convergent if, and only if,
\begin{equation}\label{aje10}
\sum_{j\in\GZ} |c_j|^{\alpha} h(|c_j|^{-1}) < +\infty.
\end{equation}
Given the above series is convergent we can define
\begin{equation}\label{aje11}
S_n(t)=\frac{1}{b_n} \sum_{i=1}^{[nt]}X_i,\ t\geq 0,
\end{equation}
and it is natural to ask for convergence of $S_n$'s, when $b_n$ is suitably chosen.
Astrauskas \cite{Ast83} and Kasahara \& Maejima \cite{KM88} showed that
{\em fractional stable L\'evy Motions} can appear in the limit of $S_n(t)$'s, and that some of the limiting processes can have {\em regular} or even {\em continuous}  trajectories,
while trajectories of other can be {\em unbounded on every interval}.

In the present paper we consider the important case of summable coefficients:
\begin{equation}
\label{ajsummable}
 \sum_{j\in\GZ} |c_j| < +\infty.
\end{equation}

In Section 2 we  give necessary and sufficient conditions for the finite dimensional convergence
\begin{equation}\label{ajfdd}
 S_n(t)=\frac{1}{a_n} \sum_{i=1}^{[nt]}X_i \infdd A\cdot Z(t),
\end{equation}
where the constants $a_n$ are the same as in (\ref{aje2}), $A = \sum_{j\in \GZ} c_j$ and $\{Z(t)\}$ is an $\alpha$-stable L\'evy Motion such that $Z(1) \sim Z$.
The obtained conditions lead to tractable sufficient conditions, which in case $\alpha < 1$ are new and essentially weaker than condition
\[  \sum_{j\in\GZ} |c_j|^{\beta} < +\infty, \quad \text{for some $0 < \beta < \alpha$,} \]
considered in \cite{Ast83}, \cite{DaRe85} and \cite{KM88}. See Section \ref{secsc} for details.
Notice that in the case  $A = 0$ another normalization $b_n$ is possible with a
non-degenerate limit. We refer to \cite{Pau13} for comprehensive analysis of dependence structure of infinite variance processes.

Section \ref{secfc} contains strengthening of (\ref{ajfdd}) to a functional convergence in some suitable topology on the Skorokhod space $\GD([0,1])$. Since the paper \cite{AvTa92}
it is known that in non-trivial cases (when at least two coefficients are non-zero) the convergence in the Skorokhod $J_1$ topology cannot hold. In fact none of Skorokhod's $J_1$, $J_2$, $M_1$ and $M_2$ topologies is applicable. This can be seen by analysis of the following simple example (\cite{AvTa92}, p. 488). Set $c_0 = 1, c_1 = -1$ and $c_i = 0$ if $j\neq 0,1$. Then
 $X_i = Y_i - Y_{i-1}$ and (\ref{ajfdd}) holds with $A = \sum_j c_j = 0$,
i.e.
\[ S_n(t) \inprob 0, \ t\geq 0.\]
But we see that
\[ \sup_{t\in[0,1]} S_n(t) = \max_{k\leq n} \big(Y_k - Y_0\big)/a_n\]
 converges in law to a Fr\'echet distribution. This means that {\em supremum} is not a continuous (or almost surely continuous) functional, what excludes convergence in Skorokhod's topologies in the {\em general} case.

For linear processes with {\em nonnegative} coefficients $c_i$ partial results were obtained by Avram and Taqqu \cite{AvTa92}, where convergence in the $M_1$ topology was considered. Recently these results have been improved and developed in various directions in \cite{LoRi11}
and \cite{BKS12}. We use the linear structure of processes and the established convergence in the $M_1$ topology to show that in the general case, the finite dimensional convergence  (\ref{ajfdd}) can be strengthen to convergence in the so-called $S$ topology, introduced in \cite{J97-EJP}. This is a sequential and non-metric, but fully operational topology, for which addition is {\em sequentially} continuous.

Section \ref{seccomp} is devoted to some consequences of results obtained in previous sections. We provide examples of functionals continuous in the $S$ topology. In particular we show that for every $\gamma >0$
\[ \frac{1}{n a_n^{\gamma}} \sum_{k=1}^n \Big( \sum_{i=1}^k
\big(\sum_j c_{i-j} Y_j\big) - A Y_i\Big)^{\gamma} \inprob 0.\]
We also discuss possible extensions of the theory to linear sequences built on {\em dependent} summands.

The Appendix contains technical results of independent interest. \\[2mm]

{\bf Conventions and notations.}\ Throughout the paper, in order to avoid permanent repetition of
standard assumptions and conditions we adopt the following conventions. We will say that $\{Y_j\}$'s satisfy {\em the usual conditions}
if  they are {\em independent identically distributed} and (\ref{aje4}), (\ref{aje5}), (\ref{aje7}) and
(\ref{aje8}) hold. When we write $X_i$ it is always the linear process given by (\ref{aje1}) and is well-defined, i.e. satisfies  (\ref{aje10}). Similarly the norming constants $\{a_n\}$ are defined by (\ref{aje6}) and the normalized partial sums $S_n(t)$ and $Z_n(t)$ are
given by (\ref{aje11}) with $b_n = a_n$ and (\ref{aje3}), respectively, where $Z$ is the limit in (\ref{aje2}) and $Z(t)$ is the stable L\'evy Motion such that $Z(1) \sim Z$.

\section{Convergence of finite dimensional distributions for summable coefficients}

We begin with stating the main result of this section followed by its important consequence.
\begin{thm}\label{Th1}
Let $\{Y_j\}$ be an i.i.d. sequence satisfying the usual conditions.
Suppose that
\[ \sum_j |c_j| < +\infty.\]
Then
\[
 S_n(t)=\frac{1}{a_n} \sum_{i=1}^{[nt]}X_i \infdd A\cdot Z(t),\ \text{ where $A = \sum_j c_j$},
\]
if, and only if,
\begin{equation}\label{ajcondfdd}
\begin{split}
\sum_{j=-\infty}^0 \frac{\big|d_{n,j}\big|^{\alpha}}{a_n^{\alpha}}
h\Big( \frac{a_n}{\big|d_{n,j}\big|}\Big) &\to 0,\ \text{ as $n\to\infty$,}\\
\sum_{j=n+ 1}^{\infty} \frac{\big|d_{n,j}\big|^{\alpha}}{a_n^{\alpha}}
h\Big( \frac{a_n}{\big|d_{n,j}\big|}\Big) &\to 0,\ \text{ as $n\to\infty$,}
\end{split}
\end{equation}
where
\[ d_{n,j} = \sum_{k=1-j}^{n-j}c_k, \quad n\in\GN, j\in \GZ.\]
\end{thm}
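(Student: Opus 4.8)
The plan is to exploit the linear structure to rewrite the partial sum as a single weighted sum of the innovations, and then to separate an \emph{interior} part, which always converges to the correct limit, from a \emph{boundary} part, whose asymptotic negligibility is governed precisely by (\ref{ajcondfdd}). First I would interchange the order of summation: for $t\in[0,1]$, putting $l=i-j$,
\[ \sum_{i=1}^{[nt]} X_i = \sum_{i=1}^{[nt]}\sum_{j\in\GZ} c_j Y_{i-j} = \sum_{l\in\GZ}\Big(\sum_{k=1-l}^{[nt]-l} c_k\Big) Y_l, \]
so that for $t=1$ the coefficient of $Y_l$ is exactly $d_{n,l}$. This identifies $S_n(1)$ with $\frac{1}{a_n}\sum_l d_{n,l}Y_l$, a triangular weighted sum of i.i.d.\ regularly varying variables, and splits the index set into the interior $1\le l\le n$ and the boundary $\{l\le 0\}\cup\{l\ge n+1\}$, the two ranges appearing in (\ref{ajcondfdd}). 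Since these index sets are disjoint, the corresponding sub-sums are independent, a fact I will use for the necessity part.

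The workhorse is a negligibility criterion for weighted sums, which I expect to isolate as a lemma (in the spirit of Proposition \ref{Prop0} and the Three Series Theorem): for a triangular array of weights $\{w_{n,l}\}$ and innovations satisfying the usual conditions,
\[ \frac{1}{a_n}\sum_{l} w_{n,l}\,Y_l \inprob 0 \quad\Longleftrightarrow\quad \sum_{l}\frac{|w_{n,l}|^{\alpha}}{a_n^{\alpha}}\,h\Big(\frac{a_n}{|w_{n,l}|}\Big)\to 0, \]
with the usual symmetric, respectively centered, truncation when $\alpha=1$ and $1<\alpha<2$. The sufficiency implication follows by truncating at level $\ve a_n$, bounding the truncated sum in $L^2$ and the probability $\bP(\max_l|w_{n,l}Y_l|>\ve a_n)$ by the displayed tail-sum via (\ref{aje4})--(\ref{aje5}) and slow variation; the necessity implication, needed below, holds because a heavy-tailed weighted sum is dominated by its single largest term, so convergence to $0$ forces $\bP(\max_l|w_{n,l}Y_l|>\ve a_n)\to 0$ and hence the tail-sum to vanish.

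For the interior I would write $d_{n,l}=A-r_{n,l}$ with $r_{n,l}=\sum_{k\le -l}c_k+\sum_{k\ge n-l+1}c_k$, giving
\[ \frac{1}{a_n}\sum_{l=1}^{n} d_{n,l}\,Y_l = \frac{A}{a_n}\sum_{i=1}^{n} Y_i \;-\; \frac{1}{a_n}\sum_{l=1}^{n} r_{n,l}\,Y_l. \]
The first term converges to $A\cdot Z$ by (\ref{aje3}). For the second, absolute convergence of $\sum_k c_k$ implies that $r_{n,l}\to 0$ for all interior indices except those in fixed-size neighbourhoods of the two endpoints $l\approx 1$ and $l\approx n$; for each $\ve$ the exceptional indices are finite in number and contribute $O(a_n^{-\alpha})\to 0$ to the tail-sum, while the remaining indices contribute $o(1)$ by a Cesàro estimate. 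Hence $\frac{1}{a_n^{\alpha}}\sum_{l=1}^n|r_{n,l}|^{\alpha}h(a_n/|r_{n,l}|)\to 0$ using only (\ref{ajsummable}), and by the lemma the second term is negligible. Thus the interior converges to $A\cdot Z$ \emph{unconditionally}, which explains why the middle range does not enter (\ref{ajcondfdd}).

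It then remains to treat the boundary $B_n=\frac{1}{a_n}\big(\sum_{l\le 0}+\sum_{l\ge n+1}\big)d_{n,l}Y_l$. By the lemma, $B_n\inprob 0$ if and only if (\ref{ajcondfdd}) holds. For sufficiency, combining this with the interior yields $S_n(1)\indist A\cdot Z$; for necessity, the independence of $B_n$ and the interior, together with the fact that the stable characteristic function in (\ref{aje9}) never vanishes, forces $\bE e^{i\theta B_n}\to 1$, i.e.\ $B_n\inprob 0$, whence (\ref{ajcondfdd}). The full finite dimensional statement follows by running the same interior/boundary decomposition on the increments $\sum_{i=[nt_{r-1}]+1}^{[nt_r]}X_i$ and using the independent-increments structure of $\{Z(t)\}$, the intermediate boundary contributions being again controlled by (\ref{ajsummable}) and the $t=1$ condition (\ref{ajcondfdd}). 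I expect the main obstacle to be establishing the negligibility lemma as a genuine equivalence, in particular its necessity half and the centering bookkeeping for $\alpha=1$ and $1<\alpha<2$, since this is exactly what makes (\ref{ajcondfdd}) both necessary and sufficient.
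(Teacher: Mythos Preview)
Your proposal is correct and follows essentially the same route as the paper: the same interchange of summation to get $S_n(t)=\sum_j (d_{[nt],j}/a_n)Y_j$, the same interior/boundary split, the same unconditional negligibility of the interior remainder $A\cdot Z_n(t)-S_n^0(t)$, the same independence plus non-vanishing stable characteristic function for necessity, and the key equivalence lemma for weighted sums (which is exactly Proposition~\ref{Prop1} in the Appendix). One small streamlining in the paper you may want to adopt: rather than re-running the decomposition on each increment, the paper observes that $S_n(1)\indist A\cdot Z(1)$ together with the regular variation of $a_n$ already gives $S_n(t)-A\cdot Z_n(t)\inprob 0$ for \emph{every} $t\in[0,1]$ (Lemma~\ref{L3}), from which the full f.d.d.\ convergence is immediate; also, the ``fixed-size neighbourhoods'' in your interior estimate should in fact grow slowly ($j_n\to\infty$, $j_n=o(n)$), as in Lemma~\ref{Lemsimple}.
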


\begin{cor}\label{corext}
Under the assumptions of Theorem \ref{Th1}, define
\begin{equation}\label{eqcece}
U_i =  \sum_j |c_{i-j}| Y_j,\quad X_i^+ = \sum_j c_{i-j}^+ Y_j,\quad
X_i^- =  \sum_j c_{i-j}^- Y_j,
\end{equation}
where $c^+= c\vee 0,\ c^- = (-c) \vee 0,\ c\in\GR^1$, and  set
\begin{equation}\label{eqtete}
T_n(t) =  \frac{1}{a_n} \sum_{i=1}^{[nt]} U_i, \quad T_n^+(t) = \frac{1}{a_n} \sum_{i=1}^{[nt]} X_i^+,\quad
 T_n^-(t) = \frac{1}{a_n} \sum_{i=1}^{[nt]} X_i^-.
\end{equation}
Then
\begin{equation*}%\label{eqextra1}
 T_n(t) \infdd A_{|\cdot|}\cdot Z(t),\ \text{ where $A_{|\cdot|} = \sum_j |c_j|$},
\end{equation*}
implies
\begin{align*}
T_n^+(t) \infdd A_{+}\cdot Z(t),&\ \text{ where $A_{+} = \sum_j c_j^+$},\\
T_n^-(t)\infdd A_{-}\cdot Z(t),&\ \text{ where $A_{-} = \sum_j c_j^-$},\\
S_n(t) = T_n^+(t) - T_n^-(t)\infdd A \cdot Z(t),&\ \text{ where $A = \sum_j c_j$}.
\end{align*}
\end{cor}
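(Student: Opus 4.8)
The plan is to deduce all three convergences directly from Theorem \ref{Th1}, each time by verifying condition (\ref{ajcondfdd}) for the relevant coefficients, rather than trying to subtract the limits of $T_n^+$ and $T_n^-$ (which would illegitimately require their \emph{joint} convergence to multiples of a common $Z$). The key observation is that the generic summand in (\ref{ajcondfdd}) is simply a tail probability: by (\ref{aje4}),
\[
\frac{|d_{n,j}|^{\alpha}}{a_n^{\alpha}}\, h\Big(\frac{a_n}{|d_{n,j}|}\Big) = \bP\Big(|Y_j| > \frac{a_n}{|d_{n,j}|}\Big).
\]
As $x\mapsto \bP(|Y_j|>x)$ is non-increasing, this is a non-decreasing function of $|d_{n,j}|$. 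The point worth stressing is that the monotonicity belongs to the full product $x^{-\alpha}h(x)$, being a survival function, and survives even though the slowly varying factor $h$ carries no monotonicity of its own.

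Next I would record the elementary partial-sum inequalities that drive the comparison. Let $d_{n,j}^{+}$, $d_{n,j}^{-}$ and $d_{n,j}^{|\cdot|}$ denote the quantities $d_{n,j}$ formed from the coefficients $c_k^{+}$, $c_k^{-}$ and $|c_k|$, respectively. Since $c_k^{+},c_k^{-}\geq 0$ and $|c_k| = c_k^{+}+c_k^{-}$, one has, for every $n$ and $j$,
\[
0 \le d_{n,j}^{\pm} \le d_{n,j}^{|\cdot|}, \qquad |d_{n,j}| = \big|d_{n,j}^{+}-d_{n,j}^{-}\big| \le d_{n,j}^{+}+d_{n,j}^{-} = d_{n,j}^{|\cdot|}.
\]
Combined with the monotonicity above, this shows that every summand in (\ref{ajcondfdd}) built from $c^{+}$, from $c^{-}$, or from $c$ is dominated termwise by the corresponding summand built from $|c|$.

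With these ingredients the proof closes quickly. First, each auxiliary process is well defined: $U_i$ shares condition (\ref{aje10}) with $X_i$ (that condition only sees $|c_j|^{\alpha}$), while $c_j^{\pm}\le |c_j|$ and the same tail monotonicity give $\sum_j (c_j^{\pm})^{\alpha}h\big((c_j^{\pm})^{-1}\big) \le \sum_j |c_j|^{\alpha}h\big(|c_j|^{-1}\big) < \infty$. Now the hypothesis $T_n(t)\infdd A_{|\cdot|}\cdot Z(t)$, read through the ``only if'' direction of Theorem \ref{Th1} for the process with coefficients $|c_k|$, yields (\ref{ajcondfdd}) for $d_{n,j}^{|\cdot|}$; the termwise domination then transfers (\ref{ajcondfdd}) to $d_{n,j}^{+}$, to $d_{n,j}^{-}$, and to $d_{n,j}$. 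Feeding these into the ``if'' direction of Theorem \ref{Th1}, applied to the processes with coefficients $c_k^{+}$, $c_k^{-}$ and $c_k$, delivers the limits $A_{+}\cdot Z$, $A_{-}\cdot Z$ and $A\cdot Z$. The last convergence is obtained directly in this way; the identity $X_i = X_i^{+}-X_i^{-}$ is used only to record $S_n = T_n^{+}-T_n^{-}$, not to difference the limits.

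I expect no genuine obstacle here: the sole step requiring care is recognizing the summand as a monotone function of $|d_{n,j}|$ via its reading as a tail probability, since $h$ by itself supplies no monotonicity. Everything else is bookkeeping around the two directions of Theorem \ref{Th1}.
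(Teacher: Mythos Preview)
Your proposal is correct and follows essentially the same approach as the paper: the paper's proof is the one-line observation
\[
\frac{|d_{n,j}|^{\alpha}}{a_n^{\alpha}}\, h\Big(\frac{a_n}{|d_{n,j}|}\Big) = \bP\Big(\Big|\sum_{k=1-j}^{n-j}c_k\Big|\cdot|Y_j|>a_n\Big) \le \bP\Big(\Big(\sum_{k=1-j}^{n-j}|c_k|\Big)\cdot|Y_j|>a_n\Big),
\]
which is exactly your tail-probability reading plus monotonicity, followed by an appeal to both directions of Theorem~\ref{Th1}. Your version is simply more explicit about the bookkeeping (well-definedness of the auxiliary processes, the inequalities $|d_{n,j}|,\,d_{n,j}^{\pm}\le d_{n,j}^{|\cdot|}$), but the key idea is identical.
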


\noindent{\em Proof of Corollary \ref{corext}.}\  \ In view of Theorem \ref{Th1} it is enough to notice that
 \[\frac{\big|d_{n,j}\big|^{\alpha}}{a_n^{\alpha}}
h\Big( \frac{a_n}{\big|d_{n,j}\big|}\Big) = \bP\Big( \Big|  \sum_{k=1-j}^{n-j}c_k\Big| \cdot |Y_j | > a_n \Big) \leq \bP\Big( \Big( \sum_{k=1-j}^{n-j}|c_k|\Big) \cdot |Y_j | > a_n \Big).\]
\mbox{}\\[3mm]

\noindent{\em Proof of Theorem \ref{Th1}.}\ \
Using Fubini's theorem, we obtain that
\begin{equation}\label{keyform}
S_n(t)=\frac{1}{a_n}\sum_{i=1}^{[nt]}\sum_{j \in \GZ}c_{i-j}Y_j
=\sum_{j\in\GZ} \frac{1}{a_n}\left(\sum_{k=1-j}^{[nt]-j}c_k\right) Y_j =\sum_{j \in \GZ} \frac{1}{a_n} d_{[nt],j} Y_j.
\end{equation}
Further, we may decompose
\begin{equation}\label{keyformplus}
\begin{split}
\sum_{j\in\GZ} \frac{1}{a_n}d_{[nt],j} Y_j = &
\sum_{j=-\infty}^0 \frac{1}{a_n}d_{[nt],j} Y_j \\
&+
\sum_{j=1}^{[nt]} \frac{1}{a_n}d_{[nt],j} Y_j \\
&+ \sum_{j=[nt]+ 1}^{\infty}  \frac{1}{a_n}d_{[nt],j} Y_j \\
= & S_n^-(t) + S_n^0(t) + S_n^+(t).
\end{split}
\end{equation}

Let us consider the partial sum process:
\[Z_n(t)=\frac{1}{a_n}\sum_{i=1}^{[nt]}Y_i,\ t \geq 0.\]
First we will show
\begin{lem}\label{L2} Under the assumptions of Theorem \ref{Th1} we have for each $t > 0$
\begin{equation}\label{ajekey1}
S_n^0(t) - A \cdot Z_n(t) \inprob 0.
\end{equation}
In particular,
 \begin{equation}\label{ajekey2}
S_n^0(t)  \indist A \cdot Z(t).
\end{equation}
\end{lem}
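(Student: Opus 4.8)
The plan is to turn the claim into a statement about a triangular array of independent summands and then settle it by truncation, with the summability of $\{c_j\}$ supplying the needed smallness of the coefficients. Since $A\cdot Z_n(t)=\frac{1}{a_n}\sum_{j=1}^{[nt]}A\,Y_j$ while $S_n^0(t)=\frac{1}{a_n}\sum_{j=1}^{[nt]}d_{[nt],j}Y_j$ by (\ref{keyformplus}), I would first rewrite
\[
 S_n^0(t)-A\cdot Z_n(t)=\sum_{j=1}^{[nt]}V_{n,j},\qquad V_{n,j}:=\frac{1}{a_n}\big(d_{[nt],j}-A\big)Y_j,\quad \beta_{n,j}:=\frac{|d_{[nt],j}-A|}{a_n},
\]
and record that $A-d_{[nt],j}=\sum_{k\le -j}c_k+\sum_{k\ge [nt]-j+1}c_k$ is a pair of tails of the series $\sum_k c_k$. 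Setting $r_m=\sum_{|k|>m}|c_k|\to 0$, this yields the bulk bound $|d_{[nt],j}-A|\le 2r_m$ for every index $m<j<[nt]-m$, while for the at most $2m$ edge indices one keeps only the crude estimate $|d_{[nt],j}-A|\le 2\sum_k|c_k|$.

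The technical core is the tail estimate
\[
 \Lambda_n:=\sum_{j=1}^{[nt]}\bP\big(|d_{[nt],j}-A|\,|Y_j|>\ve a_n\big)\longrightarrow 0,\qquad \ve>0,
\]
which I would prove via the bulk/edge split. On the bulk, $\bP(|d_{[nt],j}-A|\,|Y_j|>\ve a_n)\le \bP(|Y_j|>\ve a_n/(2r_m))$, so the bulk contribution is at most $[nt]\,\bP(|Y_j|>\ve a_n/(2r_m))$; by (\ref{aje6}) and the uniform convergence theorem for regularly varying tails this tends to $t\,C\,(2r_m/\ve)^{\alpha}$, which vanishes as $m\to\infty$. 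Each of the finitely many edge terms tends to $0$ because $a_n\to\infty$ forces $\bP(|Y_j|>a_n/(2\sum_k|c_k|))\to 0$, so for fixed $m$ the edge contribution vanishes as $n\to\infty$; letting $m\to\infty$ afterwards gives $\Lambda_n\to0$.

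Granted $\Lambda_n\to0$, I would finish by truncating the independent array $\{V_{n,j}\}_{1\le j\le[nt]}$. The estimate above already controls $\sum_j\bP(|V_{n,j}|>\ve)$, and since $1/\beta_{n,j}\ge a_n/(2\sum_k|c_k|)\to\infty$ uniformly in $j$, Potter/Karamata bounds (valid because $\alpha<2$) dominate both $\sum_j\bE\big[(V_{n,j}\I(|V_{n,j}|\le1))^2\big]$ and the truncated first moments by a fixed constant multiple of $\sum_j\bP(|Y_j|>a_n/|d_{[nt],j}-A|)$, hence by $\Lambda_n\to0$. Centering is handled per regime: for $\alpha>1$ one uses $\bE Y_j=0$ from (\ref{aje7}) together with the $L^2$ bound, for $\alpha=1$ the symmetry (\ref{aje8}) annihilates the truncated mean, and for $\alpha<1$ a pure $L^1$ truncation suffices. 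This gives $\sum_jV_{n,j}\inprob 0$, i.e. (\ref{ajekey1}); then (\ref{ajekey2}) follows from (\ref{aje3}), which yields $A\cdot Z_n(t)\indist A\cdot Z(t)$, combined with (\ref{ajekey1}) by Slutsky's lemma. I expect the main obstacle to be the uniform regular-variation bookkeeping in the tail, variance, and mean estimates, together with the simultaneous treatment of the three ranges of $\alpha$, and in particular verifying that the maximally deviating edge terms remain negligible.
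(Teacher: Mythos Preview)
Your proof is correct and follows essentially the same strategy as the paper: reduce to showing $\Lambda_n\to 0$ via a bulk/edge split on the coefficients $|A-d_{[nt],j}|$, then deduce convergence in probability by a Three-Series-type truncation argument. The only cosmetic differences are that the paper packages the truncation step into a separate result (Proposition~\ref{Prop1}) rather than arguing it inline, and handles the edge via a growing cutoff $j_n\to\infty$ chosen by an auxiliary lemma (Lemma~\ref{Lemsimple}) in place of your fixed-$m$-then-$m\to\infty$ double limit.
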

\noindent {\em Proof of Lemma \ref{L2}} Define
\begin{equation}\label{ajl2e1}
 V_n^0 =
\sum_{j=1}^{[nt]} \frac{\big(A - d_{[nt],j}\big)}{a_n} Y_j = A \cdot Z_n(t) - S_n^0(t).
\end{equation}
To prove that $V^0_n \inprob 0$ we apply Proposition \ref{Prop1}. We have to show that
\begin{equation}\label{ajekey3}
 \begin{split}
\sum_{j=1}^{[nt]} &\frac{\big| A - d_{[nt],j}\big|^{\alpha}}{a_n^{\alpha}} h\Big(  \frac{a_n}{\big| A - d_{[nt],j}\big|}\Big) \\
&\hspace{2cm} =  \sum_{j=1}^{[nt]} \bP\big( \big| A - d_{[nt],j}\big|\cdot |Y_j| > a_n\Big) \to 0,
\ \text{ as $n\to\infty$}.
\end{split}
\end{equation}
Since $a_n \to \infty$ and $\big| A - d_{[nt],j}\big| \leq \sum_{k\in\GZ} |c_k|$, we have
\begin{equation}
\label{eqmax}
 \max_{1\leq j \leq [nt]} \bP\big( \big| A - d_{[nt],j}\big|\cdot |Y_j| > a_n\big) \to 0.
\end{equation}
We need a simple lemma.
\begin{lem}\label{Lemsimple}
Let $\{a_{n,j}\,;\, 1 \leq j \leq n, \ n\in\GN\}$ be an array of numbers such that
\[ \max_{1\leq j\leq n} |a_{n,j}| \to 0, \ \text{ as $n\to\infty$.}
\]
Then there exists a sequence $j_n\to\infty$, $j_n = o(n)$, such that
\[ \sum_{j=1}^{j_n} |a_{n,j}| \to 0.\]
\end{lem}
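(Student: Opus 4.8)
The plan is to bound the partial sum by its crudest estimate and then let the truncation level $j_n$ diverge as slowly as necessary. Writing $M_n := \max_{1\le j\le n}|a_{n,j}|$, the hypothesis is exactly $M_n \to 0$, and since every term is dominated by $M_n$ we have the elementary bound
\[
\sum_{j=1}^{j_n} |a_{n,j}| \;\le\; j_n\, M_n .
\]
Thus it suffices to exhibit a sequence $j_n$ satisfying the three requirements $j_n\to\infty$, $j_n = o(n)$, and $j_n M_n \to 0$; the last of these forces the partial sum to $0$ through the displayed inequality.

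For the construction I would set
\[
j_n \;=\; \min\!\Big( \lfloor \sqrt{n}\rfloor,\ \lfloor M_n^{-1/2}\rfloor \Big),
\]
with the convention that the second entry equals $+\infty$ whenever $M_n = 0$ (in which case the partial sum is trivially $0$). The first factor $\lfloor\sqrt n\rfloor$ is a fixed, slowly growing sequence that guarantees $j_n \le \sqrt n = o(n)$, while the second factor guarantees $j_n M_n \le M_n^{-1/2} M_n = M_n^{1/2}\to 0$. Finally, because $M_n\to 0$ we have $M_n^{-1/2}\to\infty$, and $\lfloor\sqrt n\rfloor\to\infty$ as well, so the minimum of these two diverging quantities also tends to infinity; hence $j_n\to\infty$.

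The only point requiring any care — and what I would flag as the one genuine obstacle — is that the rate at which $M_n\to 0$ is completely unknown, so the \emph{natural} choice $j_n=\lfloor M_n^{-1/2}\rfloor$ on its own could grow faster than $n$ and violate $j_n=o(n)$. Intersecting it with an independently fixed, slowly growing cut-off such as $\lfloor\sqrt n\rfloor$ resolves this by simultaneously enforcing the $o(n)$ constraint without destroying either $j_n\to\infty$ or $j_nM_n\to0$. Any fixed sequence tending to infinity and $o(n)$ would serve in place of $\sqrt n$, and the exponent $1/2$ in $M_n^{-1/2}$ is likewise arbitrary, chosen merely so that both $j_nM_n\to0$ and $M_n^{-1/2}\to\infty$ hold at once.
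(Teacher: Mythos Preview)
Your argument is correct. Both your proof and the paper's rely on the same crude estimate $\sum_{j=1}^{j_n}|a_{n,j}|\le j_n M_n$, but the constructions differ: the paper builds $j_n$ by a diagonal procedure, choosing thresholds $N_m>\max\{N_{m-1},m^2\}$ so that $\sum_{j=1}^m|a_{n,j}|<1/m$ for $n\ge N_m$ and then setting $j_n=m$ on $[N_m,N_{m+1})$, whereas you write down the explicit formula $j_n=\min(\lfloor\sqrt n\rfloor,\lfloor M_n^{-1/2}\rfloor)$. Your version is shorter and gives a concrete sequence; the paper's step-function construction is the classical ``let the level climb slowly enough'' device and would adapt more readily if one only knew $\sum_{j=1}^m|a_{n,j}|\to 0$ for each fixed $m$ without the uniform bound $mM_n$. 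A trivial cosmetic point: for small $n$ your formula may give $j_n=0$ if $M_n>1$, but then the sum is empty and the asymptotic conclusions are unaffected.
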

\noindent {\em Proof of Lemma \ref{Lemsimple}}
For each $m\in\GN$ there exists $N_m > \max\{N_{m-1}, m^2\}$ such that for $n \geq N_m$
\[ \sum_{j=1}^{m} |a_{n,j}| < \frac{1}{m}.\]
Set  $j_n = m$, if $N_m \leq n < N_{m+1}$.
By the very definition, if $ N_m \leq n < N_{m+1}$ then
\[ \sum_{j=1}^{j_n} |a_{n,j}| < \frac{1}{m}\qquad \text{  and  }\qquad \frac{j_n}{n} \leq \frac{j_n}{N_m} = \frac{m}{N_m} \leq \frac{m}{m^2} = \frac{1}{m}.\]

By the above lemma and (\ref{eqmax}) we can find a sequence $j_n \to \infty$, $j_n = o(n)$, increasing so slowly that still
\[
\sum_{j=1}^{j_n} \bP\big( \big| A - d_{[nt],j}\big|\cdot |Y_j| > a_n\big)   + \sum_{j=[nt]-j_n + 1}^{[nt]} \bP\big( \big| A - d_{[nt],j}\big|\cdot |Y_j| > a_n\big) \to 0.
\]
For the remaining part we have
\[ \max_{j_n < j \leq [nt]- j_n} \big| A - d_{[nt],j}\big| =  \max_{j_n < j \leq [nt]- j_n} \big| A - \sum_{k=1-j}^{[nt]-j}c_k\big| = \delta_n \to 0,\]
hence for $\delta \geq \delta_n$
\[\begin{split}
\sum_{j=j_n+1}^{[nt] - j_n} \bP\big( \big| A - d_{[nt],j}\big|\cdot |Y_j| > a_n\big) &\leq \sum_{j=j_n+1}^{[nt] - j_n} \bP\Big( |\delta_n| |Y_j| > a_n\Big) \\
& \leq
\sum_{j=1}^{[nt]}\bP\Big( |\delta_n| |Y_j| > a_n\Big) \\
& \leq [nt] \frac{\delta^{\alpha}}{a_n^{\alpha}} h(a_n/\delta) \\
&= [nt] \delta^{\alpha}\frac{h(a_n)}{a_n^{\alpha}}
\frac{h(a_n/\delta)}{h(a_n)}.
\end{split}
\]
Since  $ n a_n^{-\alpha} h(a_n) = n \bP(|Y| > a_n) \to 1$ and $h$ varies slowly we have
\[ [nt] \delta^{\alpha}\frac{h(a_n)}{a_n^{\alpha}}
\frac{h(a_n/\delta)}{h(a_n)} \sim [nt] \delta^{\alpha} \frac{1}{n} \to t \delta^{\alpha},
 \text{ as $n\to\infty$}.\]
But $\delta > 0$ is arbitrary, hence we have proved (\ref{ajekey3}) and
\[  V_n^0 = A\cdot Z_n(t) - S_n^0(t) \inprob 0.\]
Since
\[ A \cdot Z_n(t) \indist A \cdot Z(t),\]
Lemma \ref{L2} follows.

In the next step we shall prove
\begin{lem}\label{L3} Under the assumptions of Theorem \ref{Th1} the following items
(i)-(iii) are equivalent.
\begin{description}
\item{\bf (i)}
\begin{equation}\label{ajeonedimone}
 S_n(1) \indist A\cdot Z(1),
\end{equation}
\item{\bf(ii)}
\begin{equation}\label{ajcor2e1}
 S_n^{-}(1) +  S_n^{+}(1) \inprob 0.
\end{equation}
\item{\bf(iii)} For every $t\in [0,1]$
\begin{equation}\label{ajeonedim}
 S_n(t) - A\cdot Z_n(t) \inprob 0.
\end{equation}
\end{description}
\end{lem}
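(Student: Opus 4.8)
The plan is to prove the three items equivalent by establishing the cycle (iii)$\Rightarrow$(i)$\Rightarrow$(ii)$\Rightarrow$(iii). Two structural facts drive everything. First, the blocks $S_n^-(1)$, $S_n^0(1)$, $S_n^+(1)$ are built from the pairwise disjoint families of innovations $\{Y_j: j\le 0\}$, $\{Y_j: 1\le j\le n\}$ and $\{Y_j: j\ge n+1\}$, so they are mutually independent. Second, Lemma \ref{L2} already supplies $S_n^0(1)\indist A\cdot Z(1)$ together with $S_n^0(t)-A\cdot Z_n(t)\inprob 0$ for every $t>0$. Throughout I abbreviate $R_n(t):=S_n^-(t)+S_n^+(t)$, so that (ii) reads simply $R_n(1)\inprob 0$.

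The two implications not involving the tails are quick. For (iii)$\Rightarrow$(i) I would specialize (iii) at $t=1$ to get $S_n(1)-A\cdot Z_n(1)\inprob 0$, and then invoke $A\cdot Z_n(1)\indist A\cdot Z(1)$ from (\ref{aje3}) together with Slutsky's lemma. For (i)$\Rightarrow$(ii) I would write $S_n(1)=S_n^0(1)+R_n(1)$ with the two summands independent, so that their characteristic functions multiply; by (i) the characteristic function of $S_n(1)$ converges to that of $A\cdot Z(1)$, and by Lemma \ref{L2} so does the one of $S_n^0(1)$. Since a stable characteristic function is the exponential of (\ref{aje9}) and hence never vanishes, I may divide for $n$ large and conclude that the characteristic function of $R_n(1)$ tends to $1$ pointwise; convergence in law to the constant $0$ is the same as $R_n(1)\inprob 0$, which is (ii).

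The only substantial step is (ii)$\Rightarrow$(iii), and here the key is a self-similarity identity. Because the coefficients $d_{[nt],j}$ and the index set $\{j\le 0\}\cup\{j\ge [nt]+1\}$ entering $R_n(t)$ are exactly those defining $R_{[nt]}(1)$, the two random variables differ only through the normalizing constant, giving the exact equality
\[ R_n(t)=\frac{a_{[nt]}}{a_n}\,R_{[nt]}(1). \]
From (ii) we have $R_m(1)\inprob 0$ as $m\to\infty$; reading this along the subsequence $m=[nt]\to\infty$ yields $R_{[nt]}(1)\inprob 0$. As $a_n=n^{1/\alpha}g(n^{1/\alpha})$ is regularly varying of index $1/\alpha$, the prefactor $a_{[nt]}/a_n\to t^{1/\alpha}$ stays bounded, so $R_n(t)\inprob 0$. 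Writing $S_n(t)-A\cdot Z_n(t)=\big(S_n^0(t)-A\cdot Z_n(t)\big)+R_n(t)$ and using Lemma \ref{L2} for the first bracket then gives (iii) for every $t\in[0,1]$ (the case $t=0$ being trivial).

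I expect (ii)$\Rightarrow$(iii) to be the main obstacle, or rather the place where the right idea must be spotted. The naive route would estimate, for each fixed $t$, the tail sums $\sum_{j\le 0\text{ or }j>[nt]}\bP(|d_{[nt],j}|\,|Y_j|>\varepsilon a_n)$ by their $t=1$ counterparts, but under the sole hypothesis $\sum_j|c_j|<\infty$ (with $\sum_j|c_j|^{\alpha}$ possibly infinite when $\alpha$ is small) no termwise domination is available. The self-similarity identity sidesteps these estimates entirely, reducing the claim at an arbitrary $t$ to the already-granted case $t=1$ together with the regular variation of $a_n$.
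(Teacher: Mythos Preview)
Your proof is correct and uses the same ingredients as the paper (Lemma \ref{L2}, independence of the three blocks, the non-vanishing of the stable characteristic function, and regular variation of $a_n$), but organized along a different cycle: the paper proves (iii)$\Rightarrow$(ii)$\Rightarrow$(i)$\Rightarrow$(iii), whereas you prove (iii)$\Rightarrow$(i)$\Rightarrow$(ii)$\Rightarrow$(iii). The substantive difference lies in the last link. Starting from (i), the paper first deduces $S_n(t)\indist A\cdot Z(t)$ for each $t$ via the scaling identity $S_n(t)=(a_{[nt]}/a_n)\,S_{[nt]}(1)$, and then repeats the characteristic-function division argument at every $t$ to extract $S_n^-(t)+S_n^+(t)\inprob 0$. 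You instead apply the same scaling identity directly to the tail piece, obtaining $R_n(t)=(a_{[nt]}/a_n)\,R_{[nt]}(1)$, so that (ii) at time $1$ transfers immediately to all $t$ without a second appeal to characteristic functions. This buys a small economy; the paper's route buys the intermediate statement that one-dimensional convergence at $t=1$ already forces one-dimensional convergence at every $t$, which is then reused for the passage to finite-dimensional convergence.
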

\noindent {\em Proof of Lemma \ref{L3}}
By Lemma \ref{L2} we know that $S_n^0(1) - A\cdot Z_n(1) \inprob 0$ and
$S_n^0(1) \indist A\cdot Z(1)$.
Since $S_n(1) = S_n^-(1) + S_n^0(1) + S_n^+(1)$, (\ref{ajeonedim}) implies
(\ref{ajcor2e1}) and the latter implies (\ref{ajeonedimone}).

So let us assume  (\ref{ajeonedimone}). By regular variation of $a_n$ we have for each $t\in(0,1]$
\[ S_n(t) = \frac{1}{a_n} \sum_{i=1}^{[nt]} X_i = \frac{a_{[nt]}}{a_n} \frac{1}{a_{[nt]}}
 \sum_{i=1}^{[nt]} X_i \indist t^{1/\alpha}A\cdot Z(1) \sim A\cdot Z(t).\]
It follows that
\[ \bE \big[e^{i\theta S_n(t)}\big] = \bE \big[e^{i\theta S_n^0(t)}\big] \bE \big[e^{i\theta \big(S_n^-(t) + S_n^+(t)\big)}\big] \to \bE \big[e^{i\theta A\cdot Z(t)}\big], \ \theta \in\GR^1.\]
Since also
\[\bE \big[e^{i\theta S_n^0(t)}\big] \to \bE \big[e^{i\theta A\cdot Z(t)}\big], \ \theta \in\GR^1,\]
and $\bE \big[e^{i\theta A\cdot Z(t)}\big] \neq 0, \ \theta\in\GR^1$ (for $Z(t)$ has infinitely divisible law),
we conclude that
\[\bE \big[e^{i\theta \big(S_n^-(t) + S_n^+(t)\big)}\big] \to 1, \ \theta \in\GR^1.\]
Thus $S_n^-(t) + S_n^+(t) \inprob 0$ and by Lemma
\ref{L2} also $S_n^0(t) - A\cdot Z(t) \inprob 0$. Hence   (\ref{ajeonedim})
follows.

Let us observe that  by Proposition \ref{Prop1} (\ref{ajcor2e1}) holds if, and only if,
\begin{equation}\label{ajtwoinsteadofone}
\sum_{j=-\infty}^0 \frac{\big|d_{n,j}\big|^{\alpha}}{a_n^{\alpha}}
h\Big( \frac{a_n}{\big|d_{n,j}\big|}\Big)
+\ \sum_{j=n+ 1}^{\infty} \frac{\big|d_{n,j}\big|^{\alpha}}{a_n^{\alpha}}
h\Big( \frac{a_n}{\big|d_{n,j}\big|}\Big) \to 0,\ \text{ as $n\to\infty$,}
\end{equation}
i.e. relation (\ref{ajcondfdd}) holds. Therefore the proof of Theorem \ref{Th1} will be complete, if we can show that convergence of {\em one-dimensional} distributions implies the finite dimensional  convergence. But this is obvious in view of (\ref{ajeonedim}):
\[ \big(S_n(t_1), S_n(t_2), \ldots, S_n(t_m)\big) - A\cdot \big(Z_n(t_1), Z_n(t_2), \ldots, Z_n(t_m)\big) \inprob 0,\]
and the  finite dimensional distributions of stochastic processes $A\cdot Z_n(t)$ are convergent to those of $A\cdot Z(t)$.

\begin{rem}\label{Rem1}
Observe that for one-sided moving averages the two conditions in (\ref{ajcondfdd})
reduce to one (the expression in the other equals $0$). This is the reason we use in Theorem
\ref{Th1} two conditions replacing the single statement (\ref{ajtwoinsteadofone}).
\end{rem}

\begin{rem}\label{RemConst}
In the proof of Proposition \ref{Prop1} we used the Three Series Theorem with the level
of truncation $1$. It is well known that  any $r \in (0,+\infty)$ can be chosen as the truncation level.
Hence conditions  (\ref{ajcondfdd}) admit an equivalent reformulation in the ``$r$-form''
\begin{equation*}
\begin{split}
\sum_{j=-\infty}^0 \frac{\big|d_{n,j}\big|^{\alpha}}{a_n^{\alpha}}
h\Big( \frac{r \cdot a_n}{\big|d_{n,j}\big|}\Big) &\to 0,\ \text{ as $n\to\infty$.}\\
\sum_{j=n+ 1}^{\infty} \frac{\big|d_{n,j}\big|^{\alpha}}{a_n^{\alpha}}
h\Big( \frac{r \cdot a_n}{\big|d_{n,j}\big|}\Big) &\to 0,\ \text{ as $n\to\infty$.}
\end{split}
\end{equation*}
\end{rem}

\section{Functional convergence}\label{secfc}

\subsection{Convergence in the $M_1$ topology}

As outlined in Introduction (see also Section \ref{seccompl} below), the convergence of finite dimensional distributions of linear processes built on heavy-tailed innovations cannot be, in general, strengthened to functional convergence in any of Skorokhod's topologies $J_1, J_2, M_1, M_2$.

The general linear process $\{X_i\}$ can be, however, represented as a difference of linear processes with non-negative coefficients. Let us recall the notation introduced in Corollary \ref{corext}:
\begin{align*}
 X_i^+ =& \sum_j c_{i-j}^+ Y_j,\qquad T_n^+(t) = \frac{1}{a_n} \sum_{i=1}^{[nt]} X_i^+,\\
\quad X_i^- =&  \sum_j c_{i-j}^- Y_j,\qquad T_n^-(t) = \frac{1}{a_n} \sum_{i=1}^{[nt]} X_i^-.
\end{align*}

Notice, that in general $X_i^{\pm}(\omega)$ {\em is not} equal to $\big(X_i(\omega)\big)^{\pm}$ and  that we have
\begin{equation}\label{ajs32}
S_n(t) = T_n^+(t) - T_n^-(t).
\end{equation}

The point is that both $T_n^+(t)$ and $T_n^-(t)$ are partial sums of {\em associated} sequences in the sense of \cite{EPW67} (see e.g. \cite{BuSh07} for the contemporary theory)
and thus exhibit much more regularity.

Theorem 1 of Louhichi and Rio \cite{LoRi11} can be specified to the case of linear processes considered in our paper in the following way.

\begin{prop}\label{PropLR}
Let the innovation sequence $\{Y_j\}$ satisfies the usual conditions. Let
\begin{equation}\label{ajs3plus}
c_j \geq 0, j\in\GZ, \text{ and } \sum_j c_j < +\infty.
\end{equation}
If the linear process $\{X_i\}$ is well-defined and
\[ S_n(t)  \infdd A\cdot Z(t),\]
then also functionally
\[ S_n \indist A\cdot Z\]
on the Skorokhod space $\GD([0,1])$ equipped with the $M_1$ topology.
\end{prop}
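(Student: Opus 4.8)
The plan is to obtain the statement as a specialization of Theorem~1 of Louhichi and Rio \cite{LoRi11}, so that the only genuine work is to check that the present linear process meets the hypotheses of that theorem and that its conclusion reduces to the asserted $M_1$-convergence. Since the finite dimensional convergence $S_n(t)\infdd A\cdot Z(t)$ is assumed here (it is precisely what Theorem~\ref{Th1} supplies), what has to be added is functional tightness in the $M_1$ topology together with the identification of the limit; both are furnished by \cite{LoRi11} once the structural hypotheses are verified. Thus I would organize the argument as: (a) record association as the structural input, (b) verify the quantitative tail hypotheses, (c) invoke \cite{LoRi11}.

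The decisive structural feature is \emph{association}. Because $c_j\geq 0$ for all $j$, every finite vector $(X_{i_1},\dots,X_{i_m})$ is a coordinatewise nondecreasing function of the independent family $\{Y_j\}$; since independent variables are associated and nondecreasing functions of associated variables remain associated \cite{EPW67}, the sequence $\{X_i\}$ is associated, and so are the partial sums $\sum_{i=1}^{k}X_i=\sum_j d_{k,j}Y_j$, whose coefficients $d_{k,j}=\sum_{\ell=1-j}^{k-j}c_\ell$ are again nonnegative. This monotone dependence is exactly what distinguishes $M_1$ from the finer topologies: mass is accumulated monotonically in the underlying innovations, so the completed graphs of $S_n$ cannot develop the cancelling up-and-down oscillations that occur for mixed-sign coefficients (as in the example of \cite{AvTa92}) and that obstruct convergence in $J_1,J_2,M_2$; consequently the relevant $M_1$-oscillation modulus stays controllable, which is the mechanism behind the tightness argument in \cite{LoRi11}.

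It then remains to confirm the remaining quantitative hypotheses of \cite{LoRi11}: that the stationary associated sequence $\{X_i\}$ has regularly varying tails of index $\alpha$ consistent with attraction to $\mu_{\alpha}$, and that the limiting process is the $\alpha$-stable L\'evy Motion $A\cdot Z$. The tail regularity follows from the usual conditions on $\{Y_j\}$ together with $\sum_j|c_j|<\infty$ (a single-big-jump / Breiman-type argument, as in Astrauskas \cite{Ast83}), while the identification of the limit as $A\cdot Z$ is forced by the assumed finite dimensional convergence. The main obstacle I anticipate is not the tightness mechanism---association delivers the needed maximal inequalities and the monotone control of the $M_1$ modulus---but the bookkeeping of translating the abstract covariance and negligibility (clustering) conditions under which \cite{LoRi11} is stated into the concrete linear-process quantities $d_{n,j}$, and checking that summability of $\{c_j\}$ makes the contribution of the small jumps asymptotically negligible so that those conditions are genuinely met. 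Once this verification is complete, Theorem~1 of \cite{LoRi11} yields $S_n\indist A\cdot Z$ on $\GD([0,1])$ in the $M_1$ topology, which is the claim.
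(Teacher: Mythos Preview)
Your proposal is correct and matches the paper's approach exactly: the paper does not give an independent proof of this proposition but presents it explicitly as a specialization of Theorem~1 of Louhichi and Rio \cite{LoRi11}, after observing (just as you do) that nonnegative coefficients make $\{X_i\}$ an associated sequence in the sense of \cite{EPW67}. Your added verification of the tail and negligibility hypotheses is more than the paper spells out, but the route is identical.
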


\begin{rem}\label{Rems31}
The first result of this type was obtained by Avram and Taqqu \cite{AvTa92}. They required
however more regularity on coefficients (e.g. monotonicity of $\{c_j\}_{j\geq1}$ and
$\{c_{-j}\}_{j\geq 1}$).
\end{rem}

\subsection{$M_1$-convergence implies $S$-convergence}

Let us turn to linear processes with coefficients of arbitrary sign. Given decomposition (\ref{ajs32}) and Proposition \ref{PropLR} the strategy is now clear: choose any {\em linear} topology $\tau$ on $\GD([0,1])$ which is {\em coarser} than $M_1$,
then
\[ S_n(t)  \infdd A\cdot Z(t),\]
should imply
\[ S_n \indist A\cdot Z\]
on the Skorokhod space $\GD([0,1])$ equipped with the topology $\tau$. Since convergence of c\`adl\`ag functions in the $M_1$ topology is bounded and implies pointwise convergence outside of a countable set,
there are plenty of such topologies. For instance any space of the form $L^{p}\big([0,1],\mu\big)$, where $p\in [0,\infty)$ and $\mu$ is an {\em atomless} finite measure on $[0,1]$, is suitable.  The point is to choose the {\em finest} among linear topologies with required properties, for we want to have the maximal family of continuous functionals on $\GD([0,1])$,

Although we are not able to identify such an ``ideal" topology,  we  believe that this distinguished position belongs to the $S$ topology, introduced in \cite{J97-EJP}.  This is a non-metric sequential topology, with sequentially continuous addition, which is stronger than any of mentioned above $L^p(\mu)$ spaces and is functional in the sense it has the following classic property (see Theorem 3.5 of \cite{J97-EJP}).

\begin{prop}\label{Prop3}
Let $\GQ \subset [0,1]$ be dense, $1 \in \GQ$. Suppose that for each finite subset
$\GQ_0 = \{  q_1 < q_2 < \ldots < q_m \} \subset \GQ$ we have as $n\to \infty$
\[ (X_n(q_1), X_n(q_2), \ldots, X_n(q_m)) \indist
(X_0(q_1), X_0(q_2), \ldots, X_0(q_m)),\]
where $X_0$ is a stochastic process with trajectories in $\GD[0,1])$.
If $\{X_n\}$ is uniformly $S$-tight, then
\[ X_n \indist X_0,\]
on the Skorokhod space $\GD([0,1])$ equipped with the  $S$ topology.
\end{prop}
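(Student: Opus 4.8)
The plan is to run the classical ``relative compactness plus identification of the limit'' scheme (the Prokhorov argument familiar from functional limit theorems on $\GD([0,1])$), adapted to the fact that the $S$ topology is sequential and non-metric. I would not attempt to verify an equality of laws against a convergence-determining class directly; instead I would extract subsequential limits, show that each such limit carries the finite-dimensional distributions prescribed by the hypothesis, and then invoke a determining-class argument to conclude that every subsequential limit equals $\cL(X_0)$. Throughout I treat convergence in distribution in the $S$ topology as convergence of $\bE[\Phi(X_n)]$ against the defining family of $S$-continuous functionals $\Phi$, which is what lets a subsequence argument go through despite non-metrizability.

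\emph{Step 1 (relative compactness).} The uniform $S$-tightness hypothesis is exactly the condition that makes $\{\cL(X_n)\}$ relatively compact for convergence in distribution in the $S$ topology; this is the Prokhorov-type characterization of $S$-tightness established in \cite{J97-EJP}. Because $S$ is sequential, relative compactness is to be read sequentially: every subsequence $\{X_{n_k}\}$ contains a further subsequence $\{X_{n_{k_l}}\}$ with $X_{n_{k_l}} \indist X^{\ast}$ in $S$ for some process $X^{\ast}$ with trajectories in $\GD([0,1])$.

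\emph{Step 2 (identification of the finite-dimensional laws of $X^{\ast}$).} Here I would exploit the fact that $S$ is finer than every $L^{p}(\mu)$ with $\mu$ atomless and that the endpoint evaluation $x\mapsto x(1)$ is $S$-continuous, so that for an atomless finite measure $\mu$ the functional $x\mapsto \int_0^1 x\,d\mu$ is $S$-continuous. Fix $q\in\GQ$ at which the limit law is regular (all but at most countably many $q$), and for $\delta>0$ set $A_{\delta}(x)=\delta^{-1}\int_q^{q+\delta} x(s)\,ds$, an $S$-continuous functional. By the continuous mapping theorem $A_{\delta}(X_{n_{k_l}})\indist A_{\delta}(X^{\ast})$, while right-continuity of c\`adl\`ag paths gives $A_{\delta}(X^{\ast})\to X^{\ast}(q)$ as $\delta\downarrow 0$; combining this with the hypothesis that $X_n(q)\indist X_0(q)$ (and its joint version over finite subsets of $\GQ$) forces $X^{\ast}$ and $X_0$ to share the same finite-dimensional distributions on $\GQ$, after reaching the exceptional times as right limits along the regular ones.

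\emph{Step 3 (conclusion).} Since $\GQ$ is dense and $1\in\GQ$, right-continuity of the trajectories shows that the finite-dimensional distributions on $\GQ$ determine those at every time point, and these in turn determine a law on $\GD([0,1])$; hence $X^{\ast}\stackrel{d}{=}X_0$, independently of the chosen subsequence. Thus every subsequence of $\{X_n\}$ has a further subsequence converging in distribution in $S$ to $X_0$, and applying the numerical subsequence principle to $\bE[\Phi(X_n)]$ for each $S$-continuous test functional $\Phi$ yields $X_n \indist X_0$ in the $S$ topology. The main obstacle is Step 2: because the $S$ topology is deliberately coarse, point evaluations are not globally $S$-continuous, so matching the finite-dimensional laws of the subsequential limit with those of $X_0$ requires careful bookkeeping of the (random, at most countable) set of times at which $S$-convergence may fail to reduce to pointwise convergence. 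The delicate interplay between this countable exceptional set and the merely dense---rather than co-countable---index set $\GQ$ is resolved precisely by passing through the $S$-continuous averaging functionals together with right-continuity of the paths and the assumption $1\in\GQ$.
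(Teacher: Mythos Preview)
This proposition is not proved in the present paper; it is quoted as Theorem~3.5 of \cite{J97-EJP} and used as a tool. So there is no in-paper argument to compare your sketch against, and the comments below concern your proposal on its own merits and relative to the argument in \cite{J97-EJP}.

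Your overall architecture---extract subsequential $S$-limits from tightness, identify each as $X_0$, then conclude via the subsequence principle applied to $\bE[\Phi(X_n)]$---is correct and is the scheme one expects. The genuine gap is in Step~2. From the three facts
\[
A_\delta(X_{n_{k_l}}) \indist A_\delta(X^{\ast}),\qquad A_\delta(X^{\ast})\to X^{\ast}(q)\ \ (\delta\downarrow 0),\qquad X_{n_{k_l}}(q)\indist X_0(q),
\]
you cannot ``combine'' to get $X^{\ast}(q)\stackrel{d}{=} X_0(q)$ without interchanging the limits in $l$ and in $\delta$. That interchange would require either $A_\delta(X_{n_{k_l}})\indist A_\delta(X_0)$---which is \emph{not} implied by f.d.d.\ convergence on $\GQ$, because $A_\delta$ reads the process on a continuum of times---or a uniform-in-$l$ bound of the type $\sup_l \bP\big(|A_\delta(X_{n_{k_l}})-X_{n_{k_l}}(q)|>\eta\big)\to 0$. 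Uniform $S$-tightness does not deliver this: a single jump of $X_n$ anywhere in $(q,q+\delta)$, which tightness of $N_\eta(X_n)$ certainly permits, already makes $|A_\delta(X_n)-X_n(q)|$ of order one. Restricting to ``regular'' $q$ does not help either; Example~\ref{typical} shows $x_n\to_S 0$ with $x_n(1/2)\equiv 1$, although the limit is continuous at $1/2$.

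The identification in \cite{J97-EJP} avoids this interchange by working pathwise. One applies the a.s.\ Skorokhod representation for subsequences (cf.\ \cite{J97-TPA}) to the \emph{augmented} sequence $\big(X_{n_{k_l}},\,(X_{n_{k_l}}(q))_{q\in\GQ'}\big)$, $\GQ'\subset\GQ$ countable with $1\in\GQ'$, obtaining on a common space both $\tilde X_{n_{k_l}}\to_S \tilde X^{\ast}$ a.s.\ and $\tilde X_{n_{k_l}}(q)\to\tilde\xi_q$ a.s.\ for every $q\in\GQ'$, with $(\tilde\xi_q)_q\stackrel{d}{=}(X_0(q))_q$. Since $X_0$ is c\`adl\`ag, a.s.\ there is a c\`adl\`ag $\tilde X_0$ with $\tilde X_0(q)=\tilde\xi_q$; the deterministic Corollary~\ref{ajcritcor} then forces $\tilde X_{n_{k_l}}\to_S\tilde X_0$, hence $\tilde X^{\ast}=\tilde X_0$ a.s.\ and $X^{\ast}\stackrel{d}{=}X_0$. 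Your averaging route might be salvageable, but only after extracting from $S$-tightness an equicontinuity-type control that your sketch does not supply.
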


For readers familiar with the limit theory for stochastic processes the above property may seem obvious. But it is trivial only for processes with continuous trajectories. It is not trivial even  in the case of the Skorokhod $J_1$ topology, since the point evaluations
\[ \pi_t : \GD([0,1]) \to \GR^1,\ \pi_t(x) = x(t),\]
can be $J_1$-discontinuous at some $x\in \GD([0,1])$ (see \cite{Top69} for the result corresponding to Proposition \ref{Prop3}).
In the  $S$ topology
 the point evaluations are {\em nowhere} continuous (see \cite{J97-EJP}, p. 11).
Nevertheless Proposition \ref{Prop3} holds for the $S$ topology, while it {\em does not hold}
for the linear metric spaces $L^p(\mu)$ considered above. It follows that the $S$ topology is suitable for the needs of limit theory for stochastic processes. It admits even such efficient tools like the a.s Skorokhod representation for subsequences \cite{J97-TPA}. On the other hand, since $\GD([0,1])$ equipped with $S$ is non-metric and sequential, many of apparently standard reasonings require special tools and careful analysis. This will be seen below.

Before we define the $S$ topology we need some notation.
 Let $\GV([0,1]) \subset \GD([0,1])$ be the space of (regularized) functions of finite variation on $[0,1]$,
equipped with the norm of total variation $\|v\| = \|v\|(1)$, where
\[ \|v\|(t) = \sup \Big\{|v(0)| + \sum_{i=1}^{m} |v(t_i) - v(t_{i-1})|\Big\},\]
and the supremum is taken over all finite partitions $0=t_0 < t_1 < \ldots < t_m = t$. Since $\GV([0,1])$ can be identified with a dual of $(\GC([0,1]),\|\cdot\|_{\infty})$, we have on it the
weak-$*$ topology. We shall write $v_n \Rightarrow v_0$ if for every $f \in \GC([0,1])$
\[
\int_{[0,1]} f(t) dv_n(t) \to \int_{[0,1]} f(t) dv_0(t).
\]

\begin{defn} {\bf ($S$-convergence and the $S$ topology)}
We shall say that $x_n$ $S$-converges to $x_0$ (in short $x_n \to_S x_0$)
if for every $\varepsilon > 0$ one can
find elements $v_{n,\varepsilon}\in \GV([0,1])$, $n=0,1,2,\ldots $ which are
$\varepsilon$-uniformly close to $x_n$'s and weakly-$*$ convergent:
\begin{eqnarray}
\|x_n - v_{n,\varepsilon}\|_{\infty} \leq \varepsilon,&&\ n = 0, 1, 2,
\ldots,\\
v_{n,\varepsilon} \Rightarrow v_{0,\varepsilon},&& \text{as $n\to\infty$}.
\end{eqnarray}
The $S$  topology is the sequential topology determined by
the $S$-convergence.
\end{defn}

\begin{rem}\label{RemonS} This definition was given in \cite{J97-EJP} and we refer to this paper for detailed derivation of basic properties  of $S$-convergence and construction of the $S$ topology, as well as for instruction how to effectively operate with $S$. Here we shall stress only that the  $S$ topology emerges naturally in the context of the following {\em  criteria of compactness}, which will be used in the sequel.
\end{rem}

\begin{prop}[2.7 in \cite{J97-EJP}]\label{ajcritcomp}
For $\eta > 0$, let $N_{\eta}(x)$ be the {\em number of $\eta$-oscillations} of the function $x \in \GD([0,1])$, i.e. the largest integer $N \geq 1$, for which there exist some points
\[0 \leq t_1<t_2 \leq t_3<t_4 \leq \ldots \leq t_{2N-1}<t_{2N} \leq 1,\]
such that
\[|x(t_{2k})-x(t_{2k-1})|>\eta \quad \mbox{for all} \ k=1,\ldots,N.\]

Let $\cK\subset \GD$.  Assume that
\begin{align}\label{ajsup}
\sup_{x\in \cK} \|x\|_{\infty} &< +\infty,\\
\sup_{x\in \cK} N_{\eta}(x) &< +\infty, \text{ for each $\eta > 0$.}\label{ajosc}
\end{align}
Then from any sequence $\{x_n\} \subset \cK$ one can extract a subsequence $\{x_{n_k}\}$ and find $x_0 \in \GD([0,1])$ such that $x_{n_k}
\ines x_0$.

Conversely, if $\cK\subset \GD([0,1])$ is relatively compact with respect to $\inessh$, then it satisfies both (\ref{ajsup}) and (\ref{ajosc}).
\end{prop}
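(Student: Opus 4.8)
The plan is to prove the two implications separately, the sufficiency part (the conditions (\ref{ajsup})--(\ref{ajosc}) yield sequential relative $S$-compactness) being the substantial one. Its engine is the remark that a uniform bound on the number of $\eta$-oscillations converts, at each fixed scale $\eta$, into an approximation of the elements of $\cK$ by functions of \emph{uniformly bounded total variation}; once this is available one applies weak-$*$ sequential compactness of balls in $\GV([0,1])=\big(\GC([0,1]),\|\cdot\|_{\infty}\big)^{*}$ and diagonalises over a null sequence of scales.

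Concretely, I would first prove an approximation lemma: fixing $\eta>0$ and $x\in\cK$, the stopping-time construction $\tau_0=0$, $\tau_{i+1}=\inf\{t>\tau_i:|x(t)-x(\tau_i)|>\eta\}$ and the step function $v_{x,\eta}$ equal to $x(\tau_i)$ on $[\tau_i,\tau_{i+1})$ satisfy $\|x-v_{x,\eta}\|_{\infty}\le\eta$ by right-continuity, while the number of steps is at most $N_{\eta/2}(x)+1\le M_{\eta/2}+1$ with $M_{\eta/2}=\sup_{y\in\cK}N_{\eta/2}(y)<\infty$; hence $\|v_{x,\eta}\|\le 2(M_{\eta/2}+1)\sup_{y\in\cK}\|y\|_{\infty}=:C_\eta<\infty$ uniformly over $\cK$. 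By Banach--Alaoglu together with separability of $\GC([0,1])$ the ball $\{v:\|v\|\le C_\eta\}$ is weak-$*$ sequentially compact, so choosing $\eta_m\downarrow 0$ and diagonalising I extract one subsequence $\{x_{n_k}\}$ along which $v_{x_{n_k},\eta_m}\Rightarrow w_m$ for every $m$, with $w_m\in\GV([0,1])$.

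It remains to produce the limit $x_0$ and to verify $x_{n_k}\ines x_0$. I would build $x_0$ independently of the $w_m$: refining the subsequence once more so that $x_{n_k}(q)$ converges for every $q$ in a fixed countable dense set $\GQ\ni 1$ (possible as $\sup_k\|x_{n_k}\|_{\infty}<\infty$), set $x_0(q)$ to be the limit; lower semicontinuity of $N_\eta$ under pointwise convergence gives $N_\eta(x_0|_{\GQ})\le\sup_{y\in\cK}N_\eta(y)$ for every $\eta$, so $x_0|_{\GQ}$ is uniformly regulated and extends uniquely to some $x_0\in\GD([0,1])$. The main obstacle is then to \emph{reconcile} the weak-$*$ limits $w_m$ with this pointwise limit, since the definition of $\ines$ demands the \emph{uniform} bound $\|x_0-w_m\|_{\infty}\le\eta_m$ whereas weak-$*$ convergence controls only integrals against continuous test functions. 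I would resolve this by the standard fact that weak-$*$ convergence of uniformly bounded-variation functions forces pointwise convergence at continuity points of the limit: at points $q\in\GQ$ that are continuity points of both $w_m$ and $x_0$ (a co-countable set) one passes to the limit in $|v_{x_{n_k},\eta_m}(q)-x_{n_k}(q)|\le\eta_m$ to obtain $|w_m(q)-x_0(q)|\le\eta_m$, and right-continuity upgrades this to $\|x_0-w_m\|_{\infty}\le\eta_m$. Setting $v_{0,\eta_m}:=w_m$ then verifies both clauses $\|x_{n_k}-v_{n_k,\eta_m}\|_{\infty}\le\eta_m$ and $v_{n_k,\eta_m}\Rightarrow v_{0,\eta_m}$ of the definition, giving $x_{n_k}\ines x_0$.

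For the converse I would argue by contradiction using two soft consequences of $S$-convergence read off from the definition of $\ines$. Taking $\ve=1$, the approximants $v_{n,1}$ are weak-$*$ convergent, hence norm-bounded in $\GV$ by Banach--Steinhaus, and since $\|v\|_{\infty}\le\|v\|$ one gets $\sup_n\|x_n\|_{\infty}\le 1+\sup_n\|v_{n,1}\|<\infty$; so every $S$-convergent sequence is uniformly bounded. Taking $\ve=\eta/3$ and using that $\|x_n-v_{n,\eta/3}\|_{\infty}\le\eta/3$ implies $N_\eta(x_n)\le N_{\eta/3}(v_{n,\eta/3})\le 3\|v_{n,\eta/3}\|/\eta$ (each $\eta$-oscillation of $x_n$ being an $\eta/3$-oscillation of the approximant, and each such oscillation contributing more than $\eta/3$ to the total variation), one gets $\sup_n N_\eta(x_n)<\infty$. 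Now if $\cK$ were relatively $S$-compact yet violated (\ref{ajsup}) (resp. (\ref{ajosc})) one could pick $x_n\in\cK$ with $\|x_n\|_{\infty}\to\infty$ (resp. $N_\eta(x_n)\to\infty$); an $S$-convergent subsequence would then contradict the respective uniform bound just established.
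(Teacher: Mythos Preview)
The paper itself does not prove this proposition; it is quoted verbatim from \cite{J97-EJP}, so there is no in-paper argument to compare against. Your outline is the natural one and is essentially correct, and the converse direction is clean. For the direct implication there is one genuine (though easily repaired) gap.

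In the reconciliation step you pass to the limit in $|v_{x_{n_k},\eta_m}(q)-x_{n_k}(q)|\le\eta_m$ at points $q\in\GQ$ that are also continuity points of $w_m$, and then invoke density plus right-continuity to upgrade to the sup-norm bound $\|x_0-w_m\|_{\infty}\le\eta_m$. The difficulty is that $\GQ$ is a fixed \emph{countable} set, selected before the weak-$*$ limits $w_m$ are known. Nothing prevents every point of $\GQ$ from being a discontinuity of some (or every) $w_m$; the set $\GQ\cap\{\text{continuity points of }w_m\}$ need not be dense and could even be empty, so the right-continuity upgrade is unjustified as written. The fix is to reorder the construction: first carry out the Banach--Alaoglu diagonalisation so that all the $w_m$ are determined; \emph{then} choose $\GQ\ni 1$ countable, dense in $[0,1]$, and disjoint from the countable set $\bigcup_m\{\text{discontinuity points of }w_m\}$; and only afterwards refine the subsequence once more for pointwise convergence on $\GQ$ and define $x_0$. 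Further extraction does not alter the weak-$*$ limits, so the scheme stays consistent, and now every $q\in\GQ$ is a continuity point of each $w_m$.

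A smaller remark: the assertion that $v_{x_{n_k},\eta_m}\Rightarrow w_m$ forces $v_{x_{n_k},\eta_m}(q)\to w_m(q)$ at continuity points $q$ of $w_m$ is correct, but it is not immediate from the bare definition $\int f\,dv_{n}\to\int f\,dw_m$ for $f\in\GC([0,1])$; one needs the uniform total-variation bound (automatic via Banach--Steinhaus) together with a Portmanteau-type approximation of indicators by continuous test functions, and care with the identification of $\GV([0,1])$ with the dual of $\GC([0,1])$ (note the paper's norm $\|v\|$ includes the term $|v(0)|$). This step deserves an explicit line.
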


\begin{cor}[2.14 in \cite{J97-EJP}]\label{ajcritcor}
Let $\GQ \subset [0,1]$, $1\in\GQ$, be dense. Suppose that $\{x_n\}\subset \GD([0,1])$ is relatively $S$-compact and as $n\to \infty$
\[
x_n(q) \to x_0(q),\ q \in \GQ.\]
Then $x_n \to x_0$ in $S$.
\end{cor}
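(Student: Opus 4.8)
The plan is to combine the compactness criterion (Proposition \ref{ajcritcomp}) with an identification of the limit through an auxiliary, atomless mode of convergence, and then to transfer the conclusion to the sequential $S$ topology by a subsequence argument. Since the $S$ topology is sequential and $S$-limits are unique (\cite{J97-EJP}), it suffices to show that every subsequence of $\{x_n\}$ contains a further subsequence $S$-converging to $x_0$; the double-subsequence principle then delivers $x_n \ines x_0$. Relative $S$-compactness means exactly that $\{x_n\}$ satisfies (\ref{ajsup}) and (\ref{ajosc}), so Proposition \ref{ajcritcomp} lets me extract from an arbitrary subsequence a further subsequence $\{x_{n_k}\}$ with $x_{n_k} \ines y$ for some $y \in \GD([0,1])$. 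Everything then reduces to proving $y = x_0$.

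For this identification I would use two facts about the extracted limit. First, as noted in the text the $S$ topology is stronger than $L^p([0,1],\mu)$ for any atomless finite $\mu$; taking Lebesgue measure, $x_{n_k} \ines y$ implies $x_{n_k} \to y$ in $L^1([0,1],dt)$. Second, I claim the hypotheses force $x_{n_k} \to x_0$ in $L^1$ as well: the uniform bound (\ref{ajsup}) together with the uniform control of $\eta$-oscillations (\ref{ajosc}) shows that each super-level set $\{t : x_{n_k}(t) > a\}$ is a union of boundedly many intervals, so if $\{|x_{n_k}-x_0|>\delta\}$ had measure bounded away from $0$ it would contain a subinterval of fixed length, hence a point $q\in\GQ$ at which $x_{n_k}(q)$ stays away from $x_0(q)$, contradicting the assumed convergence on the dense set $\GQ$. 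This yields convergence in measure and, with the uniform bound, convergence in $L^1$. Comparing the two limits gives $y = x_0$ almost everywhere, so by right-continuity of c\`adl\`ag functions $y$ and $x_0$ coincide on $[0,1)$. The endpoint is recovered from $1 \in \GQ$: $S$-convergence forces $x_{n_k}(1) \to y(1)$ while by hypothesis $x_{n_k}(1)\to x_0(1)$, whence $y(1)=x_0(1)$ and $y = x_0$ throughout.

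The main obstacle is the second fact, namely promoting pointwise convergence on the (possibly countable) dense set $\GQ$ to genuine in-measure convergence to $x_0$. A dipole example such as $x_n = c\,\I_{[t,\,t+1/n)}$ with $t$ irrational shows that $S$-convergence (indeed $L^1$ convergence) can hold while pointwise convergence fails at an isolated point, so one cannot hope to match the values of $y$ and $x_0$ directly on $\GQ$; this is precisely why I route the identification through an atomless integral mode of convergence rather than through point evaluations. Making the super-level-set argument rigorous -- partitioning $[0,1]$ using the finitely many large jumps of the c\`adl\`ag function $x_0$ so that the comparison with $x_0$ reduces to honest super-level sets of $x_{n_k}$, and extracting a single offending point of $\GQ$ uniformly in $k$ -- is the one genuinely technical step, whereas the non-metric and merely sequential nature of $S$ is disposed of cleanly by the subsequence reduction at the outset.
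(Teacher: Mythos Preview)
The paper does not supply a proof of this corollary; it is quoted verbatim as Corollary~2.14 from \cite{J97-EJP}. So there is no ``paper's own proof'' to compare against, and I will assess your argument on its merits and against what the proof in \cite{J97-EJP} presumably looks like.

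Your overall architecture is fine: the subsequence reduction via Remark~\ref{RemKK}, extraction of an $S$-limit $y$ by Proposition~\ref{ajcritcomp}, passage from $S$-convergence to $L^1$-convergence, and the endpoint argument at $t=1$ are all correct. The difficulty is entirely in your Step~4, i.e.\ showing $x_{n_k}\to x_0$ in measure (hence in $L^1$) from pointwise convergence on the dense set $\GQ$ together with the uniform bounds (\ref{ajsup})--(\ref{ajosc}).

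Here there is a genuine gap. Your key claim, that ``each super-level set $\{t:x_{n_k}(t)>a\}$ is a union of boundedly many intervals'', is \emph{false} under (\ref{ajsup})--(\ref{ajosc}). Take the step function equal to $a$ on $[0,1/2)$ and equal to $a+(-1)^m/m$ on $[1-2^{-m+1},1-2^{-m})$ for $m\ge 1$: it is c\`adl\`ag, has $N_\eta<\infty$ for every $\eta>0$, yet $\{f>a\}$ has infinitely many components. What \emph{is} true is that the number of components of $\{f>a\}$ that meet $\{f>a+\eta\}$ is bounded by $N_\eta(f)+1$, but exploiting this to trap a single $q\in\GQ$ uniformly in $k$ still requires a further extraction (letting the finitely many ``large'' intervals stabilise along a sub-subsequence) combined with a separate control of $x_0$ near its large jumps. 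This can be made to work, but it is substantially more than the one sentence you allot to it, and your last paragraph concedes as much without actually carrying it out.

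The route taken in \cite{J97-EJP} avoids this detour entirely. In the proof of the compactness criterion (Proposition~\ref{ajcritcomp} here), the $S$-limit $y$ is \emph{constructed} from pointwise limits along a countable dense set $R\ni 1$, chosen at the outset, followed by a c\`adl\`ag regularisation. Since $R$ is at our disposal, one simply takes $R$ to be a countable dense subset of $\GQ$ containing $1$; the hypothesis $x_n(q)\to x_0(q)$ on $\GQ$ then forces the constructed limit to be the c\`adl\`ag regularisation of $x_0|_R$, which is $x_0$ itself. No $L^1$ identification is needed. This is presumably why the statement is a \emph{corollary} in \cite{J97-EJP}: it falls out of the method of proof of Proposition~\ref{ajcritcomp} rather than requiring a separate argument.
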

\begin{rem}\label{RemKK}
The  $S$  topology is {\em sequential}, i.e. it is generated by the convergence $\inessh$.
By the Kantorovich-Kisy\'nski recipe \cite{Kis60} $x_n \to x_0$ in  $S$ topology if, and only if, in each subsequence $\{x_{n_k}\}$ one can find a further subsequence $x_{n_{k_l}} \ines x_0$. This is the same story as with a.s. convergence and convergence in probability of random variables.
\end{rem}

According to our strategy, we are going to prove that
Skorokhod's $M_1$-topology is stronger than the $S$ topology or, equivalently, that $x_n \inemone x_0$ implies  $x_n \ines x_0$. We refer the reader to the original Skorohod's article \cite{Sko56} for the definition of the $M_1$ topology, as well as to Chapter 12 of \cite{Whi02} for a comprehensive account of properties of this topology.

The $M_1$-convergence can be described using a suitable modulus of continuity. We define for $x\in\GD([0,1])$ and $\delta > 0$
\begin{equation}\label{eqomega}
w^{M_1}(x,\delta):=\sup_{0 \vee (t_2-\delta) \leq t_1<t_2<t_3 \leq 1 \wedge (t_2+\delta)}H\big(x(t_1),x(t_2),x(t_3)\big),
\end{equation}
where $H(a,b,c)$ is the distance between $b$ and the interval with endpoints $a$ and $c$:
\[H(a,b,c)=(a\wedge c-a \wedge c \wedge b) \vee (a \vee c \vee b-a \vee c).\]
 %(see also Remark 12.5.1 of \cite{whitt02}),

\begin{prop}[2.4.1 of \cite{Sko56}]
\label{charact-M1-conv}
Let $(x_n)_{n \geq 1}$ and $x_0$ be arbitrary elements in $\GD([0,1])$. Then
\[x_n \inemone x_0\]
if, and only if, for some dense subset $\GQ \subset [0,1]$ containing $0$ and $1$,
\begin{equation}\label{ajemonefdd}
x_n(t) \to x(t), \ t\in \GQ,
\end{equation}
and
\begin{equation}\label{ajemonemod}
\lim_{\delta \to 0}\limsup_{n \to \infty}w^{M_1}(x_n,\delta)=0.
\end{equation}
In particular, if $x_n \inemone x_0$, then
\[x_n(t) \to x_0(t)\]
for $t=1$ and at every point of continuity of $x_0$.
\end{prop}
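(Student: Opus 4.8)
The plan is to prove the two implications separately, working throughout with the parametric (completed-graph) representation of $M_1$-convergence: $x_n \inemone x_0$ if and only if there are continuous, time-monotone parametrizations $(r_n,u_n)$ of the completed graphs of $x_n$ and $(r_0,u_0)$ of $x_0$ with $\|r_n-r_0\|_{\infty}\to 0$ and $\|u_n-u_0\|_{\infty}\to 0$ (see Chapter 12 of \cite{Whi02}). The forward implication, necessity of (\ref{ajemonefdd}) and (\ref{ajemonemod}), is the routine half; the substance is the converse, for which I would use an $M_1$-compactness argument paralleling Corollary \ref{ajcritcor} rather than a bare-hands construction of parametrizations.

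For necessity, first note that for any $x_0\in\GD([0,1])$ one has $w^{M_1}(x_0,\delta)\to 0$ as $\delta\to 0$; this is the $M_1$ analogue of the c\`adl\`ag oscillation property and holds because a jump contributes a vertical segment to the completed graph, along which the functional $H$ in (\ref{eqomega}) vanishes. I would then establish a Lipschitz-type estimate $w^{M_1}(x_n,\delta)\le w^{M_1}(x_0,\delta+2\eta_n)+2\eta_n$, where $\eta_n\to 0$ is the $M_1$-distance from $x_n$ to $x_0$: reading the values $x_n(t_i)$ off the graph as $u_n$ evaluated at time-preimages of $t_i$ and comparing with $(r_0,u_0)$ through the uniform bounds turns an overshoot for $x_n$ on a window of width $2\delta$ into an overshoot for $x_0$ on a slightly larger window, up to $2\eta_n$. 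Letting $n\to\infty$ and then $\delta\to 0$ yields (\ref{ajemonemod}). For the pointwise statement, at a continuity point $t$ of $x_0$ the completed graph meets the vertical line $\{r=t\}$ in the single point $(t,x_0(t))$, so choosing $s$ with $r_0(s)=t$ gives $u_0(s)=x_0(t)$ and, by uniform convergence, $r_n(s)\to t$ together with $u_n(s)\to x_0(t)$; combining this with the already-established smallness of the $M_1$-oscillation of $x_n$ near $t$ upgrades it to $x_n(t)\to x_0(t)$. The continuity points are co-countable, hence dense, and adjoining $0$ and $1$ gives the set $\GQ$; convergence at $t=1$ (and $t=0$) is forced because the parametrizations are pinned at the endpoints of the graph.

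For sufficiency I would run a subsequence argument. Conditions (\ref{ajemonefdd}) and (\ref{ajemonemod}) first yield relative $M_1$-compactness of $\{x_n\}$: the uniform modulus bound $\lim_{\delta\to 0}\limsup_{n}w^{M_1}(x_n,\delta)=0$, together with uniform boundedness of $\{x_n(q)\}$ on the dense set $\GQ$ (which, through the modulus, bounds $\sup_t|x_n(t)|$), is exactly the $M_1$ tightness criterion (Theorem 12.12.2 of \cite{Whi02}). Consequently every subsequence of $\{x_n\}$ has a further subsequence $x_{n'}\inemone y$ for some $y\in\GD([0,1])$. By the necessity already proved, $x_{n'}(q)\to y(q)$ at every $q\in\GQ$ that is a continuity point of $y$; but $x_{n'}(q)\to x_0(q)$ for all $q\in\GQ$, so $y$ and $x_0$ agree on a dense set of common continuity points, whence $y=x_0$ by right-continuity. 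Since every subsequence has a further subsequence converging to the same limit $x_0$, the Urysohn/Kantorovich--Kisy\'nski principle (as in Remark \ref{RemKK}) gives $x_n\inemone x_0$.

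The main obstacle is the sufficiency half, and within it the passage from the interior modulus condition to genuine $M_1$-compactness. The quantity $w^{M_1}$ in (\ref{eqomega}) controls behaviour on windows centred at interior times but says nothing directly about one-sided oscillation as $t\downarrow 0$ or $t\uparrow 1$; reconciling this with the compactness criterion requires exploiting the assumed convergence at the endpoints $0,1\in\GQ$ to supply the missing boundary control, and this is the delicate point. A secondary difficulty, hidden in the necessity step, is the overshoot estimate for $w^{M_1}$ under the $M_1$ metric, which must be argued with care because the values of $x_n$ correspond to non-unique points of the completed graph; the alternative of explicitly building convergent parametrizations from a nearly monotone partition with grid points in $\GQ$ is feasible but markedly more computational.
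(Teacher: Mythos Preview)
The paper does not prove this proposition: it is quoted as Proposition 2.4.1 of Skorokhod \cite{Sko56} and stated without proof, then used as a black box in the proof of Theorem \ref{S-weaker-M1}. There is therefore nothing in the paper to compare your attempt against.

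Your sketch itself is a reasonable outline of the standard argument (parametric-graph estimates for necessity, relative $M_1$-compactness plus identification of the limit for sufficiency), and you have correctly flagged the genuine subtlety: the modulus $w^{M_1}$ as defined in (\ref{eqomega}) does not by itself control boundary oscillation near $0$ and $1$, so the compactness criterion in \cite{Whi02} is not literally satisfied from (\ref{ajemonemod}) alone. Your proposed fix---using convergence at the endpoints $0,1\in\GQ$ to supply the missing one-sided control---is the right idea but would need to be made precise, since pointwise convergence at $1$ together with smallness of the interior modulus does not immediately bound $\sup_{t\in[1-\delta,1]}|x_n(t)-x_n(1)|$. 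If you intend this as a self-contained proof rather than a citation, that boundary step is where the work lies; otherwise, for the purposes of this paper, the proposition is simply invoked from \cite{Sko56}.
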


\begin{lem}
For any $a,b,c,d \in \GR^1$
\[ |a - b| \leq |c-d| + H(c,a,d) + H(c,b,d).\]
\end{lem}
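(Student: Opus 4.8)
The claim is a purely real-analytic inequality about the functional $H$, so the plan is to unwind the definition of $H$ and verify it by cases.

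Recall that $H(c,x,d)$ measures the distance from $x$ to the interval with endpoints $c$ and $d$; concretely $H(c,x,d) = (c\wedge d - c\wedge d\wedge x)\vee(c\vee d\vee x - c\vee d)$. The key observation is that $H(c,x,d)$ equals the amount by which $x$ falls below $\min(c,d)$ or above $\max(c,d)$: it is $0$ precisely when $x$ lies in $[c\wedge d,\, c\vee d]$, it is $(c\wedge d)-x$ when $x < c\wedge d$, and it is $x-(c\vee d)$ when $x > c\vee d$. Thus the natural first step is to record the formula
\[
H(c,x,d) = \big((c\wedge d) - x\big)^+ + \big(x - (c\vee d)\big)^+,
\]
where exactly one of the two summands can be nonzero. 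Equivalently, $c\wedge d - H(c,x,d) \leq x \leq c\vee d + H(c,x,d)$, which exhibits $H(c,x,d)$ as controlling how far $x$ can stray outside the interval $[c\wedge d, c\vee d]$.

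Applying this to both $x=a$ and $x=b$ gives the two-sided bounds $c\wedge d - H(c,a,d) \leq a \leq c\vee d + H(c,a,d)$ and the analogous pair for $b$. From these I would estimate $a - b$ and $b - a$ separately. For $a-b$, use the upper bound on $a$ and the lower bound on $b$:
\[
a - b \leq \big(c\vee d + H(c,a,d)\big) - \big(c\wedge d - H(c,b,d)\big) = (c\vee d - c\wedge d) + H(c,a,d) + H(c,b,d).
\]
Since $c\vee d - c\wedge d = |c-d|$, this yields $a - b \leq |c-d| + H(c,a,d) + H(c,b,d)$. By symmetry (swapping the roles of $a$ and $b$), the same argument gives $b - a \leq |c-d| + H(c,b,d) + H(c,a,d)$, and combining the two bounds produces $|a-b|$ on the left, which is exactly the claim.

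I do not expect any serious obstacle here; the only delicate point is establishing the closed-form identity for $H$ in the first step, since $H$ is defined through a somewhat opaque combination of minima and maxima. The cleanest route is to check that identity directly by splitting into the three cases $x < c\wedge d$, $c\wedge d \leq x \leq c\vee d$, and $x > c\vee d$, in each case simplifying the $\wedge$/$\vee$ expressions in the definition of $H$. Once that identity is in hand the inequality follows mechanically from the two-sided bounds, with no regularity or topological input needed.
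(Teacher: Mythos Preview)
Your proof is correct. The identity $H(c,x,d) = \big((c\wedge d) - x\big)^+ + \big(x - (c\vee d)\big)^+$ is easily verified by the three-case split you outline, and once it is in place the two-sided bound $c\wedge d - H(c,x,d) \leq x \leq c\vee d + H(c,x,d)$ and the subtraction argument give the inequality immediately.

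The paper takes a different, more hands-on route: assuming implicitly $c\leq d$, it runs through the relative orderings of $a,b$ with respect to $c,d$ (the cases $c\leq a\leq b\leq d$, $a\leq c\leq b\leq d$, $a\leq c\leq d\leq b$, $a\leq b\leq c\leq d$, and then appeals to symmetry for the rest), in each case reading off the values of the $H$-terms and checking the inequality directly. Your approach is more structural: by first isolating the closed-form identity for $H$, you replace the case analysis by a single algebraic estimate, and the symmetry in $a,b$ is handled in one line rather than by reducing remaining cases. The paper's argument is slightly more elementary in that it never states the identity for $H$ explicitly, but it is longer and somewhat repetitive; your version is cleaner and makes the geometric content (that $H(c,x,d)$ controls the overshoot of $x$ past the endpoints of $[c\wedge d, c\vee d]$) transparent.
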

\begin{proof}
If $c \leq a \leq b \leq d$, then $b-a \leq d - c  = d-c + H(c,a,d) + H(c,b,d)$. If $a \leq c \leq b \leq d$ then $b - a = b-c + c - a \leq  d- c + H(c,a,d) = d - c + H(c,a,d) + H(c,b,d)$. If $a \leq c \leq d \leq b$ then $b - a = b - d + d - c + c -a = H(c,b,d) + d -c + H(c,a,b)$. If $ a \leq b \leq c \leq d$, then $b - a \leq |b - c| + |c-a| = H(c,b,d) + H(c,a,d) \leq   H(c,b,d) + H(c,a,d) + d - c$.
The other cases can be reduced to the considered above.
\end{proof}
\begin{cor}
\label{lem-m1-bound}
Let $x \in \GD([0,1])$. For any $0 \leq s \leq u <v \leq t \leq 1$,
\[|x(u)-x(v)| \leq |x(s)-x(t)|+H(x(s),x(u),x(t))+H(x(s),x(v),x(t)).\]
\end{cor}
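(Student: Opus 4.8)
The plan is to invoke the preceding Lemma directly, since this Corollary is nothing but its specialization to the four function values. The crucial conceptual point is that although the arguments satisfy the \emph{domain} ordering $0 \leq s \leq u < v \leq t \leq 1$, this gives us no information whatsoever about the ordering of the real numbers $x(s), x(u), x(v), x(t)$: a c\`adl\`ag function need not be monotone. This is exactly why the Lemma was formulated for \emph{arbitrary} reals $a,b,c,d$ with no ordering hypothesis, its proof already having exhausted the relevant case analysis over the relative positions of the four values.

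Concretely, I would set $a = x(u)$, $b = x(v)$, $c = x(s)$, and $d = x(t)$. Substituting these into the Lemma's inequality $|a-b| \leq |c-d| + H(c,a,d) + H(c,b,d)$ yields
\[ |x(u) - x(v)| \leq |x(s) - x(t)| + H(x(s), x(u), x(t)) + H(x(s), x(v), x(t)), \]
which is precisely the assertion of the Corollary.

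The only point requiring verification is that the substitution is legitimate, namely that $x(s), x(u), x(v), x(t)$ are all well-defined real numbers; this holds since $x \in \GD([0,1])$ and $s,u,v,t \in [0,1]$. I do not expect any genuine obstacle here. The entire substance of the argument resides in the preceding Lemma, and the value of this Corollary is organizational: it records the inequality in the geometric form in which it will be applied later, where the increment $|x(u)-x(v)|$ of a function over an inner interval $[u,v]$ is controlled by the increment $|x(s)-x(t)|$ over a surrounding interval $[s,t]$ together with two $H$-terms, each of which is bounded by the $M_1$ modulus $w^{M_1}(x,\delta)$ from \eqref{eqomega} once $t - s \leq 2\delta$.
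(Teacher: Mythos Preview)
Your proposal is correct and matches the paper's approach exactly: the Corollary is stated immediately after the Lemma with no separate proof, since it is obtained by the very substitution $a=x(u)$, $b=x(v)$, $c=x(s)$, $d=x(t)$ that you describe.
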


\begin{lem}
\label{lem-m1-number}
Let $x \in \GD([0,1])$. For $0 \leq s<t \leq 1$, define
\[\beta=\sup_{s \leq u<v<w \leq t}H(x(u),x(v),x(w)).\]
If $\eta>2\beta$ then
\[N_{\eta}(x;[s,t]) \leq \frac{|x(t)-x(s)|+\beta}{\eta-\beta},\]
where $N_{\eta}(x;[s,t])$ denotes the number of $\eta$-oscillations of $x$ in the interval $[s,t]$.
\end{lem}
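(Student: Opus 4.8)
The plan is to reduce the statement to a telescoping estimate along the points witnessing a maximal family of $\eta$-oscillations. Fix $N=N_\eta(x;[s,t])$ and points $s\le t_1<t_2\le t_3<\cdots\le t_{2N-1}<t_{2N}\le t$ with $|x(t_{2k})-x(t_{2k-1})|>\eta$ for every $k$. Write $y_i=x(t_i)$ and $D_k=|y_{2k}-y_{2k-1}|>\eta$. Since there are $N$ oscillations, $N\eta<\sum_{k=1}^N D_k$, so it suffices to establish the bound $\sum_{k=1}^N D_k\le |x(t)-x(s)|+(N+1)\beta$. Combining the two gives $N\eta<|x(t)-x(s)|+(N+1)\beta$, and dividing by $\eta-\beta>0$ (positive since $\eta>2\beta\ge0$) yields the claim.

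First I would prove a \emph{sign-alignment} step: under $\eta>\beta$ all oscillations point the same way, i.e.\ the signs $\mathrm{sign}(y_{2k}-y_{2k-1})$ are constant in $k$. Suppose two consecutive oscillations disagree, say $y_{2k}>y_{2k-1}+\eta$ while $y_{2k+1}>y_{2k+2}+\eta$. Apply the defining bound $H\le\beta$ of $\beta$ to the admissible triples $(t_{2k-1},t_{2k},t_{2k+2})$ and $(t_{2k-1},t_{2k+1},t_{2k+2})$, whose time-arguments are strictly increasing because $t_{2k}\le t_{2k+1}<t_{2k+2}$. The first triple forces $y_{2k+2}>y_{2k-1}+\eta-\beta$ (otherwise the middle value $y_{2k}$ would lie more than $\beta$ above the interval spanned by the endpoints); the second then gives $\max(y_{2k-1},y_{2k+2})=y_{2k+2}$ and hence $y_{2k+1}\le y_{2k+2}+\beta$, contradicting $y_{2k+1}>y_{2k+2}+\eta$. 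The reversed configuration is obtained by replacing $x$ with $-x$, which leaves $\beta$, $N_\eta$ and $|x(t)-x(s)|$ unchanged. Thus all signs agree, and after possibly reflecting we may assume $y_{2k}>y_{2k-1}+\eta$ for every $k$.

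With all oscillations increasing I would control the gaps $G_k=y_{2k+1}-y_{2k}$ via the pointwise estimate $y_{2k+1}\ge y_{2k}-\beta$ for $1\le k\le N-1$. If $t_{2k}=t_{2k+1}$ this is trivial; if $t_{2k}<t_{2k+1}$, apply $H\le\beta$ to $(t_{2k-1},t_{2k},t_{2k+1})$. Since $y_{2k}>y_{2k-1}+\eta>y_{2k-1}+\beta$, the value $y_{2k+1}$ cannot lie below $y_{2k-1}$ (that would give $H=y_{2k}-y_{2k-1}>\beta$), so $y_{2k+1}\ge y_{2k-1}$; then either $y_{2k+1}\ge y_{2k}$ outright, or $y_{2k}-y_{2k+1}=H(y_{2k-1},y_{2k},y_{2k+1})\le\beta$. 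Telescoping the chain $y_1\to y_2\to\cdots\to y_{2N}$ gives
\[ \sum_{k=1}^N D_k=(y_{2N}-y_1)-\sum_{k=1}^{N-1}G_k\le (y_{2N}-y_1)+(N-1)\beta. \]

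Finally I would pass from $y_{2N}-y_1$ to the endpoint difference using Corollary \ref{lem-m1-bound} with $u=t_1$, $v=t_{2N}$, which yields $|y_{2N}-y_1|\le |x(s)-x(t)|+H(x(s),x(t_1),x(t))+H(x(s),x(t_{2N}),x(t))\le |x(t)-x(s)|+2\beta$, the two $H$-terms being $\le\beta$ (or $0$ in the boundary cases $t_1=s$ or $t_{2N}=t$). Chaining the displays gives $\sum_{k=1}^N D_k\le |x(t)-x(s)|+(N+1)\beta$, exactly the reduction. The main obstacle is the sign-alignment step: once one knows that oscillations larger than $\beta$ cannot reverse direction, the gap control and the final passage are routine applications of the same triple inequality $H\le\beta$ together with Corollary \ref{lem-m1-bound}.
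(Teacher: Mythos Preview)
Your proof is correct and follows essentially the same route as the paper's: establish that all $\eta$-oscillations point in the same direction (your sign-alignment step), control the intermediate gaps by $-\beta$, telescope to bound $y_{2N}-y_1$, and finish with Corollary~\ref{lem-m1-bound}. The only differences are organizational---you separate sign-alignment and gap control into two steps and phrase the final estimate via $\sum_k D_k$, whereas the paper interleaves them and works directly with $x(t_{2N})-x(t_1)>N(\eta-\beta)+\beta$---and you are more explicit about the degenerate cases $t_{2k}=t_{2k+1}$ and $t_1=s$ or $t_{2N}=t$.
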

\begin{proof} Let $s \leq t_1<t_2 \leq t_3<t_4 \leq \ldots \leq t_{2N-1}<t_{2N} \leq t$ be such that
\[|x(t_{2k})-x(t_{2k-1})|>\eta \quad \text{ for all } \ k=1,\ldots,N.\]
Assume first that $x(t_2)-x(t_1)>\eta$. We claim that
\[x(t_3)\geq x(t_2)-\beta \quad \mbox{and} \quad  \quad x(t_4)-x(t_3)>\eta.\]
To see this, suppose that $x(t_3)<x(t_2)-\beta$. Then the distance between $x(t_2)$ and the interval with endpoints $x(t_1)$ and $x(t_3)$ is greater than $\beta$, which is a contradiction. Hence $x(t_3)\geq x(t_2)-\beta$. On the other hand, if we assume that $x(t_4)-x(t_3)<-\eta$, we obtain that
\[x(t_1)=x(t_1)-x(t_2)+x(t_2)-x(t_3)+x(t_3)<-\eta+\beta+x(t_3)<x(t_3)-\beta,\]
which means that the distance between $x(t_3)$ and the interval with endpoints $x(t_1)$ and $x(t_4)$ is greater than $\beta$, again a contradiction.

Repeating this argument, we infer that:
\[x(t_{2k})-x(t_{2k-1})>\eta, \quad \text{ for all } k=1, \ldots,N\]
and
\[x(t_{2k+1})-x(t_{2k})>-\beta \quad \text{ for all } k=1, \ldots, N-1.\]
Taking the sum of these inequalities, we conclude that:
\begin{equation}
\label{lemB-step1}
x(t_{2N})-x(t_1)>N\eta-(N-1)\beta=N(\eta-\beta)+\beta.
\end{equation}

On the other hand, by Corollary \ref{lem-m1-bound}, we have:
\begin{equation}
\label{lemB-step2}
|x(t_{2N})-x(t_1)|\leq |x(t)-x(s)|+2\beta.
\end{equation}

Combining (\ref{lemB-step1}) and (\ref{lemB-step2}), we obtain that
\[N \leq \frac{|x(t)-x(s)|+\beta}{\eta-\beta},\]
which is the desired upper bound.

Assuming that $x(t_2)-x(t_1) < -\eta$ we come in a similar way to the inequality
\[ x(t_{2N})-x(t_1) < -N\eta +(N-1)\beta=- N(\eta-\beta)- \beta\]
or
\[ |x(t_{2N})-x(t_1)| \>  N(\eta-\beta) + \beta.\]
This again allows us to use Corollary  \ref{lem-m1-bound} and gives the desired bound for $N$
\end{proof}

The following result was stated without proof in \cite{J97-EJP}. A short proof can be given using Skorohod's criterion 2.2.11 (page 267 of \cite{Sko56}) for the $M_1$-convergence, expressed in terms of the number of upcrossings. This proof has a clear disadvantage: it refers to an equivalent definition of the $M_1$-convergence, but the equivalence of both definitions was not proved in Skorokhod's paper. In the present article we give a complete proof.

\begin{thm}
\label{S-weaker-M1}
The $S$ topology  is weaker than the $M_1$ topology (and hence, weaker than the $J_1$ topology).
Consequently, a set $A \subset \GD([0,1])$ which is relatively $M_1$-compact is also relatively $S$-compact.
\end{thm}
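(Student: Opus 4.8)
The plan is to establish the single sequential implication

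$$x_n \inemone x_0 \quad\Longrightarrow\quad x_n \ines x_0,$$

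and then to upgrade it to the topological inclusion and the compactness consequence. The upgrade is routine: since the $S$ topology is sequential, an $S$-closed set is exactly one closed under $\inessh$; the above implication makes every such set $M_1$-sequentially closed, and as the $M_1$ topology is metrizable this means $M_1$-closed. Hence every $S$-closed set is $M_1$-closed, i.e. $S$ is weaker than $M_1$. For the final assertion, given $A$ relatively $M_1$-compact, any sequence in $A$ has an $M_1$-convergent subsequence (in its $M_1$-closure), which by the implication is $S$-convergent, so $A$ is relatively $S$-compact.

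So suppose $x_n \inemone x_0$. By Proposition \ref{charact-M1-conv} there is a dense $\GQ \subset [0,1]$ with $0,1 \in \GQ$ on which $x_n(q) \to x_0(q)$, and the modulus condition (\ref{ajemonemod}) holds. In view of Corollary \ref{ajcritcor} it suffices to prove that $\{x_n\}$ is relatively $S$-compact, for then pointwise convergence on the dense set $\GQ \ni 1$ forces $x_n \ines x_0$. Relative $S$-compactness I obtain from Proposition \ref{ajcritcomp}, by verifying the uniform bound (\ref{ajsup}) and the uniform oscillation bound (\ref{ajosc}). The bound (\ref{ajsup}) is the standard fact, already noted in the Introduction, that $M_1$-convergence is uniformly bounded; if one prefers a direct argument, fix $\delta$ with $\sup_n w^{M_1}(x_n,\delta) < \infty$, and for each $t$ pick $q_1<t<q_2$ from a finite $\delta$-net of $\GQ$, so that $H\big(x_n(q_1),x_n(t),x_n(q_2)\big) \le w^{M_1}(x_n,\delta)$ places $x_n(t)$ within $w^{M_1}(x_n,\delta)$ of the interval spanned by the convergent (hence bounded) values $x_n(q_1),x_n(q_2)$; taking the supremum over the finite net and over $t$ gives $\sup_n \|x_n\|_\infty < \infty$.

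The crux is (\ref{ajosc}). Fix $\eta>0$. By (\ref{ajemonemod}) choose $\delta>0$ and $n_0$ so that $w^{M_1}(x_n,\delta) < \eta/3$ for all $n \ge n_0$, and partition $[0,1]$ into intervals $I_i=[s_{i-1},s_i]$, $i=1,\dots,M$ with $M=\lceil 1/\delta\rceil$, each of length at most $\delta$. The decisive observation links the $M_1$ modulus to the quantity $\beta$ of Lemma \ref{lem-m1-number}: if $u<v<w$ all lie in an interval of length $\le\delta$ then $v-\delta \le u < v < w \le v+\delta$, so $H\big(x_n(u),x_n(v),x_n(w)\big) \le w^{M_1}(x_n,\delta)$; consequently the $\beta$ attached to each $I_i$ satisfies $\beta \le w^{M_1}(x_n,\delta) < \eta/3$, and in particular $\eta>2\beta$. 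Lemma \ref{lem-m1-number} then gives, for $n\ge n_0$,

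$$N_{\eta}(x_n;I_i) \le \frac{|x_n(s_i)-x_n(s_{i-1})| + \beta}{\eta-\beta} \le \frac{2\sup_m\|x_m\|_{\infty} + \eta/3}{2\eta/3},$$

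a bound independent of $i$ and of $n$. Summing over the $M$ intervals, with an additive correction (a constant depending only on $M$) for the finitely many $\eta$-oscillations straddling the partition points, yields $\sup_{n\ge n_0} N_{\eta}(x_n) < +\infty$; the finitely many functions $x_n$, $n<n_0$, have finite $N_{\eta}$ individually, so (\ref{ajosc}) holds. With (\ref{ajsup}) and (\ref{ajosc}) in hand, Proposition \ref{ajcritcomp} gives relative $S$-compactness of $\{x_n\}$, and Corollary \ref{ajcritcor} completes the proof.

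The main obstacle is precisely the oscillation estimate (\ref{ajosc}): transferring the $M_1$ modulus control into a uniform bound on the number of $\eta$-oscillations. Everything hinges on the elementary geometric remark that $\beta$ over a window of width $\delta$ is dominated by $w^{M_1}(x_n,\delta)$, which is what makes Lemma \ref{lem-m1-number} applicable with $\eta>2\beta$; the only bookkeeping nuisance is the oscillations crossing partition points, and these contribute merely a fixed additive constant, leaving the bound uniform in $n$.
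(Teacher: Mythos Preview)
Your proof is correct and follows essentially the same route as the paper: both reduce to verifying the compactness criteria of Proposition \ref{ajcritcomp} and then invoke Corollary \ref{ajcritcor}, with the oscillation bound obtained by partitioning $[0,1]$ into intervals of length at most $\delta$, applying Lemma \ref{lem-m1-number} on each piece (where the local $\beta$ is dominated by $w^{M_1}(x_n,\delta)$), and bounding the straddling oscillations by the number $M$ of partition points. The paper differs only cosmetically---it takes the partition points among continuity points of $x_0$ and treats the sup bound via a case split on whether $x_0(1-)=x_0(1)$---but the substance is identical.
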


\begin{proof} Let $x_n \inemone x_0$. By Proposition \ref{charact-M1-conv}
\[ x_n(t) \to x_0(t),\]
on the dense set of points of continuity of $x_0$ and for $t=1$. Suppose we know
that $\cK  =\{x_n\}$ satisfies conditions (\ref{ajsup}) and (\ref{ajosc}).
Then by Proposition \ref{ajcritcomp} $\{x_n\}$
is relatively $S$-compact and by Corollary
\ref{ajcritcor}
$x_n \to x_0$ in $S$.
Thus it remains to check conditions
\begin{align}\label{ajsup1}
K_{sup} =\sup_n \|x_n\|_{\infty} &< +\infty,\\
K_{\eta} = \sup_n N_{\eta}(x_n) &< \infty, \ \eta > 0.\label{ajosc1}
\end{align}

First suppose that $x_0(1-) = x_0(1)$. Then
$\GD([0,1]) \ni x \mapsto \|x\|_{\infty}$
is $M_1$-continuous at $x_0$. Consequently, $x_n \inemone x_0$ implies $\|x_n\|_{\infty} \to
\|x_0\|_{\infty}$ and (\ref{ajsup1}) follows.

If $x_0(1-) \neq x_0(1)$ we have to proceed a bit more carefully. Consider (\ref{ajemonemod}) and take $\delta > 0$ and $n_0$ such that
$w(x_n,\delta) \leq 1, n \geq n_0$. Find  $ t_0 \in (1-\delta, 1)$ which is a point of continuity of $x_0$. Then
 \[ \sup_{t\in [0,t_0]} |x_n(t)| \to \sup_{t\in [0,t_0]} |x_0(t)|, \]
hence $\sup_n \sup_{t\in [0,t_0]} |x_n(t)| < +\infty$.
We also know that $x_n(t_0) \to x_0(t_0)$ and $x_n(1)
\to x_0(1)$. Choose $n\in\GN$ and $u\in (t_0, 1)$.
By the very definition of the modulus $H$
\[ \begin{split}
|x_n(u)| &\leq |x_n(t_0)| +  |x_n(1)| + H\big(x_n(t_0), x_n(u), x_n(1)\big)\\
& \leq \sup_n |x_n(t_0)| +  \sup_n|x_n(1)| + 1, \quad n \geq n_0.
\end{split}
\]
It follows that also
\[\sup_n \sup_{t\in (t_0,1]} |x_n(t)| < +\infty,\]
and so (\ref{ajsup1}) holds.

In order to prove (\ref{ajosc1}) choose $\eta>0$ and $0<\ve < \eta/2$. By Proposition \ref{charact-M1-conv}, there exist some $\delta > 0$ and an integer $n_0 \geq 1$ such that
$w^{M_1}(x_n,\delta) < \ve, \ n \geq n_0$.
Next we find a partition $0 = t_0 < t_1 < \ldots <  t_{M} = 1$ consisting of points of continuity of $x_0$ and such that
\[ t_{j+1} - t_j < \delta, \ j= 0, 1,\ldots,M-1.\]
Again by Proposition \ref{charact-M1-conv}, there exists an integer $n_1 \geq n_0$ such that for any $n \geq n_1$
\begin{equation}
\label{ineq-xn-x}| x_n(t_j) - x(t_j)| < \ve, \ j = 0,1,\ldots, M.
\end{equation}

Fix an integer $n \geq n_1$. Suppose that $N_{\eta}(x_n) \geq N$, i.e. there exist some points
\begin{equation}\label{oscfigures}
 0\leq s_1 < s_2 \leq s_3 < s_4 \leq \ldots \leq s_{2N-1} < s_{2N} \leq 1,
\end{equation}
such that
\begin{equation}\label{eaj3}
| x_n(s_{2k}) - x_n(s_{2k-1})| > \eta, \quad \mbox{for all} \ k=1,2, \ldots, N.
\end{equation}
The proof of (\ref{ajosc1}) will be complete once we estimate the number $N$ by a constant independent of $n$.

The $\eta$-oscillations of $x_n$ determined by (\ref{oscfigures}) can be divided into two (disjoint) groups. The first group (Group 1) contains the oscillations for which the corresponding interval $[s_{2k-1},s_{2k})$ contains at least one point $t_{j'}$. Since the number of points $t_j$ is $M$,
\begin{equation}
\label{group1}
\mbox{the number of oscillations in Group 1 is at most $M$}.
\end{equation}
In the second group (Group 2), we have those oscillations for which the corresponding interval $[s_{2k-1},s_{2k})$ contains no point $t_{j}$, i.e.
\begin{equation}\label{eaj4}
t_j \leq s_{2k-1} < s_{2k} \leq  t_{j+1} \quad \mbox{for some} \ j=0,1, \ldots, M-1.
\end{equation}

We now use Lemma \ref{lem-m1-number} in each of intervals $[t_j, t_{j+1}]$, $j=0,1,\ldots, m$.
Note that
\[\beta_{n,j}:=\sup_{t_{j} \leq u<v<w \leq t_{j+1}}H\big(x_n(u), x_n(v),x_n(w) \big) \leq w^{M_1}(x_n,\delta)<\ve,\]
hence,
\[N_{\eta}(x_n,[t_j,t_{j+1}]) \leq \frac{|x_n(t_{j+1})-x_n(t_{j})|+\beta_{n,j}}{\eta-\beta_{n,j}} <
\frac{ 2 K_{sup} + \varepsilon}{\eta-\varepsilon}.\]
Since there are $M$ intervals of the form $[t_j,t_{j+1}]$, we conclude that
\begin{equation}
\label{group2}
\mbox{the number of oscillations in Group 2 is at most}
\ M \cdot \frac{ 2 K_{sup} + \varepsilon}{\eta-\varepsilon}
\end{equation}
Summing (\ref{group1}) and (\ref{group2}), we obtain that
\[N \leq   M\left(1+ \frac{ 2 K_{sup} + \varepsilon}{\eta-\varepsilon}\right)=M\frac{2K_{sup}+\eta}{\eta-\ve},\]
which does not depend on $n$. Theorem \ref{S-weaker-M1} follows.
\end{proof}

For the sake of  completeness, we provide also a typical example of a sequence $(x_n)_{n \geq 1}$ in $\GD[0,1]$ which is $S$-convergent, but does not converge in the $M_1$ topology.

\begin{exa}\label{typical}
Let $x=0$ and
$$x_n(t)=1_{[1/2-1/n,1]}(t)-1_{[1/2+1/n,1]}(t)=\left\{
\begin{array}{ll} 1 & \mbox{if $\frac{1}{2}-\frac{1}{n} \leq t <\frac{1}{2}+\frac{1}{n}$} \\
0 & \mbox{otherwise}
\end{array} \right.$$
Then $x_n \ines x$. To see this, we take $v_{n,\ve}=x_{n}$. Then $v_{n,\ve} \Rightarrow v_{\ve}=0$ since for any $f \in C[0,1]$,
$$\int_{0}^{1}f(t)dv_n(t) =f\left(\frac{1}{2}-\frac{1}{n}\right)-f\left(\frac{1}{2}+\frac{1}{n}\right) \to 0.$$
The fact that $(x_n)_{n \geq 1}$ cannot converge in $M_1$ follows by Proposition \ref{charact-M1-conv} since if $t_1<\frac{1}{2}-\frac{1}{n}<t_2<\frac{1}{2}+\frac{1}{n}<t_3$, then $H\big(x_n(t_1),x_n(t_2), x_n(t_3)\big)=1$.
\end{exa}

\subsection{Convergence in distribution in the $S$ topology}

Now we are ready to specify results on functional convergence of stochastic processes in the $S$ topology, which are suitable for
needs of linear processes. They follow directly from Proposition \ref{ajcritcomp} and Proposition \ref{Prop3}.

\begin{prop}[3.1 in \cite{J97-EJP}]\label{PropT}
A family $\{X_{\gamma}\}_{\gamma\in \Gamma}$ of stochastic processes with trajectories in $\GD([0,1])$
is uniformly $S$-tight if, and only if, the families of random variables
$\{\|X_{\gamma}\|_{\infty}\}_{\gamma\in\Gamma}$
and
$\{N_{\eta}(X_{\gamma})\}_{\gamma\in\Gamma}$, $\eta > 0$, are uniformly tight.
\end{prop}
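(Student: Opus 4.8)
The plan is to read this as the probabilistic counterpart of the deterministic compactness criterion in Proposition \ref{ajcritcomp}: that proposition characterizes relatively $S$-compact subsets $\cK \subset \GD([0,1])$ by the two conditions (\ref{ajsup}) and (\ref{ajosc}) on $\|\cdot\|_{\infty}$ and on the oscillation counts $N_{\eta}$, and the present statement should follow by the standard principle that uniform tightness of a family of random elements amounts to capturing uniformly large probability inside relatively compact sets. The only genuinely new ingredient beyond a mechanical transfer is a countable-union argument that lets a single compact set control $N_{\eta}$ for all $\eta > 0$ simultaneously; this rests on the monotonicity $N_{\eta'}(x) \geq N_{\eta}(x)$ for $\eta' \leq \eta$, which is immediate from the definition since every $\eta$-oscillation is also an $\eta'$-oscillation.

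For the sufficiency direction (the two families tight $\Rightarrow$ $\{X_{\gamma}\}$ uniformly $S$-tight), I would fix $\varepsilon > 0$. Using uniform tightness of $\{\|X_{\gamma}\|_{\infty}\}$ I pick $M$ with $\bP(\|X_{\gamma}\|_{\infty} > M) < \varepsilon/2$ for all $\gamma$, and using uniform tightness of each $\{N_{1/m}(X_{\gamma})\}$ I pick $K_m$ with $\bP(N_{1/m}(X_{\gamma}) > K_m) < \varepsilon 2^{-(m+1)}$ for all $\gamma$. I then set $\cK = \{x : \|x\|_{\infty} \leq M,\ N_{1/m}(x) \leq K_m \text{ for all } m \geq 1\}$. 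A union bound gives $\bP(X_{\gamma} \notin \cK) < \varepsilon$ uniformly in $\gamma$, and by monotonicity of $N_{\eta}$ in $\eta$ the set $\cK$ satisfies (\ref{ajsup}) and (\ref{ajosc}): any $\eta > 0$ admits some $m$ with $1/m \leq \eta$, whence $\sup_{x\in\cK} N_{\eta}(x) \leq \sup_{x\in\cK} N_{1/m}(x) \leq K_m < \infty$. Proposition \ref{ajcritcomp} then makes $\cK$ relatively $S$-compact, and passing to its $S$-closure yields the compact set required by uniform $S$-tightness.

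For the necessity direction, I would start from a relatively $S$-compact $\cK_{\varepsilon}$ with $\bP(X_{\gamma} \in \cK_{\varepsilon}) > 1 - \varepsilon$ for all $\gamma$, and invoke the converse half of Proposition \ref{ajcritcomp} to obtain finite constants $C = \sup_{x\in\cK_{\varepsilon}}\|x\|_{\infty}$ and, for each $\eta$, $C_{\eta} = \sup_{x\in\cK_{\varepsilon}} N_{\eta}(x)$. Since $\{\|X_{\gamma}\|_{\infty} > C\} \subseteq \{X_{\gamma} \notin \cK_{\varepsilon}\}$ and $\{N_{\eta}(X_{\gamma}) > C_{\eta}\} \subseteq \{X_{\gamma} \notin \cK_{\varepsilon}\}$, both events have probability below $\varepsilon$ uniformly in $\gamma$, which is exactly uniform tightness of the two real-valued families.

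The main obstacle I anticipate is not the combinatorics but the soft topological bookkeeping forced by $S$ being non-metric and merely sequential: I must make sure that the notion of compactness delivered by Proposition \ref{ajcritcomp} (sequential $S$-compactness) is the one used in the definition of uniform $S$-tightness, handle the passage from the relatively compact $\cK$ to an honest compact set via its $S$-closure, and keep the relevant events measurable (or fall back on inner probability) since the level sets $\{x : \|x\|_{\infty} \leq M\}$ and $\{x : N_{1/m}(x) \leq K_m\}$ must produce measurable events for each $X_{\gamma}$. Verifying that $\|\cdot\|_{\infty}$ and $N_{\eta}$ are measurable functionals on $\GD([0,1])$ with its $S$-Borel structure, and that their sublevel sets are $S$-closed so that the constructed $\cK$ is genuinely compact rather than merely relatively compact, is where the care has to go.
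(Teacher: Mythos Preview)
The paper does not actually prove this proposition: it is quoted verbatim as ``3.1 in \cite{J97-EJP}'' and used as a black box, so there is no proof in the paper to compare your proposal against. Your sketch is the standard and correct derivation of the probabilistic statement from the deterministic compactness criterion (Proposition \ref{ajcritcomp}), and the topological subtleties you flag at the end are exactly the ones that need attention; note that the paper itself records (see Remark \ref{RemSum}, citing Corollary 2.10 of \cite{J97-EJP}) that $\|\cdot\|_{\infty}$ and $N_{\eta}$ are lower semicontinuous in $S$, which gives you $S$-closed sublevel sets and resolves the closure and measurability issues you anticipate.
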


\begin{prop}\label{PropFin}
Let $\{X_n\}_{n\geq 0}$ and $\{Y_n\}_{n\geq 0}$
be two sequences of stochastic processes with trajectories in $\GD([0,1])$ such that as $n\to\infty$
\[\begin{split}
\big(X_n(q_1) &+ Y_n(q_1), X_n(q_2) + Y_n(q_2), \ldots, X_n(q_k) +Y_n(q_k)\big) \\
&\indist \big(X_0(q_1) + Y_0(q_1), X_0(q_2) + Y_0(q_2)\ldots, X_0(q_k) +Y_0(q_k)\big),
\end{split}
\]
for each subset $\GQ_0 = \{ 0 \leq q_1 < q_2 < \ldots < q_k \}$ of a dense set $\GQ\subset [0,1]$,
$1 \in \GQ$.

If $\{X_n\}$ and $\{Y_n\}$ are uniformly $S$-tight, then
\[X_n+Y_n \indist X_0+Y_0\]
on the Skorokhod space $\GD([0,1])$ equipped with the $S$ topology.
\end{prop}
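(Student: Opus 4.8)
The plan is to apply the functional convergence criterion of Proposition \ref{Prop3} directly to the sum process $W_n := X_n + Y_n$, with candidate limit $W_0 := X_0 + Y_0 \in \GD([0,1])$. The finite-dimensional convergence $(W_n(q_1), \ldots, W_n(q_k)) \indist (W_0(q_1), \ldots, W_0(q_k))$ along the dense set $\GQ$ (with $1 \in \GQ$) is precisely the hypothesis, so the only remaining point is to verify that $\{W_n\} = \{X_n + Y_n\}$ is uniformly $S$-tight. I stress that one cannot shortcut this by invoking joint convergence of the pair $(X_n, Y_n)$ together with the sequential continuity of addition in $S$, because we are given the finite-dimensional distributions of the \emph{sum} only, not of the pair; hence the tightness of the sum has to be produced directly from the separate tightness of $\{X_n\}$ and $\{Y_n\}$.

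To establish uniform $S$-tightness of $\{W_n\}$ I would invoke the characterization in Proposition \ref{PropT}: it suffices that the families of real random variables $\{\|W_n\|_{\infty}\}$ and $\{N_{\eta}(W_n)\}$ (for each $\eta > 0$) be uniformly tight. For the sup-norms this is immediate from $\|X_n + Y_n\|_{\infty} \leq \|X_n\|_{\infty} + \|Y_n\|_{\infty}$ together with the elementary fact that a sum of two families bounded in probability is again bounded in probability; by Proposition \ref{PropT} applied to the $S$-tight families $\{X_n\}$ and $\{Y_n\}$, both $\{\|X_n\|_{\infty}\}$ and $\{\|Y_n\|_{\infty}\}$ are uniformly tight.

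The crux is the oscillation count, and here I would first prove the deterministic subadditivity estimate
\[ N_{\eta}(x + y) \leq N_{\eta/2}(x) + N_{\eta/2}(y), \qquad x, y \in \GD([0,1]),\ \eta > 0. \]
Given $N = N_{\eta}(x+y)$ oscillations realized at times $0 \leq t_1 < t_2 \leq \ldots \leq t_{2N-1} < t_{2N} \leq 1$, each inequality $|(x+y)(t_{2k}) - (x+y)(t_{2k-1})| > \eta$ forces, by the triangle inequality, at least one of $|x(t_{2k}) - x(t_{2k-1})| > \eta/2$ or $|y(t_{2k}) - y(t_{2k-1})| > \eta/2$. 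Splitting the indices $k$ into the group where the $x$-increment is large and the complementary group (where the $y$-increment is then necessarily large), each group retains a properly alternating system of times $t_{2k-1} < t_{2k}$ and therefore witnesses the corresponding number of $\eta/2$-oscillations of $x$, respectively of $y$; summing the two group sizes yields the claimed bound. Checking that the selected subfamilies of times still satisfy the alternating order constraint from the definition of $N_{\eta}$ is the only genuinely delicate point, and I expect this combinatorial bookkeeping to be the main obstacle.

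Finally, from the pointwise estimate $N_{\eta}(W_n) \leq N_{\eta/2}(X_n) + N_{\eta/2}(Y_n)$ and the uniform tightness (again via Proposition \ref{PropT}) of $\{N_{\eta/2}(X_n)\}$ and $\{N_{\eta/2}(Y_n)\}$, the family $\{N_{\eta}(W_n)\}$ is dominated by a uniformly tight family and is hence itself uniformly tight. Proposition \ref{PropT} then gives uniform $S$-tightness of $\{W_n\}$, and Proposition \ref{Prop3} completes the argument, yielding $X_n + Y_n \indist X_0 + Y_0$ on $\GD([0,1])$ equipped with the $S$ topology.
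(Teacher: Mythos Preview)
Your proposal is correct and follows exactly the paper's route: reduce to Proposition~\ref{Prop3}, then verify uniform $S$-tightness of $\{X_n+Y_n\}$ via Proposition~\ref{PropT} using the two deterministic bounds $\|x+y\|_{\infty}\leq\|x\|_{\infty}+\|y\|_{\infty}$ and $N_{\eta}(x+y)\leq N_{\eta/2}(x)+N_{\eta/2}(y)$. In fact the paper simply asserts the oscillation inequality without proof, so your sketch is more detailed; the ``delicate'' order constraint you flag is actually immediate, since for any selected subfamily $k_1<\cdots<k_m$ the original ordering $t_{2k_i}\leq t_{2k_i+1}\leq\cdots\leq t_{2k_{i+1}-1}$ is inherited automatically.
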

\noindent{\em Proof of Proposition \ref{PropFin}}\ \
According to Proposition \ref{Prop3}, it is enough to establish the uniform $S$-tightness of  $X_n + Y_n$.
This follows immediately from Proposition \ref{PropT} and from the inequalities
$\|x+y\|_{\infty} \leq \|x\|_{\infty}+\|y\|_{\infty}$
and
\[N_{\eta}(x+y)\leq N_{\eta/2}(x)+N_{\eta/2}(y),\]
valid for arbitrary functions $x,y \in \GD[0,1]$ and $\eta > 0$.

\begin{rem}\label{RemSum} In linear topological spaces the algebraic sum $\cK_1 + \cK_2 = \{x_1 + x_2 \,;\,
x_1 \in \cK_1, x_2 \in \cK_2\}$ of {\em compact } sets $\cK_1$ and $\cK_2$ {\em is compact}. It follows directly from the
continuity of the operation of addition and trivializes the proof of uniform tightness of sum of uniformly tight random elements.  In $\GD([0,1])$ equipped with $S$ we are, however, able to prove that the addition is only sequentially continuous, i.e. if $x_n \ines x_0$ and $y_n \ines y_0$, then $x_n + y_n \ines x_0 + y_0$. In general it does not imply continuity (see \cite{J97-EJP}, p. 18, for detailed discussion). Sequential continuity gives  a weaker property:  the sum $\cK_1 + \cK_2$ of relatively $S$-compact $\cK_1$ and $\cK_2$ is relatively $S$-compact. For the uniform tightness purposes we also need that the $S$-closure of
$\cK_1 + \cK_2$ is again relatively $S$-compact. This
is guaranteed by the lower-semicontinuity in $S$ of $\|\cdot \|_{\infty}$ and $N_{\eta}$ (see \cite{J97-EJP}, Corollary 2.10).
\end{rem}

\subsection{The main result}

\begin{thm}\label{ThMain}
Let $\{Y_j\}$ be an i.i.d. sequence satisfying the usual conditions and $\sum_j|c_j| < +\infty$.
Let $S_n(t)$ be defined by (\ref{aje11}) and $T_n(t)$ by (\ref{eqtete}).
Then
\begin{equation*}%\label{eqextra1}
 T_n(t) \infdd A_{|\cdot|}\cdot Z(t),\ \text{ where $A_{|\cdot|} = \sum_j |c_j|$},
\end{equation*}
implies
\[S_n \indist A\cdot Z, \ \text{ where $A = \sum_j c_j$},\]
 on the Skorokhod space $\GD([0,1])$ equipped with the $S$  topology.
\end{thm}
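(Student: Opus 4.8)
The plan is to combine the decomposition (\ref{ajs32}) with the $M_1$-result of Proposition \ref{PropLR}, the fact that $M_1$-convergence implies $S$-convergence (Theorem \ref{S-weaker-M1}), and the sequential-addition machinery of Proposition \ref{PropFin}. Write $S_n = T_n^+ - T_n^-$, and note that both $\{c_j^+\}$ and $\{c_j^-\}$ are nonnegative summable sequences, so $T_n^+$ and $T_n^-$ are partial-sum processes of linear processes with nonnegative coefficients, to which Proposition \ref{PropLR} applies.

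First I would verify the hypotheses needed to feed Proposition \ref{PropLR}. Corollary \ref{corext} already shows that the assumed finite-dimensional convergence of $T_n$ to $A_{|\cdot|}\cdot Z$ implies $T_n^+ \infdd A_+\cdot Z$ and $T_n^-\infdd A_-\cdot Z$. Since $\{c_j^+\}$ and $\{c_j^-\}$ satisfy (\ref{ajs3plus}), Proposition \ref{PropLR} upgrades each of these to functional $M_1$-convergence: $T_n^+ \indist A_+\cdot Z$ and $T_n^-\indist A_-\cdot Z$ on $\GD([0,1])$ with the $M_1$ topology. By Theorem \ref{S-weaker-M1}, each sequence is therefore $S$-convergent in law as well; in particular, by Proposition \ref{PropT}, both $\{T_n^+\}$ and $\{T_n^-\}$ are uniformly $S$-tight (weak $S$-convergence forces tightness of $\|\cdot\|_\infty$ and of $N_\eta$).

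Next I would assemble the pieces through Proposition \ref{PropFin}, applied to $X_n := T_n^+$ and $Y_n := -T_n^-$. The uniform $S$-tightness of $\{T_n^+\}$ and $\{-T_n^-\}$ is in hand (negation preserves both $\|\cdot\|_\infty$ and $N_\eta$). The finite-dimensional input of Proposition \ref{PropFin} is exactly the statement that the finite-dimensional distributions of $X_n + Y_n = T_n^+ - T_n^- = S_n$ converge to those of the limit; by the last line of Corollary \ref{corext} this limit is $A\cdot Z$, with $A = A_+ - A_- = \sum_j c_j$. Proposition \ref{PropFin} then yields $S_n \indist A\cdot Z$ in the $S$ topology, which is the claim.

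The main subtlety — and the reason Proposition \ref{PropFin} rather than a naive continuity argument is required — is that addition in the $S$ topology is only \emph{sequentially} continuous and $\GD([0,1])$ with $S$ is non-metric, so one cannot directly conclude joint convergence of $(T_n^+, T_n^-)$ and then apply a continuous-mapping theorem for subtraction. The work is hidden in establishing uniform $S$-tightness of the sum via the subadditivity estimates $\|x+y\|_\infty \leq \|x\|_\infty + \|y\|_\infty$ and $N_\eta(x+y)\leq N_{\eta/2}(x)+N_{\eta/2}(y)$, together with the lower-semicontinuity of $\|\cdot\|_\infty$ and $N_\eta$ needed to control the $S$-closure of the sum (Remark \ref{RemSum}); all of this is already packaged inside Proposition \ref{PropFin}, so the proof reduces to checking that its hypotheses hold. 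I would therefore expect the proof itself to be short, essentially a bookkeeping assembly of Corollary \ref{corext}, Proposition \ref{PropLR}, Theorem \ref{S-weaker-M1}, and Proposition \ref{PropFin}.
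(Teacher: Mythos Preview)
Your proposal is correct and follows essentially the same route as the paper: Corollary \ref{corext} for the f.d.d.\ convergence of $T_n^{\pm}$ and $S_n$, Proposition \ref{PropLR} to upgrade $T_n^{\pm}$ to $M_1$-convergence, passage to $S$-tightness, and then Proposition \ref{PropFin} to conclude. The only minor difference is the tightness step: the paper deduces uniform $M_1$-tightness of $\{T_n^{\pm}\}$ from $M_1$-convergence in law via Le Cam's theorem (since $M_1$ is Polish) and then invokes Theorem \ref{S-weaker-M1} to get uniform $S$-tightness, whereas your parenthetical ``weak $S$-convergence forces tightness of $\|\cdot\|_\infty$ and of $N_\eta$'' is not directly supplied by Proposition \ref{PropT} in the non-metric $S$ setting---but this is immediately repaired by the paper's Le Cam argument.
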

\begin{proof}
By Corollary \ref{corext}
\begin{equation*}
T_n^+(t) = \frac{1}{a_n} \sum_{i=1}^{[nt]} X_i^+ \infdd A_+ \cdot Z(t),\ \
 T_n^-(t) = \frac{1}{a_n} \sum_{i=1}^{[nt]} X_i^-  \infdd  A_-\cdot Z(t),
\end{equation*}
where $A_+ = \sum_{i\in\GZ} c_i^+$ and $A_- = \sum_{i\in\GZ} c_i^-$.
It follows from Proposition \ref{PropLR} that $T_n^+ \indist A_+\cdot Z$ on
$\GD([0,1])$ equipped with the  $M_1$ topology. A similar result holds for $T_n^-$. Since the law of every c\`adl\`ag process is $M_1$-tight,   Le Cam's theorem \cite{LeCa57} (see also Theorem 8 in Appendix III of \cite{Bill68}) guarantees that both sequences $\{T_n^+\}$ and $\{T_n^-\}$ are uniformly $M_1$-tight. By Theorem \ref{S-weaker-M1} we obtain the uniform $S$-tightness of both $\{T_n^+\}$ and $\{T_n^-\}$.
Again by Corollary \ref{corext}
\[ S_n(t) = T_n^+(t) - T_n^-(t) \infdd A \cdot Z(t).\]
Now a direct application of Proposition \ref{PropFin} completes the proof of the theorem.
\end{proof}\label{Sec3}

\section{Discussion of sufficient conditions}

\label{secsc}
Conditions (\ref{ajcondfdd}) do not look tractable.  In what follows we shall provide three types of checkable sufficient conditions. In both cases the following slight simplification (\ref{ajcondfddsimp}) of  (\ref{ajcondfdd}) will be useful.
 As in proof of Lemma \ref{L2}, we can find a sequence $j_n \to\infty$,
$j_n = o(n)$, such that
\[
\begin{split}
\sum_{j=-j_n + 1}^0 \frac{\big|d_{n,j}\big|^{\alpha}}{a_n^{\alpha}}
h\Big( \frac{a_n}{\big|d_{n,j}\big|}\Big) &\to 0,\ \text{ as $n\to\infty$.}\\
\sum_{j=n+ 1}^{n+j_n -1} \frac{\big|d_{n,j}\big|^{\alpha}}{a_n^{\alpha}}
h\Big( \frac{a_n}{\big|d_{n,j}\big|}\Big) &\to 0,\ \text{ as $n\to\infty$.}
\end{split}
\]
Hence it is enough to check
\begin{equation}\label{ajcondfddsimp}
\begin{split}
\sum_{j=-\infty}^{-j_n} \frac{\big|d_{n,j}\big|^{\alpha}}{a_n^{\alpha}}
h\Big( \frac{a_n}{\big|d_{n,j}\big|}\Big) &\to 0,\ \text{ as $n\to\infty$.}\\
\sum_{j=n+ j_n}^{+\infty} \frac{\big|d_{n,j}\big|^{\alpha}}{a_n^{\alpha}}
h\Big( \frac{a_n}{\big|d_{n,j}\big|}\Big) &\to 0,\ \text{ as $n\to\infty$.}
\end{split}
\end{equation}
The advantage of this form of the conditions consists in the fact that
\begin{equation}
\label{ajT3e0}\begin{split}
 \sup_{j \leq - j_n}  \big|d_{n,j}\big| &\to 0,\ \text{ as $n\to\infty$},\\
  \sup_{j \geq n + j_n}  \big|d_{n,j}\big| &\to 0,\ \text{ as $n\to\infty$}.
\end{split}
\end{equation}

We will write $\indees$ when convergence in distribution with respect to the $S$ topology takes place.

\begin{cor}\label{ThADR}
Under the assumptions of Theorem \ref{Th1}, if there exists $0 < \beta < \alpha$, $\beta \leq 1$ such that
\begin{equation}\label{ajcondbeta}
\sum_{j\in\GZ} |c_j|^{\beta} < +\infty,
\end{equation}
then
\[ S_n(t) \indees A\cdot Z(t).\]
\end{cor}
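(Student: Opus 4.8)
The plan is to descend through the two functional theorems already in hand and reduce everything to the finite-dimensional criterion of Theorem \ref{Th1}, which is then checked by hand using (\ref{ajcondbeta}). By Theorem \ref{ThMain} it suffices to establish the finite-dimensional convergence
\[ T_n(t) \infdd A_{|\cdot|}\cdot Z(t), \qquad A_{|\cdot|} = \sum_j |c_j|. \]
Since $T_n$ is the normalized partial-sum process of the linear process built on the nonnegative, summable coefficients $\{|c_j|\}$, Theorem \ref{Th1} applied to this coefficient sequence reduces the task to the two conditions (\ref{ajcondfdd}) with $d_{n,j}$ replaced by $\bar d_{n,j} := \sum_{k=1-j}^{n-j}|c_k|$. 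Running the reduction of Section \ref{secsc} verbatim for $\{|c_j|\}$, it is enough to verify the simplified conditions (\ref{ajcondfddsimp}) for $\bar d_{n,j}$, and I may fix $j_n\to\infty$, $j_n=o(n)$, with
\[ \rho_n := \sup_{j\le -j_n}\bar d_{n,j}\le \sum_{k\ge 1+j_n}|c_k|\to 0,\qquad \rho_n' := \sup_{j\ge n+j_n}\bar d_{n,j}\le \sum_{k\le -j_n}|c_k|\to 0, \]
the tails of the absolutely convergent series $\sum_k|c_k|$.

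The core is the following estimate. Fix $\delta$ with $0<\delta<\alpha-\beta$. Writing each summand as $\bar d_{n,j}^{\,\alpha}a_n^{-\alpha}h(a_n/\bar d_{n,j}) = \bar d_{n,j}^{\,\alpha}\,\frac{h(a_n)}{a_n^{\alpha}}\,\frac{h(a_n/\bar d_{n,j})}{h(a_n)}$, I combine $n a_n^{-\alpha}h(a_n)\to 1$ with the Potter bound $h(a_n/u)/h(a_n)\le Cu^{-\delta}$ (valid for $u\le 1$ and $n$ large, uniformly in $u$), which for $j\le -j_n$, where $\bar d_{n,j}\le\rho_n\le 1$ eventually, gives
\[ \frac{\bar d_{n,j}^{\,\alpha}}{a_n^{\alpha}}\,h\!\Big(\frac{a_n}{\bar d_{n,j}}\Big)\le \frac{C'}{n}\,\bar d_{n,j}^{\,\alpha-\delta}. \]
Because $\beta\le 1$, subadditivity yields $\bar d_{n,j}^{\,\beta}\le \sum_{k=1-j}^{n-j}|c_k|^{\beta}$, and since each index $k$ lies in at most $n$ of the windows $\{1-j,\dots,n-j\}$,
\[ \sum_{j\le -j_n}\bar d_{n,j}^{\,\beta}\le n\sum_{k\in\GZ}|c_k|^{\beta}=nA_\beta,\qquad A_\beta:=\sum_k|c_k|^{\beta}<+\infty. \]
Factoring $\bar d_{n,j}^{\,\alpha-\delta}=\bar d_{n,j}^{\,\beta}\,\bar d_{n,j}^{\,\alpha-\delta-\beta}\le \rho_n^{\,\alpha-\delta-\beta}\,\bar d_{n,j}^{\,\beta}$, which is legitimate since $\alpha-\delta-\beta>0$, I obtain
\[ \sum_{j\le -j_n}\frac{\bar d_{n,j}^{\,\alpha}}{a_n^{\alpha}}\,h\!\Big(\frac{a_n}{\bar d_{n,j}}\Big)\le \frac{C'}{n}\,\rho_n^{\,\alpha-\delta-\beta}\,nA_\beta=C'A_\beta\,\rho_n^{\,\alpha-\delta-\beta}\to 0. \]
The second sum in (\ref{ajcondfddsimp}) is treated identically with $\rho_n$ replaced by $\rho_n'$. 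Both conditions hold, so $T_n(t)\infdd A_{|\cdot|}\cdot Z(t)$, and the corollary follows from Theorem \ref{ThMain}.

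I expect the genuine obstacle to be the spurious factor $n$: the double-counting bound $\sum_{j\le -j_n}\bar d_{n,j}^{\,\beta}\le nA_\beta$ is only $O(n)$, which exactly cancels the $1/n$ produced by $n a_n^{-\alpha}h(a_n)\to 1$ and, on its own, would leave a bounded but non-vanishing quantity. The resolution is to choose $\delta$ \emph{strictly} below $\alpha-\beta$, so that a strictly positive leftover power $\alpha-\delta-\beta$ of $\bar d_{n,j}$ survives; uniform smallness of $\bar d_{n,j}$ on the relevant range (a tail of $\sum_k|c_k|$) then supplies the decaying factor $\rho_n^{\,\alpha-\delta-\beta}$ that beats the $n$. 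This is precisely where both halves of the hypothesis enter: $\beta\le 1$ makes the subadditivity $(\sum a_k)^\beta\le\sum a_k^\beta$ available, while $\beta<\alpha$ provides the slack $\alpha-\beta>0$ needed to pick such a $\delta$. The only routine verifications I would still check carefully are that the Potter bound is applied uniformly in $j$ (fine, since $\bar d_{n,j}\le 1$ throughout the range for large $n$) and that the boundary strips removed by passing from (\ref{ajcondfdd}) to (\ref{ajcondfddsimp}) are negligible, exactly as in the proof of Lemma \ref{L2}.
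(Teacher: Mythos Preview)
Your proof is correct and is essentially the paper's argument: both reduce to (\ref{ajcondfddsimp}), use subadditivity of $x\mapsto x^\beta$ together with the fact that each $|c_k|^\beta$ lies in at most $n$ of the windows to control the double sum, and extract a decaying factor from slow variation of $h$ combined with $\sup_j \bar d_{n,j}\to 0$. The only cosmetic differences are that the paper works with $d_{n,j}$ rather than $\bar d_{n,j}$ (the same bounds apply verbatim), uses Karamata's representation in place of Potter bounds, and packages the vanishing factor as $\Psi_{\alpha-\beta}(a_n,a_n/|d_{n,j}|)\to 0$ while invoking Lemma~\ref{L4} for boundedness of the remaining sum.
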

\begin{proof}
We have to check (\ref{ajcondfddsimp}). By simple manipulations and taking into account that
due to (\ref{aje6})
$K =\sup_n n a_n^{-\alpha} h(a_n) < +\infty$ we obtain
\[\begin{split}
& \sum_{j=-\infty}^{-j_n} \frac{\big|d_{n,j}\big|^{\alpha}}{a_n^{\alpha}}
h\Big( \frac{a_n}{\big|d_{n,j}\big|}\Big)  \\
& \hspace{15mm}=
\frac{1}{n} \sum_{j=-\infty}^{-j_n}\big|\sum_{k=1-j}^{n-j}c_k\big|^{\beta}
\frac{n h(a_n)}{a_n^{\alpha}} \big|d_{n,j}\big|^{\alpha-\beta}
\frac{1}{h(a_n)} h\Big( \frac{a_n}{\big|d_{n,j}\big|}\Big)\\
&\hspace{15mm} \leq K  \frac{1}{n} \sum_{j=-\infty}^{-j_n} \sum_{k=1-j}^{n-j}|c_k|^{\beta}
\Psi_{\alpha - \beta}\Big(a_n, \frac{a_n}{\big|d_{n,j}\big|}\Big),
\end{split}
\]
where
\[ \Psi_{\alpha - \beta}(x,y) = \Big(\frac{x}{y}\Big)^{\alpha -\beta} \frac{h(y)}{h(x)}.\]
Let
\[ h(x) = c(x) \exp\big( \int_a^x \frac{\epsilon(u)}{u} \,du\big),\]
where $\lim_{x\to \infty} c(x) = c\in (0,\infty)$ and $\lim_{x\to\infty} \epsilon(x) = 0$, be the Karamata representation of the slowly varying function $h(x)$ (see e.g.
Theorem 1.3.1 in \cite{BGT87}). Take  $0 < \gamma < \min\{\alpha - \beta, c\}$ and let $L >a$ be such that for $x > L$
\[ \epsilon (x) \leq \gamma \text{  and  } c -\gamma < c(x) < c + \gamma.\]
Then we have for $x \geq y \geq L$
\[\frac{h(y)}{h(x)} = \frac{c(y)}{c(x)} \exp\big( \int_y^x \frac{\epsilon(u)}{u}\,du\big) \leq \frac{c + \gamma}{c - \gamma} \exp\Big(\gamma \log \Big( \frac{x}{y}\Big)\Big) = \frac{c + \gamma}{c - \gamma} \Big( \frac{x}{y}\Big)^{\gamma},\]
and so
\[ \Psi_{\alpha-\beta}(x,y) \leq K \Big(\frac{y}{x}\Big)^{\alpha - \beta-\gamma},\ x \geq y \geq L.\]
It follows from that fact and (\ref{ajT3e0}) that
\[\sup_{j\leq -j_n} \Psi_{\alpha - \beta}\Big(a_n, \frac{a_n}{\big|d_{n,j}\big|}\Big) \to 0, \text{ as $n\to\infty$}.\]
Hence it is sufficient to show that
\[ \sup_n\frac{1}{n} \sum_{j=-\infty}^{-j_n} \sum_{k=1-j}^{n-j}|c_k|^{\beta} < +\infty.\]
In fact, more is true.
\begin{lem}\label{L4} If $\sum_{j=0}^{\infty} |b_j| < +\infty$, then for each $t > 0$
\[\frac{1}{n} \sum_{j=0}^{\infty} \sum_{k=1+j}^{n+j}b_k \to 0, \text{ as $n\to\infty$.}\]
\end{lem}
\noindent {\em Proof of Lemma \ref{L4}} We have
\[\begin{split}
\Big|\frac{1}{n} \sum_{j=0}^{\infty} \sum_{k=1+j}^{n+j}b_k \Big|
&\leq \frac{1}{n} \sum_{j=0}^{\infty} \sum_{k=1+j}^{n+j}|b_k| \\
& = \frac{1}{n} \sum_{k=1}^{\infty} (k\wedge n) |b_k| \\
& =  \Big(\frac{1}{n} \sum_{k=1}^{n} k |b_k| + \sum_{k=n+1}^{\infty} |b_k|\Big).
\end{split}\]
The first sum in the last line converges to $0$ by Kronecker's lemma. The second is the
rest of a convergent series.

Returning to the proof of Corollary \ref{ThADR}, let us notice that convergence
\[\sum_{j=n+ j_n}^{+\infty} \frac{\big|d_{n,j}\big|^{\alpha}}{a_n^{\alpha}}
h\Big( \frac{a_n}{\big|d_{n,j}\big|}\Big) \to 0,\ \text{ as $n\to\infty$,}\]
can be checked the same way.
\end{proof}
\begin{cor}\label{Alphabiggerthanone} Under the usual conditions, if $\alpha \in (1,2)$ and $\sum_{j\in\GZ} |c_j| < +\infty$,
then
\[ S_n(t) \indees A\cdot Z(t).\]
\end{cor}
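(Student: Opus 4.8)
The plan is to observe that this statement is nothing more than the borderline instance $\beta = 1$ of Corollary \ref{ThADR}, an instance that becomes admissible precisely because we now assume $\alpha > 1$. I would therefore not reprove anything from scratch, but simply check that the hypotheses of Corollary \ref{ThADR} are met with this choice of exponent. Since Corollary \ref{ThADR} already delivers its conclusion in the $S$ topology (the notation $\indees$), no separate passage through the finite-dimensional statement of Theorem \ref{Th1} or the upgrading mechanism of Theorem \ref{ThMain} is needed.

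Concretely, set $\beta = 1$. Because $\alpha \in (1,2)$ we have the strict inequality $0 < \beta = 1 < \alpha$ together with $\beta = 1 \leq 1$, so $\beta$ lies in the admissible range required by Corollary \ref{ThADR}. Moreover the summability hypothesis $\sum_{j\in\GZ} |c_j| < +\infty$ is literally the statement $\sum_{j\in\GZ} |c_j|^{\beta} < +\infty$ for this $\beta$. Hence Corollary \ref{ThADR} applies verbatim and yields $S_n(t) \indees A\cdot Z(t)$, which is exactly the claim.

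It is worth pointing out where the assumption $\alpha > 1$ enters, since this is the only place any genuine content sits. In the proof of Corollary \ref{ThADR} the auxiliary exponent $\gamma$ is chosen in $(0, \min\{\alpha - \beta, c\})$, and the decay $\Psi_{\alpha - \beta}\big(a_n, a_n/|d_{n,j}|\big) \to 0$ relies on $\alpha - \beta > 0$; with $\beta = 1$ this is exactly $\alpha - 1 > 0$. Thus the strictness $\alpha > 1$ is what keeps that construction nondegenerate. I do not anticipate any obstacle: the sole thing to verify is the admissibility of $\beta = 1$ in the hypotheses of Corollary \ref{ThADR}, and there is nothing further to compute. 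For contrast, when $\alpha \leq 1$ the value $\beta = 1$ is no longer permitted (one would then need $\beta < \alpha \leq 1$ strictly), which is precisely the reason mere summability of the coefficients does not suffice in that regime and the genuinely weaker sufficient conditions developed earlier in this section are required.
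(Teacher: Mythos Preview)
Your proposal is correct and matches the paper's intent: Corollary \ref{Alphabiggerthanone} is stated without a separate proof precisely because it is the case $\beta = 1$ of Corollary \ref{ThADR}, admissible exactly when $\alpha > 1$. Your observation about where the strictness $\alpha > 1$ enters (namely to keep $\alpha - \beta > 0$ in the $\Psi_{\alpha-\beta}$ estimate) is also accurate.
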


\begin{rem}\label{Rem4}
Corollaries \ref{ThADR} and \ref{Alphabiggerthanone} were proved independently
by Astrauskas \cite{Ast83} and Davis and Resnick \cite  {DaRe85}.
Our approach follows direct manipulations of Astrauskas,
while Davis and Resnick involved point process techniques.
\end{rem}

\begin{rem}\label{Rem5}
For $\alpha \leq 1$ assumption (\ref{ajcondbeta}) is unsatisfactory, for it excludes the case of strictly $\alpha$-stable random variables $\{Y_j\}$
with $\sum_j |c_j|^{\alpha} < +\infty$, but
$\sum_j |c_j|^{\beta} = +\infty$ for every $\beta < \alpha$.
With our criterion given in Theorem \ref{Th1} we can easily prove the needed result.
\end{rem}

\begin{cor}\label{Alphaleqone}
Suppose that $\alpha \leq 1$, $\sum_{j\in\GZ} |c_j|^{\alpha} < +\infty$, the usual conditions hold and $h$ is such that
\begin{equation}\label{hboundone}
h(\lambda x)/h(x) \leq M,\  \lambda \geq 1, x \geq x_0,
\end{equation}
for some constants $M$, $x_0$.
If the linear process $\{X_i\}$ is well-defined, then
\[ S_n(t) \indees A\cdot Z(t).\]
\end{cor}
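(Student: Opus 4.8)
The plan is to route everything through the two structural theorems already proved: Theorem~\ref{ThMain} reduces the desired $S$-topology convergence $S_n \indees A\cdot Z$ to the single finite-dimensional statement $T_n(t)\infdd A_{|\cdot|}\cdot Z(t)$, and the latter is, by Theorem~\ref{Th1} applied to the nonnegative coefficient sequence $\{|c_j|\}$, equivalent to condition (\ref{ajcondfdd}) --- or to the more convenient reformulation (\ref{ajcondfddsimp}) --- with $d_{n,j}$ replaced by $\tilde d_{n,j}=\sum_{k=1-j}^{n-j}|c_k|$. First I would record that $\sum_j|c_j|^\alpha<\infty$ with $\alpha\le1$ forces $\sum_j|c_j|<\infty$ (eventually $|c_j|\le1$, whence $|c_j|\le|c_j|^\alpha$), so we are genuinely in the summable regime and both theorems apply; moreover the absolute-value analogue of (\ref{ajT3e0}) holds, since for $j\le -j_n$ the quantity $\tilde d_{n,j}$ is bounded by the tail $\sum_{k\ge 1+j_n}|c_k|\to0$, and symmetrically on the right. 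Thus only (\ref{ajcondfddsimp}) for $\tilde d_{n,j}$ remains to be checked.

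For the left-hand sum I would use the two ingredients that the hypotheses supply. Because $\alpha\le1$, subadditivity gives $\tilde d_{n,j}^{\alpha}=\big(\sum_{k=1-j}^{n-j}|c_k|\big)^{\alpha}\le\sum_{k=1-j}^{n-j}|c_k|^{\alpha}$. Because $\sup_{j\le -j_n}\tilde d_{n,j}\to0$, for all large $n$ and all $j\le -j_n$ we have $\tilde d_{n,j}\le1$, so the argument $a_n/\tilde d_{n,j}=\lambda a_n$ with $\lambda=1/\tilde d_{n,j}\ge1$ and $a_n\ge x_0$; hypothesis (\ref{hboundone}) then yields $h(a_n/\tilde d_{n,j})\le M\,h(a_n)$. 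Combining these with $K=\sup_n n a_n^{-\alpha}h(a_n)<\infty$ from (\ref{aje6}) gives
\[
\sum_{j=-\infty}^{-j_n}\frac{\tilde d_{n,j}^{\alpha}}{a_n^{\alpha}}\,h\Big(\frac{a_n}{\tilde d_{n,j}}\Big)
\le MK\,\frac{1}{n}\sum_{j=-\infty}^{-j_n}\sum_{k=1-j}^{n-j}|c_k|^{\alpha}
\le MK\,\frac{1}{n}\sum_{m=0}^{\infty}\sum_{k=1+m}^{n+m}|c_k|^{\alpha}.
\]
Applying Lemma~\ref{L4} with $b_k=|c_k|^{\alpha}$ (summable by assumption) shows the right-hand side tends to $0$. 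The second sum in (\ref{ajcondfddsimp}) is handled identically after reflecting indices $k\mapsto -k$, using $b_k=|c_{-k}|^{\alpha}$. This verifies (\ref{ajcondfddsimp}) for $\tilde d_{n,j}$ and closes the argument.

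The only genuinely delicate point is the control of the slowly varying factor $h(a_n/\tilde d_{n,j})$. In Corollary~\ref{ThADR} one could afford a $\beta<\alpha$, which produced a spare factor $\tilde d_{n,j}^{\alpha-\beta}$ decaying uniformly and allowed a Potter-type bound on $h(y)/h(x)$ to absorb the slow variation. Here there is no such room: with only $\sum_j|c_j|^{\alpha}<\infty$ one works exactly at the critical exponent, so the slow-variation estimate must come from elsewhere. Hypothesis (\ref{hboundone}) is precisely the substitute, and the step that makes it usable is the observation --- via the absolute-value form of (\ref{ajT3e0}) --- that on the relevant index ranges $\tilde d_{n,j}\le1$, i.e.\ $a_n/\tilde d_{n,j}\ge a_n$, so that $h$ is evaluated only at arguments at least $a_n$ and the monotone-type bound $h(\lambda a_n)\le M h(a_n)$ applies. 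Once this is in place the decay is carried entirely by the Ces\`aro factor $1/n$ through the Kronecker argument of Lemma~\ref{L4}, with no help from any vanishing supremum.
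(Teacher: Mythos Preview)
Your proof is correct and follows essentially the same route as the paper: subadditivity of $x\mapsto x^\alpha$ for $\alpha\le1$ handles the $\tilde d_{n,j}^{\,\alpha}$ factor, hypothesis (\ref{hboundone}) together with the tail-smallness (\ref{ajT3e0}) controls the slowly-varying factor $h(a_n/\tilde d_{n,j})\le M\,h(a_n)$, and Lemma~\ref{L4} applied to $b_k=|c_k|^\alpha$ finishes. The only cosmetic difference is that you work explicitly with $\tilde d_{n,j}$ so as to feed Theorem~\ref{ThMain}, whereas the paper writes the same computation for $d_{n,j}$; the bounds are identical since both pass through $\sum_{k=1-j}^{n-j}|c_k|^\alpha$.
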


\noindent{\em Proof of Corollary \ref{Alphaleqone}}
First notice that $\sum_j |c_j| < +\infty$ so that $A$ is defined.
Proceeding like in the proof of Corollary \ref{ThADR} we obtain
\[\begin{split}
& \sum_{j=-\infty}^{-j_n} \frac{\big|d_{n,j}\big|^{\alpha}}{a_n^{\alpha}}
h\Big( \frac{a_n}{\big|d_{n,j}\big|}\Big)  \\
& \hspace{15mm}=
 \frac{1}{n}\sum_{j=-\infty}^{-j_n} \Big|\sum_{k=1-j}^{n-j}c_k\Big|^{\alpha}
\frac{n h(a_n)}{a_n^{\alpha}}
\frac{1}{h(a_n)} h\Big( \frac{a_n}{\big|d_{n,j}\big|}\Big)\\
&\hspace{15mm} \leq K\cdot M \ \frac{1}{n} \sum_{j=-\infty}^{-j_n} \sum_{k=1-j}^{n-j}
|c_k|^{\alpha} \to 0,
\end{split}
\]
where the convergence to $0$ holds by Lemma \ref{L4}.

\begin{rem}\label{Rem55}
As mentioned before, the above corollary covers the important case when
$ h(x) \to C > 0$, as $x\to\infty$,
i.e. when the law of $Y_i$ is in the domain of {\em strict (or normal)} attraction. Many other examples
can be produced using Karamata's representation of slowly varying functions.
Assumption (\ref{hboundone}) is much in the spirit of Lemma A.4 in \cite{MS00}.
Our final result goes in different direction.
\end{rem}

\begin{rem}\label{Rem6}
Notice that if $\alpha <1$, then $\sum_j |c_j|^{\alpha} h(|c_j|^{-1}) < +\infty$, with $h$ slowly varying,  automatically
implies $\sum_j |c_j| < +\infty$.
\end{rem}

\begin{cor}\label{Corfin} Under the usual conditions, if
$\alpha < 1$, then
\[ S_n(t) \indees A\cdot Z(t),\]
if
\[ \sum_{j\in\GZ} |c_j|^{\alpha} < +\infty,\]
and the coefficients $c_j$ are regular in a very weak sense:
there exists a constant $0 < \gamma <\alpha$ such that
\begin{align}\label{ajemaxone}
 \frac{\max_{j+ 1 \leq k \leq j+n} |c_k|^{\frac{(1-\alpha)(\alpha -\gamma)}{(1 - \alpha + \gamma)}}}{\sum_{k = j+1}^{j+n} |c_k|^{\alpha}} &\leq K_+< +\infty, \ j \geq 0.\\
\frac{\max_{j-n \leq k\leq j-1} |c_k|^{\frac{(1-\alpha)(\alpha -\gamma)}{(1 - \alpha + \gamma)}}}{\sum_{k= j -n}^{j-1} |c_k|^{\alpha}} &\leq K_- < +\infty, \ j \leq 0.\label{ajemaxtwo}
\end{align}
(with the convention that $0/0 \equiv 1$.)
\end{cor}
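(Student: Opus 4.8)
\noindent\emph{Proof of Corollary~\ref{Corfin} (outline).}

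The plan is to deduce the corollary from the finite-dimensional criterion of Theorem~\ref{Th1} together with the functional upgrade of Theorem~\ref{ThMain}, following the pattern of Corollaries~\ref{ThADR} and~\ref{Alphaleqone}. Since $\alpha<1$ and $\sum_j|c_j|^{\alpha}<+\infty$, Remark~\ref{Rem6} gives $\sum_j|c_j|<+\infty$, so $A=\sum_j c_j$ is well defined and it suffices to establish $T_n(t)\infdd A_{|\cdot|}\cdot Z(t)$; Theorem~\ref{ThMain} then yields $S_n\indees A\cdot Z$. By Theorem~\ref{Th1} applied to the non-negative coefficients $|c_j|$, and by the reduction leading to~(\ref{ajcondfddsimp}), this convergence will follow once I prove the two limits in~(\ref{ajcondfddsimp}) with $d_{n,j}$ replaced by $\widetilde d_{n,j}:=\sum_{k=1-j}^{n-j}|c_k|$. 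I treat the first sum, over $j\le -j_n$; the second is symmetric. For such $j$ the window $[1-j,\,n-j]$ lies in $\{k\ge 1+j_n\}$, so $\widetilde d_{n,j}\le\sum_{k\ge 1+j_n}|c_k|\to 0$ uniformly in $j$, the analogue of~(\ref{ajT3e0}).

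First I would dispose of the slowly varying factor. As in the proof of Corollary~\ref{ThADR}, the Karamata representation of $h$ gives, for the given $\gamma\in(0,\alpha)$ and all $y\ge x\ge L$, a bound $h(y)/h(x)\le C_\gamma (y/x)^{\gamma}$. Taking $x=a_n$ and $y=a_n/\widetilde d_{n,j}\ge a_n$ (legitimate for all large $n$, since $\widetilde d_{n,j}\le 1$ and $a_n\to\infty$), and using $K:=\sup_n n a_n^{-\alpha}h(a_n)<+\infty$ from~(\ref{aje6}), I obtain
\[
\frac{\widetilde d_{n,j}^{\,\alpha}}{a_n^{\alpha}}\,h\Big(\frac{a_n}{\widetilde d_{n,j}}\Big)
=\widetilde d_{n,j}^{\,\alpha}\,\frac{h(a_n)}{a_n^{\alpha}}\,\frac{h(a_n/\widetilde d_{n,j})}{h(a_n)}
\le C_\gamma\,\frac{h(a_n)}{a_n^{\alpha}}\,\widetilde d_{n,j}^{\,\alpha-\gamma}
\le \frac{C_\gamma K}{n}\,\widetilde d_{n,j}^{\,\alpha-\gamma}.
\]
Hence everything reduces to showing $\frac1n\sum_{j\le -j_n}\widetilde d_{n,j}^{\,\alpha-\gamma}\to0$. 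Here the naive subadditivity bound $\widetilde d_{n,j}^{\,\alpha-\gamma}\le\sum_k|c_k|^{\alpha-\gamma}$ is useless, because $\alpha-\gamma<\alpha$ and $\sum_k|c_k|^{\alpha-\gamma}$ need not converge; this is exactly the point at which the regularity conditions must be used.

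The heart of the argument is the following algebraic step. Write $M_{n,j}=\max_{1-j\le k\le n-j}|c_k|$ and $\Sigma_{n,j}=\sum_{k=1-j}^{n-j}|c_k|^{\alpha}$. From $|c_k|=|c_k|^{\alpha}|c_k|^{1-\alpha}\le M_{n,j}^{1-\alpha}|c_k|^{\alpha}$ one gets $\widetilde d_{n,j}\le M_{n,j}^{1-\alpha}\Sigma_{n,j}$, hence
\[
\widetilde d_{n,j}^{\,\alpha-\gamma}\le M_{n,j}^{(1-\alpha)(\alpha-\gamma)}\,\Sigma_{n,j}^{\,\alpha-\gamma}.
\]
The exponent $\rho:=\frac{(1-\alpha)(\alpha-\gamma)}{1-\alpha+\gamma}$ in~(\ref{ajemaxone}) is tuned precisely so that $M_{n,j}^{(1-\alpha)(\alpha-\gamma)}=\big(M_{n,j}^{\rho}\big)^{1-\alpha+\gamma}$, and~(\ref{ajemaxone}) (with its index set to $-j\ge j_n\ge0$) gives $M_{n,j}^{\rho}\le K_+\Sigma_{n,j}$. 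Therefore
\[
\widetilde d_{n,j}^{\,\alpha-\gamma}\le\big(K_+\Sigma_{n,j}\big)^{1-\alpha+\gamma}\,\Sigma_{n,j}^{\,\alpha-\gamma}
=K_+^{\,1-\alpha+\gamma}\,\Sigma_{n,j}^{\,(1-\alpha+\gamma)+(\alpha-\gamma)}
=K_+^{\,1-\alpha+\gamma}\,\Sigma_{n,j},
\]
because the two powers of $\Sigma_{n,j}$ add up to $1$. Summing and applying Lemma~\ref{L4} to $b_k=|c_k|^{\alpha}$ (summable by hypothesis),
\[
\frac1n\sum_{j\le -j_n}\widetilde d_{n,j}^{\,\alpha-\gamma}
\le\frac{K_+^{\,1-\alpha+\gamma}}{n}\sum_{m\ge j_n}\ \sum_{k=m+1}^{m+n}|c_k|^{\alpha}
\le\frac{K_+^{\,1-\alpha+\gamma}}{n}\sum_{m\ge 0}\ \sum_{k=m+1}^{m+n}|c_k|^{\alpha}\to0.
\]

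The second limit in~(\ref{ajcondfddsimp}), over $j\ge n+j_n$, is entirely analogous: writing $j=n+\ell$ with $\ell\ge j_n$ the window becomes $[1-n-\ell,\,-\ell]\subset\{k\le -j_n\}$, so condition~(\ref{ajemaxtwo}) (with its index set to $1-\ell\le0$) replaces~(\ref{ajemaxone}) and the left-sided analogue of Lemma~\ref{L4} (obtained by reflecting $k\mapsto -k$) closes the estimate. Having verified both parts of~(\ref{ajcondfddsimp}) for $\widetilde d_{n,j}$, Theorem~\ref{Th1} gives $T_n(t)\infdd A_{|\cdot|}\cdot Z(t)$ and Theorem~\ref{ThMain} upgrades this to $S_n\indees A\cdot Z$. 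The only genuinely delicate point is the third paragraph: one must recognise that the exponent $\rho$ in~(\ref{ajemaxone})--(\ref{ajemaxtwo}) is exactly the one for which the elementary bound $\widetilde d_{n,j}\le M_{n,j}^{1-\alpha}\Sigma_{n,j}$ and the regularity bound $M_{n,j}^{\rho}\le K_+\Sigma_{n,j}$ combine to make $\widetilde d_{n,j}^{\,\alpha-\gamma}$ linear in $\Sigma_{n,j}$; once this is seen, Lemma~\ref{L4} finishes the argument and the remaining steps are the routine Karamata estimate and the reductions already established.
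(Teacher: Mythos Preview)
Your proposal is correct and follows essentially the same route as the paper's proof. The paper also reduces to showing that $\frac{1}{n}\sum_{j\le -j_n}\big|\sum_{k=1-j}^{n-j}c_k\big|^{\alpha-\gamma}$ is bounded (in fact $\to 0$), uses the identical elementary inequality $\sum_k|c_k|\le\big(\sum_k|c_k|^{\alpha}\big)\max_k|c_k|^{1-\alpha}$, and observes that the exponent $\rho=\frac{(1-\alpha)(\alpha-\gamma)}{1-\alpha+\gamma}$ makes the regularity hypothesis collapse the bound to a constant times $\Sigma_{n,j}$, after which Lemma~\ref{L4} finishes. The only cosmetic differences are that you work explicitly with $\widetilde d_{n,j}=\sum_k|c_k|$ (making the invocation of Theorem~\ref{ThMain} transparent) and absorb the slowly varying factor via a fixed Potter-type bound $h(y)/h(x)\le C_\gamma(y/x)^{\gamma}$, whereas the paper packages the same estimate as $\Psi_\gamma(a_n,a_n/|d_{n,j}|)\to 0$ times a bounded sum; the algebra and the substance are the same.
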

\begin{rem}
Notice that we always assume that the linear process is well defined. This may require more than demanded in Corollary \ref{Corfin}.
\end{rem}
\noindent {\em Proof of Corollary \ref{Corfin}} As before, we have to check  (\ref{ajcondfddsimp}).

\[\begin{split}
& \sum_{j=-\infty}^{-j_n} \frac{\big|d_{n,j}\big|^{\alpha}}{a_n^{\alpha}}
h\Big( \frac{a_n}{\big|d_{n,j}\big|}\Big)  \\
& \hspace{15mm}=
\frac{1}{n} \sum_{j=-\infty}^{-j_n} \Big|\sum_{k=1-j}^{n-j}c_k\Big|^{\alpha - \gamma}
\frac{n h(a_n)}{a_n^{\alpha}} \big|d_{n,j}\big|^{\gamma}
\frac{1}{h(a_n)} h\Big( \frac{a_n}{\big|d_{n,j}\big|}\Big)\\
&\hspace{15mm} \leq K  \frac{1}{n} \sum_{j=-\infty}^{-j_n}  \Big|\sum_{k=1-j}^{n-j}c_k\Big|^{\alpha - \gamma} \Psi_{\gamma}\Big(a_n, \frac{a_n}{\big|d_{n,j}\big|}\Big),
\end{split}
\]
where $\Psi_{\gamma}(x,y)$ was defined in the proof of Corollary \ref{ThADR} and
\[\sup_{j\leq -j_n} \Psi_{\gamma}\Big(a_n, \frac{a_n}{\big|d_{n,j}\big|}\Big) \to 0, \text{ as $n\to\infty$}.\]
Thus it is enough to prove
\[
\sup_n \frac{1}{n} \sum_{j=-\infty}^{-j_n}  \Big|\sum_{k=1-j}^{n-j}c_k\Big|^{\alpha - \gamma} < +\infty.
\]
We have
\begin{equation}\label{ajekey}
\Big|\sum_{k=1-j}^{n-j}c_k\Big| \leq  \sum_{k=1-j}^{n-j}|c_k|
\leq \Big(\sum_{k=1-j}^{n-j}|c_k|^{\alpha} \Big) \cdot \max_{ 1 - j \leq k \leq n-j}
|c_k|^{1-\alpha},
\end{equation}
hence
\[ \begin{split}
\frac{1}{n} \sum_{j=-\infty}^{-j_n}  \Big|\sum_{k=1-j}^{n-j}c_k\Big|^{\alpha - \gamma} & =
\frac{1}{n} \sum_{j=-\infty}^{-j_n} \Big(\sum_{k=1-j}^{n-j}|c_k|^{\alpha}\Big)
\frac{ \Big|\sum_{k=1-j}^{n-j}c_k\Big|^{\alpha - \gamma}}{\Big(\sum_{k=1-j}^{n-j}|c_k|^{\alpha}\Big)}\\
&\leq \frac{1}{n} \sum_{j=-\infty}^{-j_n} \Big(\sum_{k=1-j}^{n-j}|c_k|^{\alpha}\Big)
\frac{ \max_{ 1 - j \leq k \leq n-j}
|c_k|^{(1-\alpha)(\alpha - \gamma)}}{\Big(\sum_{k=1-j}^{n-j}|c_k|^{\alpha}\Big)^{1 -\alpha + \gamma}} \\
& \leq \big(K_+\big)^{1-\alpha + \gamma} \frac{1}{n} \sum_{j=-\infty}^{-j_n} \Big(\sum_{k=1-j}^{n-j}|c_k|^{\alpha}\Big) \to 0.
\end{split}
\]
This is again more than needed. The proof of
\[\sum_{j=n+ j_n}^{+\infty} \frac{\big|d_{n,j}\big|^{\alpha}}{a_n^{\alpha}}
h\Big( \frac{a_n}{\big|d_{n,j}\big|}\Big) \to 0,\ \text{ as $n\to\infty$.}
\]
goes the same way.

\begin{exa}
If $\alpha < 1$,
\[ |c_j| = \frac{1}{|j|^{1/\alpha} \log^{(1+\varepsilon)/\alpha} |j|},\ |j| \geq 3,\]
and $\{X_i\}$ is well-defined,
then under the usual conditions
\[ S_n(t) \indees A\cdot Z(t).\]
\end{exa}
\label{Sec2}
\begin{rem}\label{Peligrad}
In our considerations we search for conditions giving functional convergence
of $\{S_n(t)\}$ with {\em the same normalization} as $\{Z_n(t)\}$ (by $\{a_n\}$).
It is possible to provide examples of linear processes, which are convergent in the sense of
finite dimensional distribution with different normalization. Moreover, it is likely that also in
the heavy-tailed case one can obtain a complete description of the convergence of linear processes, as it is done by Peligrad and Sang \cite{PeSa13} in the case of innovations belonging to the domain of attraction of a normal distribution. We conjecture that whenever the limit is a stable L\'evy motion our functional approach can be adapted to the more general setting.
\end{rem}

\section{Some complements}\label{seccomp}
\subsection{$S$-continuous functionals}
A phenomenon of self-cancelling oscillations, typical for the $S$ topology, was described in Example \ref{typical}. This example shows that {\em supremum cannot be} continuous in
the $S$ topology. In fact, supremum is lower semi-continuous with respect to $S$, as many other popular functionals - see \cite{J97-EJP}, Corollary 2.10.  On the other hand {\em addition is } sequentially continuous and this property was crucial in consideration given in Section \ref{Sec3}.

Here is another positive example of an $S$-continuous functional.

Let $\mu$ be an atomless measure on $[0,1]$ and let $h : \GR^1 \to \GR^1$
be a continuous function. Consider a smoothing operation $s_{\mu,h}$ on $\GD([0,1])$
given by the formula
\[ s_{\mu,h}(x)(t) = \int_0^t h(x(s))\,d\mu(s).\]
Then $s_{\mu,h}(x)(\cdot)$ is a continuous function on $[0,1]$ and a slight modification
of the proof of Proposition 2.15 in \cite{J97-EJP} shows that the mapping
\[ \Big(\GD([0,1]), S\Big) \ni x \mapsto s_{\mu,h}(x) \in\Big( \GC([0,1]), \|\cdot\|_{\infty}\Big)\]
is continuous. In particular, if we set $\mu = \ell$ (the Lebesgue measure), $h(0) = 0$, $h(x) \geq 0$,
and suppose that $x_n \ines 0$, then
\[ \int_0^1 h(x_n(s))\,ds  \to 0.\]
In the case of linear processes such functionals lead to the following result.

\begin{cor}\label{corappl}
Under the conditions of Corollaries \ref{ThADR},
\ref{Alphabiggerthanone},
\ref{Alphaleqone} or \ref{Corfin} we have for any $\beta > 0$
\[ \frac{1}{n a_n^{\beta}} \sum_{k=1}^n \Big| \sum_{i=1}^k
\big(\sum_j c_{i-j} Y_j\big) - A Y_i\Big|^{\beta} \inprob 0.\]
\end{cor}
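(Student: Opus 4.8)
The plan is to recognize the sum as a continuous functional evaluated on the difference process $W_n := S_n - A\cdot Z_n$, and then to show that this process converges in law to the constant zero function in the $S$ topology. First I would perform an algebraic reduction. Writing $X_i = \sum_j c_{i-j}Y_j$ and using $\sum_{i=1}^k X_i = a_n S_n(k/n)$ and $\sum_{i=1}^k Y_i = a_n Z_n(k/n)$, one obtains $\sum_{i=1}^k (X_i - A Y_i) = a_n\,W_n(k/n)$, so that the $a_n^{\beta}$ cancels and
\[ \frac{1}{n a_n^{\beta}} \sum_{k=1}^n \Big| \sum_{i=1}^k (X_i - A Y_i)\Big|^{\beta} = \frac{1}{n}\sum_{k=1}^n |W_n(k/n)|^{\beta}. \]
Since $W_n$ is constant on each interval $[k/n,(k+1)/n)$ and $W_n(0)=0$, this Riemann sum equals $\int_0^1 |W_n(s)|^{\beta}\,ds + \tfrac1n|W_n(1)|^{\beta}$. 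The boundary term is harmless: by Lemma \ref{L3}(iii) we have $W_n(1)=S_n(1)-A Z_n(1)\inprob 0$ (inside math mode), whence $\tfrac1n|W_n(1)|^{\beta}$ converges to $0$ in probability. It therefore remains to prove $\int_0^1 |W_n(s)|^{\beta}\,ds\inprob 0$.

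Second, I would establish $W_n \indees 0$ by means of Proposition \ref{Prop3}. The finite-dimensional part is free: Lemma \ref{L3}(iii) gives $W_n(t)=S_n(t)-A Z_n(t)\inprob 0$ for every $t\in[0,1]$, so for any finite $\{q_1<\dots<q_m\}$ the vector $(W_n(q_1),\dots,W_n(q_m))$ tends to $0$ in probability, hence in law, matching the finite-dimensional distributions of the zero process. For uniform $S$-tightness I would decompose $W_n = T_n^+ - T_n^- - A Z_n$. Both $\{T_n^+\}$ and $\{T_n^-\}$ are uniformly $S$-tight by the proof of Theorem \ref{ThMain}, while $\{A Z_n\}$ is uniformly $J_1$-tight (because $Z_n\indist Z$ on $\GD([0,1])$ in the $J_1$ topology, which is metrizable, so the weakly convergent sequence is tight) and hence uniformly $S$-tight by Theorem \ref{S-weaker-M1}. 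Using the subadditivity $\|x-y-z\|_{\infty}\le\|x\|_{\infty}+\|y\|_{\infty}+\|z\|_{\infty}$ and $N_{\eta}(x-y-z)\le N_{\eta/3}(x)+N_{\eta/3}(y)+N_{\eta/3}(z)$ together with $N_{\eta}(-x)=N_{\eta}(x)$, Proposition \ref{PropT} yields that $\{\|W_n\|_{\infty}\}$ and $\{N_{\eta}(W_n)\}$ are uniformly tight, i.e. $\{W_n\}$ is uniformly $S$-tight. Proposition \ref{Prop3} then gives $W_n \indees 0$.

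Third, I would invoke the $S$-continuous smoothing functional discussed just before this corollary. Taking $\mu=\ell$ and $h(x)=|x|^{\beta}$ (continuous, with $h(0)=0$, $h\ge 0$), the map $x\mapsto s_{\ell,h}(x)$ is continuous from $(\GD([0,1]),S)$ into $(\GC([0,1]),\|\cdot\|_{\infty})$; composing with the $\|\cdot\|_{\infty}$-continuous evaluation at $t=1$ shows that $g(x):=\int_0^1|x(s)|^{\beta}\,ds$ is $S$-continuous with $g(0)=0$. Hence $W_n \indees 0$ forces $g(W_n)=\int_0^1|W_n(s)|^{\beta}\,ds\indist 0$, i.e. it converges to $0$ in probability. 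Should one prefer to avoid a continuous mapping theorem in the non-metric $S$ topology, the same conclusion follows from the subsequence criterion of Remark \ref{RemKK} combined with the a.s. Skorokhod representation for subsequences available in $S$: along any subsequence one realizes copies $\tilde W_n\ines 0$ almost surely and reads off $g(\tilde W_n)\to 0$ from the stated sequential continuity of $g$ at $0$. Together with the first step this proves the corollary.

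The main obstacle I anticipate is the middle step, namely certifying $W_n \indees 0$, and within it specifically the uniform $S$-tightness. The finite-dimensional convergence comes for free from Lemma \ref{L3}, but in the non-metric $S$ topology one cannot simply appeal to continuity of addition to tighten the three-term combination $T_n^+ - T_n^- - A Z_n$ (cf. Remark \ref{RemSum}); one genuinely has to pass through the quantitative criterion of Proposition \ref{PropT} and the subadditivity of $\|\cdot\|_{\infty}$ and of the oscillation count $N_{\eta}$. By contrast, the algebraic identity of the first step and the continuity of the smoothing functional in the third step are routine given the results already in place.
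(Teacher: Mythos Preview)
Your proposal is correct and follows essentially the same route as the paper: identify the sum as (up to a negligible boundary term) $\int_0^1 |S_n(s)-A Z_n(s)|^{\beta}\,ds$, use Lemma~\ref{L3}(iii) for finite-dimensional convergence to $0$, combine the uniform $S$-tightness of $\{S_n\}$ (equivalently of $T_n^{\pm}$) from the proof of Theorem~\ref{ThMain} with the $J_1$-tightness of $\{A Z_n\}$ to get $S$-tightness of the difference via Proposition~\ref{PropT}, apply Proposition~\ref{Prop3}, and conclude by $S$-continuity of the smoothing functional $s_{\ell,|\cdot|^{\beta}}$. If anything, you are slightly more careful than the paper in isolating the boundary term $\tfrac{1}{n}|W_n(1)|^{\beta}$ and in spelling out the three-term oscillation bound $N_{\eta}(x-y-z)\le N_{\eta/3}(x)+N_{\eta/3}(y)+N_{\eta/3}(z)$.
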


\noindent {\em Proof of Corollary \ref{corappl}} The expression to be analyzed has the form
\[\int_0^1 H_{\beta}\big(S_n(t) - A\cdot Z_n(t)\big)\,dt,\]
where $H_{\beta}(x) = |x|^{\beta}$ and by
(\ref{ajeonedim})
\[ S_n(t) - A\cdot Z_n(t) \infdd 0.\]
We have checked in the course of the proof of Theorem \ref{ThMain}, that
$\{S_n\}$ is uniformly $S$-tight.  By (\ref{aje3}) $\{A\cdot Z_n\}$ is uniformly
$J_1$-tight, hence also $S$-tight. Similarly as in the proof of Proposition \ref{PropFin}
we deduce that $\{S_n - A\cdot Z_n\}$ is uniformly $S$-tight. Now an application of
Proposition \ref{Prop3} gives
\[ S_n - A\cdot Z_n \indist 0,\]
on the Skorokhod space $\GD([0,1])$ equipped with the $S$ topology.

\subsection{An example related to convergence in the $M_1$ topology}
\label{seccompl}
In Introduction we provided an example of a linear process ($c_0=1, c_1 = -1$) for which no
Skorokhod's convergence is possible. In this example $A=0$ and the limit is degenerate, what might suggest that another, more appropriate norming is applicable, under which the phenomenon disappears. Here we give an example with a {\em non-degenerate limit} showing that in the general case $M_1$-convergence need not hold.

\begin{exa}
Let $c_0 =\zeta   > -c_1 = \xi >0$. Then $X_j = \zeta Y_j - \xi Y_{j-1}$ and defining $Z_n(t)$ by (\ref{aje3}) we obtain for $t\in [k/n, (k+1)/n)$
\[ S_n(t) = \frac{1}{a_n} \sum_{j=1}^{k} X_j = \frac{1}{a_n}  \big(\zeta Y_{k} - \xi Y_0\big)
+ \big(\zeta - \xi) Z_n((k-1)/n).\]
Clearly,  the f.d.d. limit $\{\big(\zeta - \xi) Z(t)\}$ is non-degenerate. We will show that the sequence $\{S_n(t)\}$ is not uniformly $M_1$-tight and so cannot converge to  $\{  \big(\zeta - \xi) Z(t)\}$ in the $M_1$ topology.

For the sake of simplicity let us assume that $Y_j$'s are non-negative and
\[ \bP\big( Y_1 > x\big) = x^{-\alpha}, \ x \geq 1,\]
with $\alpha < 1$. Then we can choose $a_n = n^{1/\alpha}$. Consider sets
\[ G_n = \bigcup_{j=0}^{n-1} \big\{ Y_j > \varepsilon_n a_n, Y_{j+1} > \varepsilon_n a_n\big\}.  \]
where $\varepsilon_n = n^{-1/(3\alpha)}$.
Then
\[
 \bP\big( G_n\big) \leq (n+1) \bP\big(  Y_i > \varepsilon_n a_n\big)^2 = (n+1) \varepsilon_n^{-2\alpha} \big(n^{1/\alpha}\big)^{-2\alpha} \longrightarrow 0.
\]
Notice that
\begin{equation}\label{equwaga}
\text{on $G_n^c$ there are no two consecutive values of $Y_j$ exceeding $\varepsilon_n a_n$.}
\end{equation}
Let us define $Y_{n,j} = Y_j    \I \{ Y_j > \varepsilon_n a_n\}$ and set for $t\in [k/n, (k+1)/n)$
\[ \widetilde{S}_n(t) =  \frac{1}{a_n}  \big(\zeta Y_{n,k} - \xi Y_{n,0}\big)
+ \frac{\zeta - \xi}{a_n} \sum_{j=1}^{k-1} Y_{n,j}.\]
 We have by (\ref{ajfolkexp})
\begin{align*}
 \bE\big[\sup_{t\in [0,1]} \big| S_n(t) - \widetilde{S}_n(t)\big|\big] &\leq \frac{\zeta}{a_n}\sum_{j=0}^n
\bE\big[ Y_j \I\{ Y_j \leq \varepsilon_n a_n\}\big] \\
&\leq C_1 \zeta \frac{(n+1) (\varepsilon_n a_n)^{1 - \alpha}}{a_n} \to 0.
\end{align*}
It follows that $\{S_n(t)\}$ are uniformly $M_1$-tight if, and only if, $\{\widetilde{S}_n(t)\}$ are.
Let $w^{M_1}(x,\delta)$ be given by (\ref{eqomega}). Since $\bP\big( G_n^c\big) \to 1$ we have for any $\delta > 0$ and $\eta > 0$
\[ \limsup_n \bP\big( w^{M_1}(\widetilde{S}_n(\cdot),\delta) > \eta \big) =  \limsup_n \bP\big( \{w^{M_1}(\widetilde{S}_n(\cdot),\delta) > \eta\}\cap G_n^c \big).\]
And on $G_n^c$, by the property (\ref{equwaga}) and if $2/n <\delta $  we have
\[ \omega(\widetilde{S}_n(\cdot),\delta) \geq \frac{1}{a_n} \big(\zeta - \xi\big) \max_j Y_{n,j}.\]
If $\eta/ (\zeta - \xi) > \varepsilon_n$, then
\begin{align*}
\bP\big((1/a_n) &\max_j Y_{n,j} > \eta/ \big(\zeta - \xi\big)\big)  \\
& =
\bP\big((1/a_n) \max_j Y_j > \eta/ \big(\zeta - \xi\big)\big) \\
& \longrightarrow 1 -
\exp\big( - \big((\zeta - \xi)/\eta\big)^{\alpha}\big) = \theta> 0.
\end{align*}
Hence for each $\delta > 0$
\[ \liminf_n \bP\big( w^{M_1}(\widetilde{S}_n(\cdot),\delta) > \eta \big) \geq \theta > 0,\]
and the sequence $\{\widetilde{S}_n(t)\}$ cannot be uniformly $M_1$-tight.
\end{exa}

\subsection{Linear space of convergent linear processes}
We can explore the machinery of Section \ref{Sec2} to obtain a natural
\begin{prop}\label{PropSum}
We work under the assumptions of Theorem \ref{Th1}.
Denote by ${\cal C}_Y$ the set of sequences $\{c_i\}_{i\in \GZ}$ such that
if
\[ X_i=\sum_{j \in \GZ}c_j Y_{i-j}, \quad i \in \GZ,\]
then
\[ S_n(t)=\frac{1}{a_n} \sum_{i=1}^{[nt]}X_i \infdd A\cdot Z(t),\]
with $A = \sum_{i\in\GZ} c_i$.

Then ${\cal C}_Y$ is a linear subspace of $\GR^{\GZ}$.
\end{prop}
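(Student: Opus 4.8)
The plan is to verify the three defining properties of a linear subspace: that $\mathcal{C}_Y$ contains the zero sequence and is closed under scalar multiplication and under addition. The zero sequence yields $X_i\equiv 0$, $S_n\equiv 0$ and $A=0$, so it lies in $\mathcal{C}_Y$ trivially. If $\{c_i\}\in\mathcal{C}_Y$ and $\lambda\in\GR^1$, then the process attached to $\{\lambda c_i\}$ is $\lambda X_i$, its partial-sum process is $\lambda S_n(t)$, and $\lambda S_n(t)\infdd \lambda A\cdot Z(t)$ with $\lambda A=\sum_i\lambda c_i$; summability is preserved, since $\sum_i|\lambda c_i|=|\lambda|\sum_i|c_i|<+\infty$. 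Thus closure under scalar multiplication is immediate, and the whole difficulty is concentrated in closure under addition.

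The naive attempt, namely to add the two convergences $S_n\infdd A\cdot Z$ and $S_n'\infdd A'\cdot Z$, fails: finite-dimensional convergence in law is not additive (no joint convergence is available), and even if it were, the two limits are driven by the \emph{same} motion $Z$, so one cannot simply add the laws. This is the main obstacle, and the key device that removes it is to replace membership in $\mathcal{C}_Y$ by the equivalent, \emph{linearly additive} condition furnished by Lemma \ref{L3}(iii). Concretely, I would first record that, under the assumptions of Theorem \ref{Th1}, $\{c_i\}\in\mathcal{C}_Y$ if and only if
\[ S_n(t) - A\cdot Z_n(t) \inprob 0 \quad\text{for every } t\in[0,1]. \]
The forward implication is exactly (i)$\Rightarrow$(iii) of Lemma \ref{L3}, since membership forces the one-dimensional marginal $S_n(1)\indist A\cdot Z(1)$. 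The reverse implication follows from (\ref{aje3}): a stable L\'evy motion has no fixed times of discontinuity, so $J_1$-convergence $Z_n\indist Z$ yields $Z_n\infdd Z$; consequently, as $(S_n(t_1),\ldots,S_n(t_m))$ differs from $A\cdot(Z_n(t_1),\ldots,Z_n(t_m))$ by a vector that tends to $0$ in probability, while the latter converges in law to $(A\cdot Z(t_1),\ldots,A\cdot Z(t_m))$, the former converges to the same limit, i.e. $S_n\infdd A\cdot Z$.

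With this reformulation in hand, closure under addition is routine. Given $\{c_i\},\{c_i'\}\in\mathcal{C}_Y$, the sequence $\{c_i+c_i'\}$ is summable, and its associated process is well defined, being the a.s.\ sum $X_i+X_i'$ of the two convergent series; hence its partial-sum process is $S_n(t)+S_n'(t)$. Writing $\tilde A=A+A'=\sum_i(c_i+c_i')$ and adding the two probability statements,
\begin{align*}
\big(S_n(t)+S_n'(t)\big) - \tilde A\cdot Z_n(t)
&= \big(S_n(t)-A\cdot Z_n(t)\big) + \big(S_n'(t)-A'\cdot Z_n(t)\big)\\
&\inprob 0,
\end{align*}
for every $t\in[0,1]$, since convergence in probability to $0$ is preserved under sums. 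By the reverse direction of the reformulation this is precisely $\{c_i+c_i'\}\in\mathcal{C}_Y$. The only genuine analysis lies in the equivalence of the second paragraph; once it is established, the subspace property is a formal consequence of the linearity of the map $\{c_i\}\mapsto X_i$ together with the linearity of convergence in probability.
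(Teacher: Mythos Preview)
Your proof is correct, but it follows a different path from the paper's. The paper argues closure under addition by invoking the tail-sum criterion (\ref{ajcondfdd}) of Theorem \ref{Th1} directly: writing $d_{n,j}',d_{n,j}''$ for the partial sums of the two coefficient sequences, one has
\[
\bP\big(|d_{n,j}'+d_{n,j}''|\,|Y_j|>a_n\big)\le \bP\big(|d_{n,j}'|\,|Y_j|>a_n/2\big)+\bP\big(|d_{n,j}''|\,|Y_j|>a_n/2\big),
\]
and then Remark \ref{RemConst} (the ``$r$-form'' of the criterion) shows that each sum on the right tends to $0$. Your route instead exploits the equivalence (i)$\Leftrightarrow$(iii) of Lemma \ref{L3}, replacing membership in $\mathcal{C}_Y$ by the statement $S_n(t)-A\cdot Z_n(t)\inprob 0$, which is linear in $\{c_i\}$ on the nose. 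This is arguably more direct: it sidesteps both the probability-splitting inequality and the appeal to Remark \ref{RemConst}. The paper's approach, on the other hand, stays at the level of the quantitative tail condition (\ref{ajcondfdd}), which is the form most useful for checking concrete examples; either argument is perfectly adequate here.
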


\noindent{\em Proof of Proposition \ref{PropSum}} Closeness of ${\cal C}_Y$ under multiplication
by a number is obvious. So let us assume that $\{c_i'\}$ and $\{c_i''\}$ are elements of ${\cal C}_Y$.
By Theorem \ref{Th1} we have to prove that
\begin{equation}\label{ajesumproof}
\begin{split}
 \sum_{j=-\infty}^0
\bP\Big( \big|\sum_{k=1-j}^{n-j} (c_k' + c_k'') \big| |Y_j| > a_n\Big)
 &\to 0,\ \text{ as $n\to\infty$.}\\
 \sum_{j=n+ 1}^{\infty}
\bP\Big( \big|\sum_{k=1-j}^{n-j}(c_k' + c_k'') \big| |Y_j| > a_n\Big) &\to 0,\ \text{ as $n\to\infty$.}
\end{split}
\end{equation}
But
\[\begin{split}
 \sum_{j=-\infty}^0
\bP\Big( \big|&\sum_{k=1-j}^{n-j} (c_k' + c_k'') \big| |Y_j| > a_n\Big) \\
&\leq  \sum_{j=-\infty}^0
\bP\Big( \big|\sum_{k=1-j}^{n-j} c_k'\big| |Y_j| + \big|\sum_{k=1-j}^{n-j} c_k'' \big| |Y_j| > a_n\Big) \\
& \leq \sum_{j=-\infty}^0 \bP\Big( \big|\sum_{k=1-j}^{n-j} c_k'\big| |Y_j| > a_n/2\Big) + \sum_{j=-\infty}^0 \bP\Big(\big|\sum_{k=1-j}^{n-j} c_k'' \big| |Y_j| > a_n/2\Big).
\end{split}
\]
Now both terms tend to $0$ by Remark \ref{RemConst}. Identical reasoning can be used in the proof of the ``dual" condition in (\ref{ajesumproof}).

\subsection{Dependent innovations}
In the main results of the paper we studied only {\em independent} innovations $\{Y_j\}$.
It is however clear that the functional $S$-convergence can be obtained under much weaker assumptions. In order to apply crucial Proposition \ref{PropFin} we need only that
\[ S_n(t)  \infdd A\cdot Z(t),\]
and that
\[ T_n^+ \indist A_+\cdot Z, \ \text{ and }\  T_n^- \indist A_-\cdot Z, \]
on the Skorokhod space $\GD([0,1])$ equipped with the  $M_1$ topology. For the latter relations
Theorem 1 of \cite{LoRi11} seems to be an ideal tool for associated sequences (see our Proposition \ref{PropLR}). A variety of potential other possible examples is given in
\cite{MTK10}.

\section{Appendix}

We provide two results of a technical character. The first one is well-known (\cite{Ast83}) and is stated here for completeness. Proposition \ref{Prop1} might be of independent interest.
\begin{prop}\label{Prop0}
Let $\{Y_j\}$ be an i.i.d.sequence satisfying (\ref{aje4}), (\ref{aje7}) and (\ref{aje8}) and let $\{c_j\}$ be a sequence of numbers. Then the series $ \sum_{j\in\GZ} c_{j} Y_j$ is well defined if, and only if,
\begin{equation}\label{aje2a0}
\sum_{j\in\GZ} |c_{j}|^{\alpha} h(|c_{j}|^{-1}) < +\infty.
\end{equation}
\end{prop}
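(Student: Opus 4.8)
The plan is to reduce the convergence of the series $\sum_{j\in\GZ} c_j Y_j$ to an application of the Kolmogorov Three Series Theorem applied to the independent (but not identically distributed) summands $c_j Y_j$, and then to translate each of the three series conditions into the single tail condition (\ref{aje2a0}). First I would fix a truncation level, say $1$, and introduce the truncated variables $c_j Y_j \I\{|c_j Y_j| \leq 1\}$. The Three Series Theorem tells us that $\sum_j c_j Y_j$ converges $\bP$-a.s.\ if and only if the three series
\[
\sum_j \bP(|c_j Y_j| > 1), \qquad \sum_j \bE\big[c_j Y_j \I\{|c_j Y_j| \leq 1\}\big], \qquad \sum_j \bV\big[c_j Y_j \I\{|c_j Y_j| \leq 1\}\big]
\]
all converge.

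The first series is the heart of the matter. Since $\bP(|c_j Y_j| > 1) = \bP(|Y_j| > |c_j|^{-1}) = |c_j|^{\alpha} h(|c_j|^{-1})$ by the regular-variation hypothesis (\ref{aje4}), its convergence is \emph{exactly} condition (\ref{aje2a0}). So the key point to establish is that, under (\ref{aje2a0}), the other two series converge automatically, and conversely that $\bP$-a.s.\ convergence forces (\ref{aje2a0}). The converse direction is immediate: a.s.\ convergence implies $c_j Y_j \to 0$, and by the Borel--Cantelli lemma (using independence, the second Borel--Cantelli) the convergence of $\sum_j \bP(|c_j Y_j| > 1)$ is necessary, which is (\ref{aje2a0}). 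For the forward direction I would show that (\ref{aje2a0}) implies the convergence of the variance and mean series.

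The main obstacle, and where the regularity conditions (\ref{aje7}) and (\ref{aje8}) enter, is controlling the truncated second moment and the centering term via Karamata-type estimates. For the variance series one uses the standard fact that for a regularly varying tail of index $\alpha \in (0,2)$, the truncated second moment satisfies $\bE\big[Y^2 \I\{|Y| \leq x\}\big] \sim \frac{\alpha}{2-\alpha} x^{2-\alpha} h(x)$ as $x \to \infty$; applying this with $x = |c_j|^{-1}$ gives $\bV[c_j Y_j \I\{\cdots\}] \lesssim c_j^2 \cdot |c_j|^{-(2-\alpha)} h(|c_j|^{-1}) = |c_j|^{\alpha} h(|c_j|^{-1})$ (up to a constant and for the small-$|c_j|$ terms), so this series is dominated by (\ref{aje2a0}). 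The finitely many large-coefficient terms are harmless. The truncated mean requires case analysis: for $\alpha > 1$ the centering is handled using $\bE Y_j = 0$ so that $\bE[c_j Y_j \I\{|c_j Y_j| \leq 1\}] = -\bE[c_j Y_j \I\{|c_j Y_j| > 1\}]$, which is $O(|c_j|^{\alpha} h(|c_j|^{-1}))$ by Karamata; for $\alpha = 1$ symmetry (\ref{aje8}) makes each truncated mean vanish; for $\alpha < 1$ the truncated first moment $\bE[|Y|\I\{|Y|\leq x\}] \sim \frac{\alpha}{1-\alpha} x^{1-\alpha} h(x)$ yields the bound $|c_j| \cdot |c_j|^{-(1-\alpha)} h(|c_j|^{-1}) = |c_j|^{\alpha} h(|c_j|^{-1})$ directly. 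In every case the mean and variance series are controlled by the first series, so (\ref{aje2a0}) is both necessary and sufficient, completing the proof.
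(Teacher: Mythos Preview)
Your proposal is correct and follows essentially the same route as the paper: both reduce the question to the Kolmogorov Three Series Theorem at truncation level $1$, identify the first series with (\ref{aje2a0}) directly from (\ref{aje4}), and bound the variance and truncated-mean series by Karamata-type estimates with the same three-case analysis on $\alpha$ using (\ref{aje7}) and (\ref{aje8}). The only cosmetic difference is that for the necessity direction you invoke Borel--Cantelli explicitly, whereas the paper simply appeals to the necessity half of the Three Series Theorem; these are of course the same thing.
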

\begin{prop}\label{Prop1}
Let $\{Y_j\}$ be an i.i.d.sequence satisfying (\ref{aje4}), (\ref{aje7}) and (\ref{aje8}).
Consider an array $\{c_{n,j}\,;\, n\in \GN, j\in\GZ\}$ of numbers such that for each $n\in \GN$
\begin{equation}\label{aje2a}
\sum_{j\in\GZ} |c_{n,j}|^{\alpha} h(|c_{n,j}|^{-1}) < +\infty.
\end{equation}
Set $V_n = \sum_{j\in\GZ} c_{n,j} Y_j,\ n\in\GN$. Then
\begin{equation}\label{ajvntozero}
V_n \inprob 0
\end{equation}
 if, and only if,
\begin{equation}\label{ajseriestozero}
\sum_{j\in\GZ} |c_{n,j}|^{\alpha} h(|c_{n,j}|^{-1}) \to 0,\ \text{ as $n \to \infty$ }.
\end{equation}
\end{prop}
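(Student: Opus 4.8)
The plan is to regard $V_n=\sum_{j}\xi_{n,j}$ with $\xi_{n,j}=c_{n,j}Y_j$ as a row-wise independent triangular array and to run the Kolmogorov Three Series Theorem with truncation level $1$. The starting observation is that, by (\ref{aje4}), for $c\neq 0$ one has $\bP(|cY_j|>1)=\bP(|Y_j|>1/|c|)=|c|^{\alpha}h(1/|c|)$, so the series in (\ref{ajseriestozero}) is exactly $\sum_j\bP(|\xi_{n,j}|>1)$, i.e. the ``probability'' series of the three-series conditions. Moreover $\sum_j|c_{n,j}|^{\alpha}h(|c_{n,j}|^{-1})\to 0$ forces $\sup_j|c_{n,j}|^{\alpha}h(|c_{n,j}|^{-1})\to 0$, and since $x\mapsto x^{\alpha}h(1/x)$ tends to $0$ only as $x\to 0$, this gives $\sup_j|c_{n,j}|\to 0$. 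Thus for large $n$ every coefficient lies in the range where Karamata's theorem controls truncated moments uniformly, and this uniformity is what makes the argument go through.

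For sufficiency, assume (\ref{ajseriestozero}) and truncate, $\xi'_{n,j}=\xi_{n,j}\mathbf{1}(|\xi_{n,j}|\le 1)$. The event $\{\sum_j\xi_{n,j}\neq\sum_j\xi'_{n,j}\}$ has probability at most $\sum_j\bP(|\xi_{n,j}|>1)\to 0$, so it suffices that $\sum_j\xi'_{n,j}\inprob 0$. When $\alpha<1$ I would simply estimate $\bE|\sum_j\xi'_{n,j}|\le\sum_j\bE[|\xi_{n,j}|\mathbf{1}(|\xi_{n,j}|\le 1)]$ and invoke Karamata's bound $\bE[|cY|\mathbf{1}(|cY|\le 1)]\le K\,\bP(|cY|>1)$ (valid uniformly for small $|c|$ when $\alpha<1$), dominating the whole sum by $K\sum_j|c_{n,j}|^{\alpha}h(|c_{n,j}|^{-1})\to 0$, hence $L^1$- and in particular $\inprob$-convergence. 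When $\alpha\ge 1$ I would split into variance and centering: $\mathrm{Var}(\sum_j\xi'_{n,j})\le\sum_j\bE[(\xi'_{n,j})^2]\le K\sum_j\bP(|\xi_{n,j}|>1)\to 0$ by Karamata's estimate for the truncated second moment (finite since $\alpha<2$), while the centering series $\sum_j\bE[\xi'_{n,j}]$ vanishes identically for $\alpha=1$ by the symmetry assumption (\ref{aje8}) and, for $\alpha>1$, is bounded by $K\sum_j\bP(|\xi_{n,j}|>1)\to 0$ using $\bE Y_j=0$ together with Karamata's estimate for $\bE[|Y_j|\mathbf{1}(|Y_j|>1/|c|)]$. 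In all cases $\sum_j\xi'_{n,j}\inprob 0$, proving (\ref{ajvntozero}).

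For necessity I would symmetrize: with $Y_j^{s}=Y_j-Y_j'$ an independent symmetrized copy and $V_n^{s}=\sum_jc_{n,j}Y_j^{s}$, convergence $V_n\inprob 0$ implies $V_n^{s}\inprob 0$. For symmetric independent summands Lévy's maximal inequality, combined with $V_n^{s}\inprob 0$, gives $\bP(\max_j|c_{n,j}Y_j^{s}|>1)\to 0$; writing $p_{n,j}=\bP(|c_{n,j}Y_j^{s}|>1)$ and using row-independence, $1-\prod_j(1-p_{n,j})\to 0$, whence $\sum_jp_{n,j}\le-\sum_j\log(1-p_{n,j})\to 0$ and in particular $\sup_j|c_{n,j}|\to 0$ again. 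Since $Y^{s}$ has regularly varying tails of the same index $\alpha$, the uniform convergence theorem for regularly varying functions yields $|c_{n,j}|^{\alpha}h(|c_{n,j}|^{-1})\le C\,p_{n,j}$ uniformly over these (now small) coefficients, so $\sum_j|c_{n,j}|^{\alpha}h(|c_{n,j}|^{-1})\to 0$, which is exactly (\ref{ajseriestozero}).

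The routine part is sufficiency: once the uniform Karamata bounds for the truncated first and second moments are recorded, the three-series verification is mechanical, the only nuisance being the separate treatment of $\alpha<1$, $\alpha=1$ and $\alpha>1$. The genuinely delicate step, and the one I expect to be the main obstacle, is necessity --- extracting smallness of the tail series from mere convergence in probability of the sum. The symmetrization-plus-Lévy-inequality device is what supplies the required lower bound, and the remaining subtlety is transferring the regular variation from $Y$ to $Y^{s}$ and matching $p_{n,j}$ with $|c_{n,j}|^{\alpha}h(|c_{n,j}|^{-1})$ uniformly in $j$, which rests on $\sup_j|c_{n,j}|\to 0$ and the uniform convergence theorem for slowly varying functions.
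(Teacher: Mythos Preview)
Your sufficiency argument is essentially the paper's: both truncate at level $1$ and control the three series via Karamata-type bounds on truncated moments (the paper packages these as Lemma~\ref{lemfolk}). The only difference is cosmetic --- you split off the case $\alpha<1$ and use an $L^1$ bound there, whereas the paper uses the variance bound $\bE\big[(c_{n,j}Y_j)^2\GI(|c_{n,j}Y_j|\le 1)\big]\le C_2|c_{n,j}|^{\alpha}h(|c_{n,j}|^{-1})$ uniformly over $\alpha\in(0,2)$ and thus avoids the case distinction.

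For necessity your route is genuinely different, and more elementary. The paper symmetrizes, uses a characteristic-function argument (via the L\'evy--Ottaviani inequality applied to a single extracted term) to obtain $\sup_j|c_{n,j}|\to 0$, then truncates the index set to a finite infinitesimal array and invokes the \emph{general central limit theorem} (Kallenberg, Theorem~5.15) to conclude $\sum_{|j|\le k_n}\bP(|c_{n,j}Y_j|>1)\to 0$. You instead apply L\'evy's inequality in its maximal form for individual summands, $\bP(\max_j|c_{n,j}Y_j^{s}|>1)\le 2\,\bP(|V_n^{s}|\ge 1)\to 0$, turn this into $\sum_j p_{n,j}\to 0$ via independence, and close by the tail comparison $\bP(|Y|>x)\le C\,\bP(|Y-Y'|>x)$ for large $x$. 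This last comparison is the only extra ingredient you need; it follows from regular variation of $|Y|$ alone (no balance condition required: $\bP(Y-Y'>x)\ge \bP(|Y|>x+M)\,\bP(|Y'|\le M)$ for any $M$ with $\bP(|Y|\le M)>0$). What you gain is that the general CLT --- a comparatively heavy structural theorem --- is replaced by a one-line maximal inequality; what the paper's approach gains is that it never has to discuss the tail of the symmetrized variable.
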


In the proofs we shall need some estimates which seem to be a part of the probabilistic folklore.
\begin{lem} \label{lemfolk}
Assume that
\[\bP\big( |Y| > x\big) = x^{-\alpha} h(x),\]
 where $h(x)$ is slowly
varying at $x = \infty$.
\begin{description}
\item{\bf (i)} If  $\alpha \in (0,2)$, then there exists a constant $C_2$, depending on $\alpha$ and the law of $Y$ such that
\begin{equation}\label{ajfolkvar}
 \bE\big[Y^2 \GI\big(|Y| \leq x\big)\big] \leq
C_2 x^{2-\alpha} h(x),\ x > 0.
\end{equation}
\item{\bf (ii)} If $\alpha \in (0,1)$, then there exists a constant $C_1$, depending on $\alpha$ and the law of $Y$ such that
\begin{equation}\label{ajfolkexp}
 \bE \big[|Y| \GI\big(|Y| \leq x\big)\big] \leq  C_1 x^{1-\alpha} h(x), \ x >0.
\end{equation}
\item{\bf (iii)} If $\alpha \in (1,2)$, then there is $x_0 > 0$, depending on the law of $Y$,
such that
\begin{equation}\label{ajfolktailexp}
\bE \big[|Y| \GI \big(|Y| > x \big)\big] \leq   \bE \big[|Y|\, \GI\big( x \leq x_0\big)\big] +
\frac{2 \alpha}{\alpha - 1} x^{1-\alpha} h(x), \ x > 0.
\end{equation}
\end{description}
\end{lem}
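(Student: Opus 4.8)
The plan is to reduce each of the three truncated moments to an integral of the tail function $F(t) = \bP(|Y| > t) = t^{-\alpha} h(t)$ and then invoke Karamata's theorem on integrals of regularly varying functions (see \cite{BGT87}). A layer-cake computation (equivalently, integration by parts against the law of $|Y|$) gives the exact identities
\[ \bE\big[Y^2 \GI(|Y| \le x)\big] = 2\int_0^x t F(t)\,dt - x^2 F(x), \qquad \bE\big[|Y| \GI(|Y| \le x)\big] = \int_0^x F(t)\,dt - x F(x), \]
and, for $\alpha > 1$ so that the integral at infinity converges,
\[ \bE\big[|Y| \GI(|Y| > x)\big] = x F(x) + \int_x^\infty F(t)\,dt = x^{1-\alpha} h(x) + \int_x^\infty t^{-\alpha} h(t)\,dt. \]
Dropping the nonnegative subtracted terms in the first two identities, it suffices to estimate $2\int_0^x t^{1-\alpha} h(t)\,dt$ in (i), $\int_0^x t^{-\alpha} h(t)\,dt$ in (ii), and the tail integral above in (iii).

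Next I would apply Karamata. Since $1-\alpha > -1$ when $\alpha < 2$ and $-\alpha > -1$ when $\alpha < 1$, the theorem applies to the integrals at $0$ and yields $\int_0^x t^{1-\alpha} h(t)\,dt \sim (2-\alpha)^{-1} x^{2-\alpha} h(x)$ and $\int_0^x t^{-\alpha} h(t)\,dt \sim (1-\alpha)^{-1} x^{1-\alpha} h(x)$ as $x \to \infty$; for $\alpha > 1$ the integral at infinity satisfies $\int_x^\infty t^{-\alpha} h(t)\,dt \sim (\alpha-1)^{-1} x^{1-\alpha} h(x)$. Inflating the limiting constants slightly, these asymptotics deliver, for all $x$ beyond some threshold, the desired bounds: in particular, in (iii) one gets $\bE[|Y| \GI(|Y| > x)] = x^{1-\alpha} h(x) + \int_x^\infty t^{-\alpha} h(t)\,dt \le \big(1 + \tfrac{\alpha}{\alpha-1}\big) x^{1-\alpha} h(x) \le \tfrac{2\alpha}{\alpha-1} x^{1-\alpha} h(x)$ once $x \ge x_0$.

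The only genuine obstacle is upgrading these large-$x$ estimates to \emph{all} $x > 0$, since Karamata controls only the behaviour at infinity. For part (iii) this is automatic with the right reading of the statement: taking $x_0$ to be the threshold above, for $x < x_0$ the additive term $\bE[|Y|]\,\GI(x \le x_0)$ equals $\bE|Y|$, which already dominates $\bE[|Y| \GI(|Y| > x)]$, so the inequality is trivial there. For (i) and (ii) I would control the ratio directly on the bounded range $(0, x_1]$: using $F \le 1$ one has $\int_0^x t F(t)\,dt \le x^2/2$ and $\int_0^x F(t)\,dt \le x$, while $F$ is nonincreasing and strictly positive, so $F(x) \ge F(x_1) > 0$ for $x \le x_1$. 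Hence $\int_0^x t F(t)\,dt / (x^2 F(x)) \le 1/(2F(x_1))$ and $\int_0^x F(t)\,dt/(x F(x)) \le 1/F(x_1)$ on $(0, x_1]$, so the normalized quantities stay bounded there too. Taking $C_1$ and $C_2$ to be the maxima over the two regimes then finishes the proof. The delicate point is precisely this small-$x$ bookkeeping; the large-$x$ part is a direct appeal to Karamata's theorem.
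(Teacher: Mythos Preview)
Your proof is correct and follows essentially the same route as the paper's: both reduce the truncated moments to integrals of the tail $F(t)=\bP(|Y|>t)$ via the layer-cake formula, apply Karamata's theorem for the asymptotics at infinity, and cover the bounded range $0<x\le x_0$ by combining a crude bound on the numerator with the monotonicity $F(x)\ge F(x_0)>0$. The paper treats (i) and (ii) in one stroke by taking a generic exponent $\beta>\alpha$, and for small $x$ writes $\bE[|Y|^\beta \GI(|Y|\le x)]\le x^\beta = x^{\beta-\alpha}h(x)/F(x)\le x^{\beta-\alpha}h(x)/F(x_0)$, which is exactly your ratio argument in a slightly different guise.
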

\begin{proof}
Take $\beta > \alpha$. Applying the direct half of Karamata's Theorem (Th. 1.5.11 \cite{BGT87})
we obtain
\[ \bE \big[|Y|^{\beta} \GI \big(|Y| \leq x\big)\big] = \beta \int_0^x t^{\beta - 1} \bP\big(|Y| > t\big)\,dt - x^{\beta} \bP\big( |Y| > t\big) \sim \frac{\alpha}{\beta - \alpha} x^{\beta - \alpha} h(x).\]
Hence there exists $x_0$ such that
\[ \bE \big[|Y|^{\beta} \GI \big(|Y| \leq x\big) \big]\leq  \frac{2\alpha}{\beta - \alpha} x^{\beta - \alpha} h(x), \ x > x_0.\]
If $0 < x \leq x_0$, then
\[ \bE \big[|Y|^{\beta} \GI \big(|Y| \leq x\big)\big] \leq x^{\beta} = x^{\beta} \frac{x^{- \alpha} h(x)}{\bP\big(|Y| > x\big)} \leq \frac{1}{\bP\big(|Y| > x_0\big)} x^{\beta -\alpha} h(x).\]
Setting $C_{\beta} = \max\{ 1/\bP\big(|Y| > x_0\big), 2\alpha/(\beta-\alpha)\}$ one obtains
both (\ref{ajfolkvar}) and (\ref{ajfolkexp}).

To get (\ref{ajfolktailexp}) we proceed similarly. First, by Karamata's Theorem
\[ \bE\big[|Y| \GI \big(|Y| > x \big) \big]= \int_x^{\infty} \bP\big(|Y| > t\big)\,dt + x \bP\big(|Y| > x\big) \sim \frac{\alpha}{\alpha - 1} x^{1-\alpha} h(x),\]
Hence for some $x_0$ we have
\[\bE |Y| \GI \big(|Y| > x \big) \leq  \frac{2 \alpha}{\alpha - 1} x^{1-\alpha} h(x), \ x > x_0.
\]
Since $\alpha > 1$, we have  $\bE \big[|Y|\big] < +\infty$ and  (\ref{ajfolktailexp}) follows.
\end{proof}

\noindent {\em Proof of Proposition \ref{Prop0}} We begin with specifying the conditions of the Kolmogorov Three Series Theorem in terms of our linear sequences. We have
\begin{equation}\label{ajcondone}
\sum_{j\in\GZ} \bP \big(| c_{j} Y_j | > 1\big)  = \sum_{j\in \GZ} (\frac{1}{|c_{j}|})^{-\alpha} h(|c_{j}|^{-1})  = \sum_{j\in\GZ} |c_{j}|^{\alpha}  h(|c_{j}|^{-1}).
\end{equation}
Applying (\ref{ajfolkvar}) we obtain
\begin{equation}\label{ajcondtwo}
\begin{split}
\sum_{j\in\GZ} \bV\text{ar} \big((c_{j} Y_j) \GI\big(|c_{j} Y_j| \leq 1\big)\big)
&\leq \sum_{j\in\GZ} \bE \big[\big(c_{j} Y_j\big)^2 \GI\big(|c_{j} Y_j| \leq 1\big)\big] \\
&=  \sum_{j\in\GZ} |c_{j}|^2 \bE \big[Y_j^2 I(|Y_j| \leq 1/|c_{j}|)\big]  \\
&\leq C_2  \sum_{j\in\GZ} |c_{j}|^2  (1/|c_{j}|)^{2-\alpha} h(|c_{j}|^{-1})  \\
&= C_2  \sum_{j\in\GZ} |c_{j}|^{\alpha} h(|c_{j}|^{-1}).
\end{split}
\end{equation}
Similarly, if $\alpha\in (0,1)$, then by  (\ref{ajfolkexp})
\begin{equation}\label{ajcondthreea}
\begin{split}
\sum_{j\in\GZ} \big| \bE \big[c_{j} Y_j \GI \big(|c_{j} Y_j| \leq 1\big)\big] \big| &\leq
\sum_{j\in\GZ} |c_{j}| \bE \big[|Y_j| \GI \big(|Y_j| \leq 1/|c_{j}|\big)\big] \\
&\leq C_1 \sum_{j\in\GZ} |c_{j}| (1/|c_{j}|)^{1-\alpha} h(|c_{j}|^{-1}) \\
& = C_1 \sum_{j\in\GZ} |c_{j}|^{\alpha} h(|c_{j}|^{-1}) .
\end{split}
\end{equation}
If $\alpha = 1$, then by the  symmetry we have $\bE \big[Y_j \GI\big(|Y_j| \leq a\big)\big] = 0$, $a > 0$, and the series of truncated  expectations trivially vanishes
\begin{equation}\label{ajcondthreeb}
\sum_{j\in\GZ} \bE \big[c_{j} Y_j \GI\big(|c_{j} Y_j| \leq 1\big)\big] = 0.
\end{equation}
For $\alpha \in (1,2)$  we have $\bE \big[X_j\big] = 0$ and by (\ref{ajfolktailexp})
\begin{equation}\label{ajcondthreec}
\begin{split}
\sum_{j\in\GZ} | \bE \big[c_{j} Y_j \GI \big(|c_{j} Y_j| \leq 1\big)\big] | &= \sum_{j\in\GZ}
 | \bE \big[c_{j} Y_j \GI \big(|c_{j} Y_j| > 1\big)\big] |\\
 &\leq
\sum_{j\in\GZ} |c_{j}| \bE \big[|Y_j| \GI \big(|Y_j| > 1/|c_{j}|\big) \big]\\
&\leq \bE \big[|Y|\big] \max_{j\in\GZ} |c_{j}|  \# \{ j\,;\, |c_{j}| \geq 1/x_0\} \\
&\quad +
\frac{2 \alpha}{\alpha - 1} \sum_{j\in\GZ} |c_{j}|  (1/|c_{j}|)^{1-\alpha} h(|c_{j}|^{-1})
\end{split}
\end{equation}
By (\ref{ajcondone}) - (\ref{ajcondthreec}) we obtain that $\sum_{j\in \GZ} |c_{j}|^{\alpha} h(|c_{j}|^{-1}) < +\infty$ if, and only if, all the assumptions of the Three Series Theorem are satisfied. Hence
$ \sum_{j\in\GZ} c_{j} Y_j$ is a.s. convergent if, and only if, (\ref{aje2a0}) holds.\\[2mm]

\noindent {\em Proof of Proposition \ref{Prop1}} By Proposition \ref{Prop0} all random variables
 $V_n = \sum_{j\in\GZ} c_{n,j} Y_j$ are well-defined.
%In fact,  relations  (\ref{ajcondone}) - (\ref{ajcondthreec}) give much more.
Let us consider a decomposition of each $V_n$ into a sum of another three (convergent!) series:
\[\begin{split}
 V_n = & \sum_{j\in\GZ} \Big(c_{n,j} Y_j I(|c_{n,j} Y_j| \leq 1) - \bE \big[c_{n,j} Y_j I(|c_{n,j} Y_j| \leq 1)\big]\Big) \\
& + \sum_{j\in\GZ} \bE \big[c_{n,j} Y_j I(|c_{n,j} Y_j| \leq 1)\big] \\
& + \sum_{j\in\GZ} c_{n,j} Y_j I(|c_{n,j} Y_j| > 1) \\
= & V_{n,1} + V_{n,2} + V_{n,3}.
\end{split}\]
By (\ref{ajcondtwo}) we have
\[  \bV\text{ar} \big( V_{n,1}\big) \leq C_2  \sum_{j\in \GZ} |c_{n,j}|^{\alpha} h(|c_{n,j}|^{-1}) \to 0,\ \text{ as } n\to\infty, \]
if (\ref{ajseriestozero}) holds. Similarly $V_{n,2}\to 0$ by (\ref{ajcondthreea}) -  (\ref{ajcondthreec}). Finally, we have
\[
\begin{split}
\bP\big( V_{n,3} \neq 0) & \leq \bP\big(\bigcup_{j\in\GZ} \{|c_{n,j}Y_j| > 1\}\big) \\
& \leq \sum_{j\in\GZ} \bP\big(|c_{n,j}Y_j| > 1\big) \\
& = \sum_{j\in\GZ} |c_{n,j}|^{\alpha}  h(|c_{n,j}|^{-1}) \to 0 \ \text{ as $n \to \infty$.}
\end{split}
\]
We have proved the sufficiency part of Proposition \ref{Prop1}.

To prove the ``only if'' part, we show first that $V_n \inprob 0$ implies uniform infinitesimality of the coefficients, that is
\begin{equation}
\label{ajt2e2}
\sup_{j\in\GZ} |c_{n,j}| \to 0, \text{ as $n\to\infty$. }
\end{equation}
Let $\{\bar{Y}_j\}$ be an independent copy of $\{Y_j\}$.
If $\bar{V}_n = \sum_{j\in\GZ} c_{n,j}\bar{Y}_j$, then also $V_n -\bar{V}_n\inprob 0$ and these are series of {\em symmetric} random variables. For each $n$ select some
arbitrary $j_n\in\GZ$ and consider decomposition into {\em independent symmetric} random variables
\[V_n - \bar{V}_n = c_{n,j_n}(Y_{j_n} - \bar{Y}_{j_n}) +  \sum_{j\in\GZ, j\neq j_n} c_{n,j}(Y_j - \bar{Y}_j) = W_n + \widetilde{W}_n.\]
Since $\{V_n - \bar{V}_n\}_{n\in\GN}$ is uniformly tight, so is   $\{W_n\}_{n\in\GN}$ (it follows from the L\'evy-Ottaviani inequality, see e.g. Proposition 1.1.1 in \cite{KwWo92}).
Since the law of $Y_j - \bar{Y}_j$ is non-degenerate we obtain
\[ \sup_n |c_{n,j_n}| < +\infty.\]
If along some subsequence $n'$ we would have $c_{n',j_{n'}} \to c \neq 0$, then for some $\theta \in \GR^1$
\[ \lim_{n'\to\infty} \bE \big[e^{i\theta W_{n'}}\big] = |\bE \big[e^{i\theta c Y}\big]|^2 < 1.\]
It follows that also
\[ \lim_{n'\to\infty} \bE \big[e^{i\theta ( V_{n'} - \bar{V}_{n'})}\big] = \lim_{n'\to\infty} \bE \big[e^{i\theta W_{n'}}\big] \bE \big[e^{i\theta \widetilde{W}_{n'}}\big] < 1.\]
This is in contradiction with $ V_n - \bar{V}_n\inprob 0$. Hence $c = 0$, $c_{n,j_n} \to 0$ and since $j_n$ was chosen arbitrary, (\ref{ajt2e2}) follows.

Now let us choose $k_n$ such that both
\[ \sum_{|j| > k_n} c_{n,j} Y_j \inprob 0,\ \text{ as } n\to\infty,\]
and
\[ \sum_{|j| > k_n} \bP\big( |c_{n,j} Y_j| > 1\big) \to 0,  \ \text{ as } n\to\infty.\]
Then $\{ X_{n,j} = c_{n,j} Y_j\,;\, |j| \leq k_n, n\in\GN\}$ is an {\em infinitesimal}
array of row-wise independent random variables, with row sums convergent in probability to zero. Applying the general central limit theorem (see e.g. Theorem 5.15 in \cite{Kall02}) we obtain
\[ \sum_{|j|\leq k_n} \bP \big( |X_{n,j}| > 1\big) = \sum_{|j|\leq k_n} \bP \big( |c_{n,j} Y_j| > 1\big) = \sum_{|j|\leq k_n}  |c_{n,j}|^{\alpha}
 h(|c_{n,j}|^{-1}) \to 0.\]
This completes the proof of Proposition \ref{Prop1}.

\vspace{3mm}
\noindent{\em Acknowledgement.}
The authors would like to thank the anonymous referee for careful reading of the manuscript and comments which improved the paper in various aspects.

\noindent{\sc Authors' addresses:}\\[2mm]
Raluca Balan \\
 Department of Mathematics and Statistics\\ University of Ottawa,\\
585 King Edward Avenue, Ottawa, ON, K1N 6N5, Canada.\\
rbalan@uottawa.ca\\[2mm]
Adam Jakubowski\\
Faculty of Mathematics and Computer Science\\
Nicolaus Copernicus University, \\
Chopina 12/18, 87-100 Torun, Poland.\\
adjakubo@mat.umk.pl \\[2mm]
Sana Louhichi\\
Laboratoire Jean Kuntzmann\\
Institut de math\'ematiques appliqu\'ees de Grenoble,\\
51 rue des Mathématiques,
F-38041 Grenoble, Cedex 9, France.\\
sana.louhichi@imag.fr

\end{document}